\DeclareMathOperator{\lcm}{lcm}
\DeclareMathOperator{\End}{End}
\DeclareMathOperator{\ord}{ord}
\renewcommand{\phi}[0]{\varphi}
\renewcommand{\theta}[0]{\vartheta}
\renewcommand{\epsilon}[0]{\varepsilon}
\newcommand{\N}{\text{$\mathbf{N}$}}
\newcommand{\Z}{\text{$\mathbf{Z}$}}
\newcommand{\Q}{\text{$\mathbf{Q}$}}
\newcommand{\Pro}{\text{$\mathbf{P}^1$}}
\newcommand{\F}{\text{$\mathbf{F}$}}
\newtheorem{theorem}{Theorem}[section]
\newtheorem{lemma}[theorem]{Lemma}
\theoremstyle{definition}
\newtheorem{definition}[theorem]{Definition}
\newtheorem{example}[theorem]{Example}
\theoremstyle{remark}
\numberwithin{equation}{section}
\begin{document}

\bibliographystyle{amsplain}

\date{}

\keywords{Dynamical systems, finite fields, arithmetic dynamics, elliptic curves}
\title[]
{On the iterations of certain maps $x \mapsto k \cdot(x+x^{-1})$ \\ over finite fields of odd characteristic}

\author{S.~Ugolini}
\email{sugolini@gmail.com} 

\begin{abstract}
In this paper we describe the dynamics of certain rational maps of the form $k \cdot (x+x^{-1})$ over finite fields of odd characteristic. 
\end{abstract}

\maketitle
\section{Introduction}
The dynamics of rational maps over finite fields has drawn the attention of some investigators over the last years. Our first work in this area was \cite{SU2}, where we studied the iterations of the map $\theta(x) = x+x^{-1}$ over finite fields of characteristic two relying upon the relation between $\theta$ with an endomorphism over Koblitz curves. After this first work we attempted at a possible description of the dynamics of $\theta$ over finite fields of odd characteristic. In general it seems  that the behaviour of the map $\theta$ over finite fields of odd characteristic is chaotic. Notwithstanding, there are two remarkable exceptions. In \cite{SU35} we gave a complete description of the dynamics of $\theta$ over finite fields of characteristic three and five, being in characteristic three the map $\theta$ conjugated with the square map and in characteristic five related to an endomorphism of the elliptic curve with equation $y^2 = x^3+x$.

In this paper we address the problem of studying the iterations of certain rational maps which are obtained by a slight modification of the map $\theta$, namely maps of the form $k \cdot (x+x^{-1})$, where $k$ is a non-zero element of a prime field. 

If $p$ is an odd prime and $q$ is a $p$-power, then, for any $k \in \F_p^*$, we can define a map $\theta_k$ over the projective line $\Pro(\F_q) = \F_q \cup \{ \infty \}$ as follows:
\begin{displaymath}
\theta_k : x \mapsto 
\begin{cases}
\infty & \text{if $x \in \{0, \infty\}$,}\\
k \cdot (x + x^{-1}) & \text{otherwise}.
\end{cases}
\end{displaymath}

As in \cite{SU2} and \cite{SU35} it is possible to associate a directed graph $G_{\theta_k}^{q}$ with the map $\theta_k$ over the finite field $\F_q$.  More precisely, we can label each node of $G_{\theta_k}^q$ by an element of $\Pro(\F_q)$ and connect with an arrow the nodes $\alpha$ and $\beta$ if $\beta= \theta_k (\alpha)$. We say that an element $\gamma$ of $G_{\theta_k}^q$ is $\theta_k$-periodic if $\theta_k^l (\gamma) = \gamma$ for some positive integer $l$. Moreover, we notice that an element $\gamma$ which is not $\theta_k$-periodic is pre-periodic, since $\theta_k^s (\gamma)$ is periodic for some positive integer $s$. 

We can notice some properties of the digraph $G_{\theta_k}^q$:
\begin{itemize}
\item the indegree of a node $\beta$ of any $G_{\theta_k}^q$ can be $0$, $1$ or $2$. In fact, if $\beta \in \F_q$, then there exists $\alpha \in \F_q$ such that $\theta_k(\alpha) = \beta$ if and only if there exists a root $\alpha$ in $\F_q$ of the quadratic polynomial $p_k(x) = k x^2- \beta x + k$. In particular, we notice that the indegree of $\beta$ is $1$ exactly for $\beta = \pm 2k$, since the discriminant of $p_k$ is $\beta^2 - 4k^2$;
\item any connected component of $G_{\theta_k}^q$ is formed by a cycle, whose elements can be viewed as roots of reverse binary trees.
\end{itemize} 

Constructing some digraphs $G_{\theta_k}^q$ one can notice that their structure is not particularly symmetric. Nonetheless, they present remarkable symmetries when $k$ falls into one of the following three cases. 

\begin{enumerate}
\item \emph{Case 1:} $k \equiv \pm \frac{1}{2} \pmod{p}$.
\item \emph{Case 2:} $k$ is a root of the polynomial $x^2+\frac{1}{4} \in \F_p[x]$, being $p \equiv 1 \pmod{4}$;
\item \emph{Case 3:} $k$ is a root of the polynomial $x^2 + \frac{1}{2} x + \frac{1}{2} \in \F_p [x]$, being $p \equiv 1, 2,$ or $4 \pmod{7}$.
\end{enumerate}

The just mentioned maps generalize, for different reasons, our previous works \cite{SU2} and \cite{SU35}. More precisely, the map $x \mapsto x+x^{-1}$ over finite fields of characteristic $3$ belongs to the family of maps dealt with in case $1$. As regards the map $x \mapsto x+x^{-1}$ over finite fields of characteristic $5$, we notice that such a map belongs to the family of maps studied in case $2$. Finally, the maps studied in case $3$ appear in the definitions of the endomorphisms of degree $2$ of the elliptic curve of equation $y^2 = x^3-35x+98$ over $\F_p$, with $p \equiv 1, 2, 4 \pmod{7}$. The endomorphism ring of such an elliptic curve is isomorphic to $\Z \left[ \frac{1+ i \sqrt{7}}{2} \right]$. The same holds for the endomorphism ring of the Koblitz curve employed in the study of the iterations of the map $x \mapsto x+x^{-1}$ over finite fields of characteristic $2$. 

\section{Case 1: $k \equiv \pm \dfrac{1}{2} \pmod{p}$}
Let $\F_q$ be a finite field with $q$ elements, where $q=p^n$ for some odd prime $p$ and positive integer $n$. 

The iterations of the map $\theta_{\frac{1}{2}}$ over $\Pro (\F_q)$ can be studied relying upon the consideration that $\theta_{\frac{1}{2}}$ is conjugated to the square map. Indeed, 
\begin{equation}\label{case1_eq_1}
\theta_{\frac{1}{2}}  = \psi \circ s_2 \circ \psi, 
\end{equation}
where $s_2$ and $\psi$ are maps defined on $\Pro (\F_q)$ as follows:
\begin{displaymath}
s_2 (x) = 
\begin{cases}
x^2 & \text{if $x \in \F_q$,} \\
\infty & \text{if $x = \infty$,}
\end{cases}
\quad
\psi (x) =
\begin{cases}
\dfrac{x+1}{x-1} & \text{if $x \in \Pro (\F_q) \backslash \{1, \infty \}$,} \\
1 & \text{if $x = \infty$,}\\
\infty & \text{if $x=1$.}
\end{cases}
\end{displaymath}

As regards the map $\theta_{-\frac{1}{2}}$, the iterations of this latter map can be studied considering that
\begin{equation}\label{case1_eq_2}
\theta_{-\frac{1}{2}}  = \psi \circ s_{-2} \circ \psi, 
\end{equation}
where $s_{-2}$ is the map defined on $\Pro (\F_q)$ as follows:
\begin{displaymath}
s_{-2} (x) = 
\begin{cases}
x^{-2} & \text{if $x \in \F_q^*$,} \\
0 & \text{if $x = \infty$,} \\
\infty & \text{if $x = 0$.}
\end{cases}
\end{displaymath}

We state now a result about $\theta_{\frac{1}{2}}$-periodic and $\theta_{-\frac{1}{2}}$-periodic elements.

\begin{lemma}\label{case1_lem_0}
The following hold.
\begin{itemize}
\item The elements $1$ and $-1$ are $\theta_{\frac{1}{2}}$-periodic and form two cycles of length $1$ each.
\item The elements $1$ and $-1$ are $\theta_{- \frac{1}{2}}$-periodic and form a cycle of length $2$.
\item The element $\infty$ is $\theta_{\frac{1}{2}}$-periodic and $\theta_{-\frac{1}{2}}$-periodic of period $1$.
\item An element $\alpha \in \Pro(\F_{q}) \backslash \{-1, 1, \infty \}$ is $\theta_{\frac{1}{2}}$-periodic (resp. $\theta_{- \frac{1}{2}}$-periodic) of period $l$ if and only if $\psi (\alpha)$ is $s_2$-periodic (resp. $s_{-2}$-periodic) of period $l$. Moreover, the integer $l$ is equal to the multiplicative order $\ord_d (2)$ of $2$ (resp. $\ord_d (-2)$ of $-2$) in $(\Z / d \Z)^*$,  where $d$ is the odd multiplicative order of $\psi (\alpha)$ in $\F_{q}^*$. 
\end{itemize}
\end{lemma}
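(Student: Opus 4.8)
The plan is to exploit the conjugation identities \eqref{case1_eq_1} and \eqref{case1_eq_2} together with the observation that $\psi$ is an involution. First I would verify by direct computation that $\psi \circ \psi$ is the identity on $\Pro(\F_q)$, so that iterating \eqref{case1_eq_1} and \eqref{case1_eq_2} the inner copies of $\psi$ cancel and one obtains $\theta_{\frac{1}{2}}^l = \psi \circ s_2^l \circ \psi$ and $\theta_{-\frac{1}{2}}^l = \psi \circ s_{-2}^l \circ \psi$ for every positive integer $l$. This is the engine behind the whole statement.

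The first three bullets are then immediate checks. One computes $\theta_{\frac{1}{2}}(1) = 1$ and $\theta_{\frac{1}{2}}(-1) = -1$, giving two fixed points; $\theta_{-\frac{1}{2}}(1) = -1$ and $\theta_{-\frac{1}{2}}(-1) = 1$, giving a single $2$-cycle; and $\theta_{\pm\frac{1}{2}}(\infty) = \infty$ holds by definition of $\theta_k$.

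For the fourth bullet I would set $\beta = \psi(\alpha)$. Since $\psi$ is an involution with $\psi(1) = \infty$, $\psi(\infty) = 1$ and $\psi(-1) = 0$, it carries $\Pro(\F_q) \backslash \{-1, 1, \infty\}$ bijectively onto $\F_q^* \backslash \{1\}$, so $\beta$ is a well-defined nonzero field element distinct from $1$. Using $\theta_{\frac{1}{2}}^l = \psi \circ s_2^l \circ \psi$ and the involution property, $\theta_{\frac{1}{2}}^l(\alpha) = \alpha$ holds if and only if $s_2^l(\beta) = \beta$; since $\psi$ is a bijection the two orbits have equal length, so $\alpha$ is $\theta_{\frac{1}{2}}$-periodic of period $l$ exactly when $\beta$ is $s_2$-periodic of period $l$, and likewise for $\theta_{-\frac{1}{2}}$ versus $s_{-2}$. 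To pin down the period, write $d$ for the multiplicative order of $\beta$ in $\F_q^*$. The condition $s_2^l(\beta) = \beta$ reads $\beta^{2^l} = \beta$, i.e. $2^l \equiv 1 \pmod{d}$, whose least solution is $\ord_d(2)$; this requires $2$ to be invertible modulo $d$, that is $d$ odd, and conversely $2^l - 1$ being always odd forces any admissible $d$ to be odd, which justifies calling $d$ the odd order. The map $s_{-2}$ is handled identically: $\beta^{(-2)^l} = \beta$ gives $(-2)^l \equiv 1 \pmod{d}$ with least solution $\ord_d(-2)$, and $(-2)^l - 1$ odd again forces $d$ odd.

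The only real obstacle is the bookkeeping in this last step: one must check that the minimal $l$ realizing $s_2^l(\beta) = \beta$ is genuinely $\ord_d(2)$ and not merely a divisor of it, and confirm that the exclusion $\alpha \notin \{-1, 1, \infty\}$ is precisely what removes the degenerate values $\beta \in \{0, 1, \infty\}$ for which the order argument would break down. Everything else reduces to the involution identity for $\psi$ and elementary modular arithmetic, so I do not expect any genuine difficulty beyond careful translation between the two dynamical pictures.
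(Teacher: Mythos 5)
Your proof is correct and takes essentially the same route as the paper: the paper's own proof simply defers to Lemma 2.1 of \cite{SU35}, whose argument is precisely the conjugation identity $\theta_{\pm\frac{1}{2}} = \psi \circ s_{\pm 2} \circ \psi$ combined with the involution property of $\psi$ and the reduction of periodicity to the congruences $(\pm 2)^l \equiv 1 \pmod{d}$ that you carry out. Your write-up just makes explicit the details the paper outsources to that reference, including the direct checks on $1$, $-1$, $\infty$ and the observation that $d \mid (\pm 2)^l - 1$ forces $d$ to be odd.
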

\begin{proof}
The proof is the same as the proof of Lemma 2.1 in \cite{SU35} once one replaces all occurrences of $3^n$, $k$, $\theta$ with $p^n$, $l$ and $\theta_{\frac{1}{2}}$ (resp. $\theta_{-\frac{1}{2}}$).
\end{proof}

The forthcoming theorems furnish a description of the number and the length of the cycles of the graphs $G_{\theta_{\frac{1}{2}}}^{q}$ and $G_{\theta_{-\frac{1}{2}}}^{q}$ respectively.

\begin{theorem}\label{case1_thm_0}
Let $D = \left\{d_1, \dots, d_m \right\}$ be the set of the distinct odd integers greater than $1$ which divide $q-1$. Denote by $ord_{d_i} (2)$ the multiplicative order of $2$ in $(\Z / d_i \Z)^*$. Consider the set
\begin{displaymath}
L = \left\{ \ord_{d_i} (2) : 1 \leq i \leq m \right\} = \left\{l_1, \dots, l_r \right\}
\end{displaymath}
of cardinality $r$, where  $r \leq m$, and the map
\begin{eqnarray*}
l : D & \to & L\\
d_i & \mapsto & \ord_{d_i} (2).
\end{eqnarray*}
Then:
\begin{itemize}
\item $L \cap \{ 1 \} = \emptyset$;
\item the length of a cycle in $G_{\theta_{\frac{1}{2}}}^{q}$ is a positive integer belonging to $L \cup \{1 \}$;
\item in $G_{\theta_{\frac{1}{2}}}^q$ there are exactly three cycles of length $1$ formed respectively by $1, -1$ and $\infty$;
\item for any $k$ such that $1 \leq k \leq r$ there are
\begin{equation*}
c_k = \dfrac{1}{l_k} \cdot \sum_{d_i \in l^{-1} (l_k)}  \phi(d_i)
\end{equation*}
cycles of length $l_k$ in $G_{\theta_{\frac{1}{2}}}^{q}$, being $\phi$ the Euler's totient function;

\item the number of connected components of $G_{\theta_{\frac{1}{2}}}^{q}$   is 
\begin{equation*}
3 + \displaystyle\sum_{k=1}^r c_k.
\end{equation*}
\end{itemize}
\end{theorem}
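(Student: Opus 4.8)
The plan is to reduce everything to the squaring map $s_2$ by exploiting the conjugation \eqref{case1_eq_1}, namely $\theta_{\frac{1}{2}} = \psi \circ s_2 \circ \psi$. Since $\psi$ is an involution of $\Pro(\F_q)$, one has $\theta_{\frac{1}{2}}^l = \psi \circ s_2^l \circ \psi$ for every $l \geq 1$, so $\alpha$ lies on a $\theta_{\frac{1}{2}}$-cycle of length $l$ if and only if $\psi(\alpha)$ lies on an $s_2$-cycle of length $l$. As $\psi$ is a bijection, the assignment $\alpha \mapsto \psi(\alpha)$ sets up a length-preserving bijection between the cycles of $G_{\theta_{\frac{1}{2}}}^{q}$ and the cycles of the squaring map. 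It therefore suffices to enumerate the cycles of $s_2$ on $\Pro(\F_q)$ and then invoke the property, recorded in the Introduction, that each connected component of $G_{\theta_{\frac{1}{2}}}^{q}$ contains exactly one cycle, so that the number of components equals the number of cycles.

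Next I would describe the $s_2$-cycles. The points $0$, $1$ and $\infty$ are fixed by $s_2$, and under $\psi$ they correspond to $\psi(0) = -1$, $\psi(1) = \infty$, $\psi(\infty) = 1$, the three fixed points of $\theta_{\frac{1}{2}}$; this accounts for the three length-$1$ cycles. For $\beta \in \F_q^* \backslash \{1\}$ write $d$ for its multiplicative order. Then $\beta$ is $s_2$-periodic precisely when $\beta^{2^l} = \beta$ for some $l \geq 1$, i.e. $d \mid 2^l - 1$ for some $l$; since $2^l - 1$ is odd this forces $d$ odd, and conversely every $\beta$ of odd order is periodic. For such a $\beta$ the period is the least $l$ with $d \mid 2^l - 1$, which is exactly $\ord_d(2)$. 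In particular a periodic $\beta$ of order $d > 1$ gives cycles of length $\ord_d(2) \geq 2$, so no length-$1$ cycle arises from these points and $L \cap \{1\} = \emptyset$; moreover every cycle length is of the form $\ord_d(2)$ for some odd $d \mid q-1$ with $d > 1$, hence belongs to $L \cup \{1\}$.

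The final step is the count. For each odd divisor $d > 1$ of $q-1$ there are exactly $\phi(d)$ elements of $\F_q^*$ of order $d$, and each of them has the same period $\ord_d(2)$. Hence they split into $\phi(d)/\ord_d(2)$ disjoint cycles, the quotient being an integer because $\ord_d(2)$ divides $|(\Z/d\Z)^*| = \phi(d)$. Summing over all $d_i \in l^{-1}(l_k)$, which by definition are the odd divisors with $\ord_{d_i}(2) = l_k$, yields
\begin{equation*}
c_k = \sum_{d_i \in l^{-1}(l_k)} \frac{\phi(d_i)}{l_k} = \frac{1}{l_k} \sum_{d_i \in l^{-1}(l_k)} \phi(d_i)
\end{equation*}
cycles of length $l_k$. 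Adding the three length-$1$ cycles gives $3 + \sum_{k=1}^r c_k$ cycles in total, hence the same number of connected components.

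The main obstacle, though a mild one, is the bookkeeping in the last step: one must verify that all elements of a fixed odd order $d$ share the period $\ord_d(2)$ and that their number $\phi(d)$ is divisible by this period, so that the per-$d$ cycle counts are genuine integers before they are regrouped by the common value $l_k$. Everything else is a direct transport of structure along the involution $\psi$ together with the elementary description of the orbits of squaring on the odd-order subgroup of $\F_q^*$.
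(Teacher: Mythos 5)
Your proof is correct and takes essentially the same route as the paper: the paper's proof (which invokes Lemma \ref{case1_lem_0} and then adapts the argument of Theorem 2.2 of \cite{SU35}, replacing $-2$ by $2$) likewise transports the problem along the involution $\psi$ to the squaring map $s_2$ and counts cycles via the multiplicative orders $\ord_d(2)$ of the elements of odd order $d>1$ dividing $q-1$, adding the three fixed points $1,-1,\infty$ at the end. The only difference is that you write out explicitly the details the paper outsources to \cite{SU35}.
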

\begin{proof}
We notice that $L \cap \{ 1  \} = \emptyset$, since $\ord_{d_i} (2) \geq 2$, being $d_i \geq 3$ for any $i$.  According to Lemma \ref{case1_lem_0} the elements $1, -1$ and $\infty$ are $\theta_{\frac{1}{2}}$-periodic of period $1$. The proof of the remaining statements follows the same lines as the proof of Theorem 2.2 in \cite{SU35}, just replacing all occurrences of $3^n$, $-2$ and $\theta$ by $p^n$, $2$ and $\theta_{\frac{1}{2}}$ respectively. As regards the number of connected components of $G_{\theta_{\frac{1}{2}}}^q$, we notice that there are $\displaystyle\sum_{k=1}^r c_k$ connected components due to the cycles formed by the periodic elements of $\Pro (\F_{q}) \backslash \{-1, 1, \infty \}$ and $3$ more connected components due to the three cycles formed by $1, -1$ and $\infty$.  
\end{proof}

\begin{theorem}\label{case1_thm_0_1}
Let $D = \left\{d_1, \dots, d_m \right\}$ be the set of the distinct odd integers greater than $1$ which divide $q-1$. Denote by $ord_{d_i} (-2)$ the multiplicative order of $-2$ in $(\Z / d_i \Z)^*$. Consider the set
\begin{displaymath}
L = \left\{ \ord_{d_i} (-2) : 1 \leq i \leq m \right\} = \left\{l_1, \dots, l_r \right\}
\end{displaymath}
of cardinality $r$, where  $r \leq m$, and the map
\begin{eqnarray*}
l : D & \to & L\\
d_i & \mapsto & \ord_{d_i} (-2).
\end{eqnarray*}
Then:
\begin{itemize}
\item $L \cap \{2 \} = \emptyset$;
\item the length of a cycle in $G_{\theta_{-\frac{1}{2}}}^{q}$ is a positive integer belonging to $L \cup \{1, 2 \}$;
\item the number of cycles of length $1$ in $G_{\theta_{-\frac{1}{2}}}^{q}$ is 
\begin{displaymath}
\begin{cases}
3, & \text{if $p \equiv 1 \pmod{3}$ or $2 \mid n$ and $p \not =3$};\\
1, & \text{otherwise;}
\end{cases}
\end{displaymath}
\item in $G_{\theta_{-\frac{1}{2}}}^q$ there is exactly $1$ cycle of length $2$ formed by $1$ and $-1$; 
\item for any integer $k$ such that $1 \leq k \leq r$ there are
\begin{equation*}
c_k = \dfrac{1}{l_k} \cdot \sum_{d_i \in l^{-1} (l_k)}  \phi(d_i)
\end{equation*}
cycles of length $l_k$ in $G_{\theta_{-\frac{1}{2}}}^{q}$ formed by elements of $\Pro (\F_q) \backslash \{ -1, 1, \infty \}$, being $\phi$ the Euler's totient function;
\item the number of connected components of $G_{\theta_{-\frac{1}{2}}}^{q}$ is 
\begin{displaymath}
2 + \displaystyle\sum_{k=1}^r c_k.
\end{displaymath}
\end{itemize}
\end{theorem}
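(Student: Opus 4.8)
The approach parallels Theorem \ref{case1_thm_0}: I would transport the dynamics of $\theta_{-\frac{1}{2}}$ to that of $s_{-2}$ on $\F_q^*$ through the involution $\psi$ of \eqref{case1_eq_2}, reading off periods from Lemma \ref{case1_lem_0}. The exceptional points are immediate from that lemma: $\infty$ is fixed and $\{1,-1\}$ forms a cycle of length $2$, contributing one cycle of length $1$ and one of length $2$. For $L\cap\{2\}=\emptyset$ I argue directly: $\ord_{d_i}(-2)=2$ would force $d_i\mid(-2)^2-1=3$, hence $d_i=3$ (as $d_i$ is odd and $\ge 3$), but $-2\equiv 1\pmod 3$ gives $\ord_3(-2)=1$, a contradiction. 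Since by Lemma \ref{case1_lem_0} every periodic $\alpha\in\Pro(\F_q)\setminus\{-1,1,\infty\}$ has period $\ord_d(-2)\in L$ for the odd order $d>1$ of $\psi(\alpha)$, every cycle length lies in $L\cup\{1,2\}$, and the vanishing of $L\cap\{2\}$ makes $\{1,-1\}$ the unique cycle of length $2$.

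The count $c_k$ goes through as in Theorem \ref{case1_thm_0}. Under $\psi$ the $s_{-2}$-periodic elements correspond to the elements of $\F_q^*$ of odd order; one of order exactly $d$ lies in an orbit $\{x^{(-2)^j}\}_j$ of length $\ord_d(-2)$, so the $\phi(d)$ elements of order $d$ split into $\phi(d)/\ord_d(-2)$ cycles. Summing over the odd divisors $d>1$ of $q-1$ with $\ord_d(-2)=l_k$ gives $c_k=\frac{1}{l_k}\sum_{d\in l^{-1}(l_k)}\phi(d)$ cycles of length $l_k$, all formed by elements of $\Pro(\F_q)\setminus\{-1,1,\infty\}$, since $\psi$ carries this set to $\{0,\infty,1\}$, which elements of order $d>1$ avoid. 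As every connected component contains exactly one cycle, the number of components equals the total number of cycles, namely $2+\sum_{k=1}^r c_k$, the summand $2$ counting the cycles $\{\infty\}$ and $\{1,-1\}$.

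The only subtle point, and the main obstacle, is the number of cycles of length $1$. Apart from $\infty$, a fixed point $\alpha\in\F_q^*$ satisfies $\theta_{-\frac{1}{2}}(\alpha)=\alpha$, i.e.\ $3\alpha^2=-1$; equivalently, by Lemma \ref{case1_lem_0}, $\psi(\alpha)$ has order exactly $3$ (the case $d=3$, where $\ord_3(-2)=1$), so these are precisely the $l_k=1$ contribution to $\sum_k c_k$. There are $\phi(3)=2$ of them when $3\mid q-1$ and none otherwise, giving $3$ or $1$ cycles of length $1$ in total. It remains to recast $3\mid q-1=p^n-1$ as the stated congruences, which I would do by cases: for $p=3$ the equation $3\alpha^2=-1$ reads $0=-1$ and has no solution; for $p\ne 3$ solvability is equivalent to $-3$ being a square in $\F_q$, which holds automatically when $2\mid n$ (every element of $\F_p$ is a square in $\F_{p^2}$), and for odd $n$ holds iff $-3$ is a square in $\F_p$, i.e.\ iff $p\equiv 1\pmod 3$, the classical criterion for $\F_p$ to contain a primitive cube root of unity. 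Collecting the cases produces exactly the dichotomy ``$p\equiv 1\pmod 3$ or ($2\mid n$ and $p\ne 3$)'' against its complement, completing the proof.
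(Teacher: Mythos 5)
Your proof is correct and takes essentially the same route as the paper: transport through the conjugation $\psi$ to the map $s_{-2}$, the identical argument that $2\notin L$ (forcing $d=3$, where $\ord_3(-2)=1$), the identical fixed-point computation $3x^2=-1$ together with the quadratic-residue criterion for $-3$ in $\F_{p^n}$, and the count of components as cycles. The only difference is one of self-containedness: you spell out the orbit-counting argument ($\phi(d)$ elements of odd order $d$ splitting into $\phi(d)/\ord_d(-2)$ cycles), which the paper delegates to the proof of Theorem 2.2 of \cite{SU35}.
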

\begin{proof}
At first we notice that $2 \not \in L$. In fact, if $\ord_d (-2) =2$ for some odd integer $d > 1$, then $(-2)^2 \equiv 1 \pmod{d}$, which is possible only if $d =3$. Since $\ord_3 (-2) =1$, we conclude that $2 \not \in L$.

As regards the $\theta_{-\frac{1}{2}}$-periodic elements of period $1$, we notice that  $\infty$ is $\theta_{-\frac{1}{2}}$-periodic of period $1$ by definition. Moreover, an element $x \in \F_q^{*}$ is $\theta_{-\frac{1}{2}}$-periodic of period $1$ if and only if $-\frac{1}{2} (x+x^{-1}) = x$, namely if and only if $x^{-1} (3 x^2+1) = 0$. Therefore, $x \in \F_q^*$ is $\theta_{-\frac{1}{2}}$-periodic of period $1$ if and only if $p \not =3$ and $x$ is a square root of $- \frac{1}{3}$. Since $-\frac{1}{3}$ is a square in $\F_q^*$ if and only if $p \equiv 1 \pmod{3}$ or $n$ is even and $p \not = 3$, the thesis follows. Moreover, being $2 \not \in L$, in $G^{q}_{\theta_{-\frac{1}{2}}}$ there is exactly one cycle of length $2$ formed by $1$ and $-1$.

All the other assertions regarding the elements of $\Pro (\F_q) \backslash \{-1, 1, \infty \}$ can be proved as in \cite{SU35}, Theorem 2.2, just replacing all occurrences of $3^n$ and $\theta$ by $p^n$ and $\theta_{-\frac{1}{2}}$ respectively. The number of connected components of $G_{\theta_{-\frac{1}{2}}}^{q}$ is equal to the sum of the number of cycles formed by the periodic elements of $\Pro (\F_q) \backslash \{ -1, 1, \infty \}$, the cycle formed by $1$ and $-1$ and the cycle formed by $\infty$.
\end{proof}

The following lemma generalizes Lemma 2.3 of \cite{SU35} and its proof is the same, after the replacement of any occurrence of $3^n$, $s$ and $\theta$ by $p^n$, $s_{-2}$ and $\theta_{-\frac{1}{2}}$ respectively.

\begin{lemma}\label{case1_lem_1}
Let $2^e$, for some positive integer $e$, be the greatest power of $2$ dividing $q-1$.
Let $\gamma \in \F_{q}$ be a non-$\theta_{-\frac{1}{2}}$-periodic element (in particular $\gamma \not \in \{1, -1 \}$). Then, $\theta_{-\frac{1}{2}} (x) = \gamma$ for exactly two distinct elements $x \in \F_{q}$, provided that $\ord(\psi (\gamma)) \not \equiv 0 \pmod{2^e}$, where $\ord(\psi (\gamma))$ is the multiplicative order of $\psi (\gamma)$ in $\F_{q}^*$. If, on the contrary, $ \ord ( \psi (\gamma)) \equiv 0 \pmod {2^e}$, then there is no $x \in \F_{q}$ such that $\theta_{-\frac{1}{2}} (x) = \gamma$.
\end{lemma}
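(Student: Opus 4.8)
The plan is to exploit the conjugation relation \eqref{case1_eq_2}, namely $\theta_{-\frac{1}{2}} = \psi \circ s_{-2} \circ \psi$, together with the fact that $\psi$ is an involution on $\Pro(\F_q)$ (a direct computation gives $\psi(\psi(x)) = x$). Applying $\psi$ to both sides, the equation $\theta_{-\frac{1}{2}}(x) = \gamma$ becomes equivalent to $s_{-2}(\psi(x)) = \psi(\gamma)$. Since $\gamma$ is non-$\theta_{-\frac{1}{2}}$-periodic we have in particular $\gamma \notin \{1,-1\}$, so $\psi(\gamma) = \frac{\gamma+1}{\gamma-1}$ is a well-defined element of $\F_q^*$ different from $1$. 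Writing $y = \psi(x)$ and using that $\psi$ restricts to a bijection from $\F_q$ onto $\Pro(\F_q)\setminus\{1\}$, the number of preimages of $\gamma$ under $\theta_{-\frac{1}{2}}$ lying in $\F_q$ equals the number of solutions $y$ of $s_{-2}(y) = \psi(\gamma)$.

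Because $\psi(\gamma)$ is a nonzero finite element, the values $y \in \{0,\infty\}$, which $s_{-2}$ sends to $\infty$ and $0$, cannot be solutions, so the equation reduces to $y \in \F_q^*$ with $y^{-2} = \psi(\gamma)$, i.e. $y^2 = \psi(\gamma)^{-1}$. Hence everything comes down to whether $\psi(\gamma)^{-1}$, equivalently $\psi(\gamma)$, is a square in $\F_q^*$: if it is, there are exactly two square roots $y_1 = -y_2$, and otherwise none. Each solution $y_i$ yields a preimage $x_i = \psi(y_i) \in \F_q$; the two preimages are distinct by injectivity of $\psi$, and each satisfies $x_i \neq \infty$ because $y_i \neq 1$ (as $y_i^2 = \psi(\gamma)^{-1} \neq 1$, using $\psi(\gamma) \neq 1$).

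The heart of the argument, and the step I expect to require the most care, is translating this quadratic-residue condition into the stated divisibility condition on $\ord(\psi(\gamma))$. I would write $q - 1 = 2^e u$ with $u$ odd (note $e \geq 1$ since $q$ is odd), and set $d = \ord(\psi(\gamma))$, so $d \mid q-1$. Using that $\F_q^*$ is cyclic, $\psi(\gamma)$ lies in the unique subgroup of squares, of order $\frac{q-1}{2} = 2^{e-1}u$, if and only if $d \mid 2^{e-1}u$. Writing $2^a$ for the exact power of $2$ dividing $d$, with $0 \le a \le e$, this divisibility holds precisely when $a \le e-1$, that is, when $2^e \nmid d$. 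Therefore $\psi(\gamma)$ is a square exactly when $\ord(\psi(\gamma)) \not\equiv 0 \pmod{2^e}$ and a non-square exactly when $\ord(\psi(\gamma)) \equiv 0 \pmod{2^e}$. Combining this with the count of the preceding paragraph gives two preimages in the former case and none in the latter, which is the claim.
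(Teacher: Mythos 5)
Your proof is correct and follows the route the paper itself intends: the paper's ``proof'' simply defers to Lemma 2.3 of \cite{SU35}, whose argument (as the surrounding setup via equation (\ref{case1_eq_2}) indicates) is exactly this conjugation of $\theta_{-\frac{1}{2}}$ by the involution $\psi$ to the map $s_{-2}$, reduction to counting square roots of $\psi(\gamma)^{-1}$, and translation of the quadratic-residue condition into the divisibility condition on $\ord(\psi(\gamma))$ in the cyclic group $\F_q^*$. Your write-up is a self-contained version of that same argument, with the boundary cases ($y\in\{0,1,\infty\}$, distinctness of the two preimages) handled correctly.
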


The adapted version of Lemma \ref{case1_lem_1} for the map $\theta_{\frac{1}{2}}$ reads as follows.

\begin{lemma}\label{case1_lem_1_1}
Let $2^e$, for some positive integer $e$, be the greatest power of $2$ dividing $q-1$.
Let $\gamma \in \F_{q}$ be a non-$\theta_{\frac{1}{2}}$-periodic element (in particular $\gamma \not \in \{1, -1 \}$). Then, $\theta_{\frac{1}{2}} (x) = \gamma$ for exactly two distinct elements $x \in \F_{q}$, provided that $\ord(\psi (\gamma)) \not \equiv 0 \pmod{2^e}$, where $\ord(\psi (\gamma))$ is the multiplicative order of $\psi (\gamma)$ in $\F_{q}^*$. If, on the contrary, $ \ord ( \psi (\gamma)) \equiv 0 \pmod {2^e}$, then there is no $x \in \F_{q}$ such that $\theta_{\frac{1}{2}} (x) = \gamma$.
\end{lemma}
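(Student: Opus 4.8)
The plan is to exploit the conjugacy relation \eqref{case1_eq_1}, namely $\theta_{\frac{1}{2}} = \psi \circ s_2 \circ \psi$, together with the fact that $\psi$ is an involution of $\Pro(\F_q)$ (one checks directly that $\psi \circ \psi$ is the identity). First I would rewrite the equation $\theta_{\frac{1}{2}}(x) = \gamma$: applying $\psi$ to both sides and using $\psi \circ \psi = \mathrm{id}$ turns it into $s_2(\psi(x)) = \psi(\gamma)$, that is, $(\psi(x))^2 = \psi(\gamma)$. Setting $\beta = \psi(\gamma)$ and $y = \psi(x)$, and using once more that $\psi$ is a bijection (so the correspondence $x \leftrightarrow y$ is a bijection with $x = \psi(y)$), the problem reduces to counting the $y \in \Pro(\F_q)$ with $y^2 = \beta$ and then transporting the solutions back through $\psi$.

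Next I would pin down $\beta$. Since $\gamma \in \F_q$ with $\gamma \neq 1$, the element $\beta = \frac{\gamma+1}{\gamma-1}$ lies in $\F_q$; since $\gamma \neq -1$ we have $\beta \neq 0$; and $\beta \neq 1$ automatically, because $\frac{\gamma+1}{\gamma-1} = 1$ is impossible in odd characteristic. Thus $\beta \in \F_q^*$ with $\beta \neq 1$. Consequently $\beta = \infty$ cannot occur and the square roots of $\beta$, if any, lie in $\F_q^*$; moreover any such root $y$ satisfies $y \neq 1$ (for $y = 1$ would force $\beta = 1$), so that $x = \psi(y) \in \F_q$ (the value $\infty$ would require $y = 1$). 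I would also note that, $\beta$ being nonzero and the characteristic odd, its two square roots $\pm y_0$ are distinct and are mapped by the injective $\psi$ to two distinct elements of $\F_q$.

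The counting of square roots is then the quadratic-residue dichotomy: $\beta$ is a square in $\F_q^*$ iff $\beta^{(q-1)/2} = 1$, in which case it has exactly two square roots, and otherwise it has none. Writing $q - 1 = 2^e t$ with $t$ odd, I would translate this criterion into the order condition stated in the lemma: $\beta^{(q-1)/2} = 1$ holds iff $\ord(\beta)$ divides $(q-1)/2 = 2^{e-1} t$, which happens iff the $2$-part of $\ord(\beta)$ is at most $2^{e-1}$, that is, iff $2^e \nmid \ord(\beta)$, equivalently $\ord(\psi(\gamma)) \not\equiv 0 \pmod{2^e}$. Combining the two halves: when $\ord(\psi(\gamma)) \not\equiv 0 \pmod{2^e}$ there are two square roots, hence two distinct preimages $x \in \F_q$, and when $\ord(\psi(\gamma)) \equiv 0 \pmod{2^e}$ there are none.

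The argument is essentially bookkeeping once the conjugacy is invoked, so no step is genuinely hard; the only point demanding care — and the place where the hypothesis $\gamma \notin \{1,-1\}$ is actually used — is the passage back and forth through $\psi$, where one must verify that the two roots of $y^2 = \beta$ are carried to two distinct elements of $\F_q$ rather than to $\infty$. This mirrors Lemma \ref{case1_lem_1}, with $s_{-2}$ replaced by $s_2$; the only change in the reduction is that here one solves $y^2 = \psi(\gamma)$ directly rather than $y^2 = \psi(\gamma)^{-1}$, and since $\ord(\psi(\gamma)^{-1}) = \ord(\psi(\gamma))$ the resulting order condition is identical.
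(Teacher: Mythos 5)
Your proof is correct and follows essentially the same route as the paper: the paper's own proof (after noting $x \notin \{-1,0,1\}$) defers to Lemma 2.3 of \cite{SU35}, whose argument is precisely the reduction you carry out — conjugating via \eqref{case1_eq_1} to the square map, counting square roots of $\psi(\gamma)$, and translating the quadratic-residue criterion into the condition $2^e \nmid \ord(\psi(\gamma))$. You have simply written out explicitly the details that the paper imports by reference, including the verification that the two square roots pull back under $\psi$ to two distinct elements of $\F_q$.
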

\begin{proof}
We observe that $\theta_{\frac{1}{2}} (-1) = -1$, $\theta_{\frac{1}{2}} (1) = 1$ and $\theta_{\frac{1}{2}} (0) = \infty$. Therefore, if $\theta_{\frac{1}{2}} (x) = \gamma$ as in the hypotheses, then $x \not \in \{-1, 0, 1 \}$. The remaining assertions can be proved as in the proof of Lemma 2.3 of \cite{SU35}, replacing any occurrence of $3^n$, $s$, $-2$ and $\theta$ by $p^n$, $s_{-2}$, $2$ and $\theta_{\frac{1}{2}}$ respectively.
\end{proof}

The following theorem is a generalization of Theorem 2.4 of \cite{SU35}.

\begin{theorem}\label{case1_thm_3}
Let $\alpha \in  \Pro (\F_{q})$ be a $\theta_{\frac{1}{2}}$-periodic (resp. $\theta_{-\frac{1}{2}}$-periodic) element. If $\alpha \in \{-1, 1\}$, then $\alpha$ is not root of any tree in $G_{\theta_{\frac{1}{2}}}^q$ (resp. $G_{\theta_{-\frac{1}{2}}}^q$). If $\alpha \not \in \{1, -1 \}$, then $\alpha$ is the root of a reversed binary tree of depth $e$ in $G_{\theta_{\frac{1}{2}}}^q$ (resp. $G_{\theta_{-\frac{1}{2}}}^q$), where $2^e$ is the greatest power of $2$ which divides $q-1$. In particular:
\begin{itemize}
\item there are $2^{k-1}$ vertices at any level $1 \leq k \leq e$ of the tree;
\item the root has one child and all the other vertices at any level $0 < k < e$ have two children;
\item if $\beta \in \F_q$ belongs to the level $k > 0$ of the tree rooted at $\alpha$, then $2^k$ is the greatest power of $2$ dividing $\ord(\psi(\beta))$. 
\end{itemize}
\end{theorem}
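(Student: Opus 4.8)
The plan is to push everything through the conjugacy relations \eqref{case1_eq_1} and \eqref{case1_eq_2}. Since $\psi$ is an involution (one checks $\psi \circ \psi = \mathrm{id}$ on $\Pro(\F_q)$), it is a bijection of $\Pro(\F_q)$ that intertwines $\theta_{\frac12}$ with the squaring map $s_2$ (resp.\ $\theta_{-\frac12}$ with $s_{-2}$). Consequently $\psi$ induces an isomorphism of directed graphs between $G_{\theta_{\frac12}}^q$ and the digraph of $s_2$ (resp.\ $G_{\theta_{-\frac12}}^q$ and the digraph of $s_{-2}$), carrying the reversed binary tree rooted at $\alpha$ onto the one rooted at $\psi(\alpha)$, level by level. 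Because every assertion of the theorem about a node $\beta$ is already phrased in terms of $\psi(\beta)$, it suffices to prove the corresponding statements for $s_2$ and $s_{-2}$ acting on the cyclic group $\F_q^* = \langle g \rangle$ of order $q-1 = 2^e u$ with $u$ odd, where preimages are governed purely by multiplicative orders.

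For the two exceptional roots I would first record $\psi(1) = \infty$ and $\psi(-1) = 0$. Under $s_2$ both $\infty$ and $0$ are fixed and their only $s_2$-preimage is themselves; under $s_{-2}$ they form the $2$-cycle $\{0,\infty\}$ whose only preimages again lie on the cycle. In either case no off-cycle vertex maps into $\{0,\infty\}$, so $\infty$ and $0$ root no tree, and transporting back by $\psi$ shows $1$ and $-1$ root no tree, as claimed.

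For $\alpha \notin \{1,-1\}$, Lemma \ref{case1_lem_0} tells us $\psi(\alpha) \in \F_q^*$ has odd multiplicative order $d$. The counting of children is then exactly the content of Lemmas \ref{case1_lem_1} and \ref{case1_lem_1_1}: a non-periodic vertex $\gamma$ has two $s_{\pm2}$-preimages when $2^e \nmid \ord(\psi(\gamma))$ and none when $2^e \mid \ord(\psi(\gamma))$. The remaining work is to track the $2$-adic valuation of the order along preimages. Writing $v_2$ for this valuation and using that in a cyclic group an element of order $2^k d$ with $k < e$ has exactly two square roots, both of order $2^{k+1}d$ (since inversion preserves order, the same holds for $s_{-2}$), I would prove by induction on the level that every vertex at level $k$ has order $2^k d$; hence $2^{k-1}$ vertices occur at each level $1 \le k \le e$, vertices of order $2^e d$ are leaves giving depth exactly $e$, and $2^k \parallel \ord(\psi(\beta))$ for $\beta$ at level $k$.

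The subtle point, and the one I expect to cost the most care, is the asymmetry at the root: the periodic vertex $\psi(\alpha)$ of odd order $d$ has two square roots, but these split into one of odd order $d$ and one of order $2d$, the former being itself periodic and hence an edge of the cycle rather than of the tree. Establishing this split --- that precisely one square root picks up the factor $2$ coming from the order-$2$ element $-1$ --- is what yields ``the root has one child'' while every interior vertex has two, and it is the only place where the cycle/tree dichotomy genuinely intervenes. Once this is in hand, the inductive count runs uniformly for both $s_2$ and $s_{-2}$, and transport by $\psi$ completes the proof.
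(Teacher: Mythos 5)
Your proposal is correct and follows essentially the same route as the paper: the paper's proof handles $\pm 1$ by a direct discriminant computation and defers everything else to Theorem 2.4 of \cite{SU35}, whose argument is precisely the one you spell out --- transport through the involution $\psi$ to the maps $s_{\pm 2}$, apply Lemmas \ref{case1_lem_1} and \ref{case1_lem_1_1} for the child counts, and track the $2$-adic valuation of multiplicative orders level by level, with the root's asymmetry coming from the odd-order/even-order split of the two square roots of an odd-order element. Your version is simply a self-contained rendering of what the paper outsources to the earlier reference (including the correct observation that $\psi(1)=\infty$, $\psi(-1)=0$ reduces the exceptional cases to the fixed points of $s_{\pm 2}$), so there is nothing to object to.
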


\begin{proof}
Firstly we notice that no tree grows on $1$ and $-1$, since
\begin{displaymath}
\begin{array}{ccc}
\theta_{\pm \frac{1}{2}} (x) =  1 & \Leftrightarrow x^2 \mp 2x+1= 0 & \Leftrightarrow (x \mp 1)^2 = 0;\\
\theta_{\pm \frac{1}{2}} (x) =  - 1 & \Leftrightarrow x^2 \pm 2x+1= 0 & \Leftrightarrow (x \pm 1)^2 = 0.
\end{array}
\end{displaymath}  

All other assertions can be proved as in the proof of Theorem 2.4 of \cite{SU35} replacing all occurrences of $3^n$, $\theta$ and $s$ by $p^n$, $\theta_{\frac{1}{2}}$ (resp. $\theta_{-\frac{1}{2}}$) and $s_2$ (resp. $s_{-2}$) respectively.
\end{proof}

\begin{example}
Hereafter the graph $G_{\theta_{-\frac{1}{2}}}^{29}$ is represented. The vertex numbering refers to the exponents $k$ of the powers $\alpha^k$, where $\alpha$ is the root of the Conway polynomial $x-2 \in \F_{29} [x]$.
\begin{center}
\begin{picture}(90, 65)(-25,-35)
	\unitlength=2.8pt
    \gasset{Nw=3.6,Nh=3.6,Nmr=1.8,curvedepth=-1.5}
    \thinlines
    \footnotesize
    \node(N1)(10,0){$22$}
    \node(N2)(5,8.66){$10$}
    \node(N3)(-5,8.66){$11$}
    \node(N4)(-10,0){$8$}
    \node(N5)(-5,-8.66){$24$}
    \node(N6)(5,-8.66){$25$}
    
    \node(N11)(20,0){$3$}
    \node(N12)(10,17.32){$6$}
    \node(N13)(-10,17.32){$18$}
    \node(N14)(-20,0){$17$}
    \node(N15)(-10,-17.32){$20$}
    \node(N16)(10,-17.32){$4$}

 	\node(N21)(28.97,7.76){$5$}
    \node(N22)(21.21,21.21){$27$}
    \node(N23)(7.76,28.97){$1$}
    \node(N24)(-7.76,28.97){$16$}
    \node(N25)(-21.21,21.21){$12$}
    \node(N26)(-28.97,7.76){$19$}
    \node(N27)(-28.97,-7.76){$9$}
    \node(N28)(-21.21,-21.21){$13$}
    \node(N29)(-7.76,-28.97){$15$}
    \node(N30)(7.76,-28.97){$2$}
    \node(N31)(21.21,-21.21){$26$}
    \node(N32)(28.97,-7.76){$23$}

    \drawedge(N1,N2){}
    \drawedge(N2,N3){}
    \drawedge(N3,N4){}
    \drawedge(N4,N5){}
    \drawedge(N5,N6){}
    \drawedge(N6,N1){}

    \gasset{curvedepth=0}
     
    \drawedge(N11,N1){}
    \drawedge(N12,N2){}
    \drawedge(N13,N3){}
    \drawedge(N14,N4){}
    \drawedge(N15,N5){}
    \drawedge(N16,N6){}
    
    \drawedge(N21,N11){}
    \drawedge(N22,N12){}
    \drawedge(N23,N12){}
    \drawedge(N24,N13){}
    \drawedge(N25,N13){}
    \drawedge(N26,N14){}
    \drawedge(N27,N14){}
    \drawedge(N28,N15){}
    \drawedge(N29,N15){}
    \drawedge(N30,N16){}
    \drawedge(N31,N16){}
    \drawedge(N32,N11){}
    
    \node(NN1)(45,0){$\infty$}
    \node(NN2)(45,10){`0'}
    \node(NN3)(40,20){$7$}
    \node(NN5)(50,20){$21$}
    
    \node(NN4)(60,0){$0$}
    \node(NN6)(70,0){$14$}
   
    \drawedge(NN5,NN2){}
    \drawedge(NN2,NN1){}
 
    \drawedge(NN3,NN2){}
    \gasset{curvedepth=1}
       \drawedge(NN4,NN6){}
        \drawedge(NN6,NN4){}
    \drawloop[loopangle=-90](NN1){}
\end{picture}
\end{center}

We notice that $q=29$ and $q-1 = 2^2 \cdot 7$. According to the notation of Theorem \ref{case1_thm_0}, $D = \{ 7 \}$ and $\ord_7 (-2) = 6$. This implies that there exists $\dfrac{1}{6} \cdot \phi(7) = 1$ cycle of length $6$. According to the same theorem the elements $1$ and $-1$ (which here are denoted by $0$ and $14$) form a cycle of length $2$, while $\infty$ forms a cycle of length $1$. Moreover, being $e = 2$ according to the notation of Theorem \ref{case1_thm_3}, any cyclic element different from $1$ and $-1$ is root of a binary tree of depth $2$.

\end{example}

\begin{example}
In this example the graph of $G_{\theta_{-\frac{1}{2}}}^{7^2}$ is represented. The vertex numbering refers to the exponents $k$ of the powers $\alpha^k$, where $\alpha$ is a root of the Conway polynomial $x^2 - 2 x + 3 \in \F_7 [x]$.
 \begin{center}
    \unitlength=2.8pt
    \begin{picture}(125, 45)(5,-0)
    \gasset{Nw=4,Nh=4,Nmr=3,curvedepth=0}
    \thinlines
   \footnotesize
    
    \node(B1)(36,8){8}
    
    \node(C1)(36,16){40}
    
    \node(D1)(20,24){30}
    \node(D2)(52,24){18}
              
    \node(E1)(12,32){7}
   	\node(E2)(28,32){41}
    \node(E3)(44,32){47}
    \node(E4)(60,32){1}

	\node(F1)(8,40){13}
    \node(F2)(16,40){35}
    \node(F3)(24,40){10}          
    \node(F4)(32,40){38}
    \node(F5)(40,40){22}
    \node(F6)(48,40){26}
    \node(F7)(56,40){5}
    \node(F8)(64,40){43}
    
    \node(BB1)(100,8){32}
    
    \node(CC1)(100,16){16}
    
    \node(DD1)(84,24){42}
    \node(DD2)(116,24){6}
              
    \node(EE1)(76,32){23}
   	\node(EE2)(92,32){25}
    \node(EE3)(108,32){31}
    \node(EE4)(124,32){17}

	\node(FF1)(72,40){2}
    \node(FF2)(80,40){46}
    \node(FF3)(88,40){19}          
    \node(FF4)(96,40){29}
    \node(FF5)(104,40){11}
    \node(FF6)(112,40){37}
    \node(FF7)(120,40){14}
    \node(FF8)(128,40){34}

	\drawedge(F1,E1){}	
    \drawedge(F2,E1){}
    \drawedge(F3,E2){}
    \drawedge(F4,E2){}
    \drawedge(F5,E3){}
    \drawedge(F6,E3){}
    \drawedge(F7,E4){}
    \drawedge(F8,E4){}
    \drawedge(E1,D1){}
    \drawedge(E2,D1){}
    \drawedge(E3,D2){}
    \drawedge(E4,D2){}
    \drawedge(D1,C1){}
    \drawedge(D2,C1){}
    \drawedge(C1,B1){}
    \drawloop[loopangle=-90](B1){} 
    
    	\drawedge(FF1,EE1){}	
    \drawedge(FF2,EE1){}
    \drawedge(FF3,EE2){}
    \drawedge(FF4,EE2){}
    \drawedge(FF5,EE3){}
    \drawedge(FF6,EE3){}
    \drawedge(FF7,EE4){}
    \drawedge(FF8,EE4){}
    \drawedge(EE1,DD1){}
    \drawedge(EE2,DD1){}
    \drawedge(EE3,DD2){}
    \drawedge(EE4,DD2){}
    \drawedge(DD1,CC1){}
    \drawedge(DD2,CC1){}
    \drawedge(CC1,BB1){}
    \drawloop[loopangle=-90](BB1){} 
    \end{picture}
  \end{center}
  
   \begin{center}
    \unitlength=2.8pt
    \begin{picture}(80, 55)(5,-5)
    \gasset{Nw=4,Nh=4,Nmr=3,curvedepth=0}
    \thinlines
   \footnotesize
    
    \node(B1)(36,8){$\infty$}
    
    \node(C1)(36,16){`0'}
    
    \node(D1)(20,24){36}
    \node(D2)(52,24){12}
              
    \node(E1)(12,32){4}
   	\node(E2)(28,32){44}
    \node(E3)(44,32){20}
    \node(E4)(60,32){28}

	\node(F1)(8,40){3}
    \node(F2)(16,40){45}
    \node(F3)(24,40){15}          
    \node(F4)(32,40){33}
    \node(F5)(40,40){9}
    \node(F6)(48,40){39}
    \node(F7)(56,40){21}
    \node(F8)(64,40){27}

	\drawedge(F1,E1){}	
    \drawedge(F2,E1){}
    \drawedge(F3,E2){}
    \drawedge(F4,E2){}
    \drawedge(F5,E3){}
    \drawedge(F6,E3){}
    \drawedge(F7,E4){}
    \drawedge(F8,E4){}
    \drawedge(E1,D1){}
    \drawedge(E2,D1){}
    \drawedge(E3,D2){}
    \drawedge(E4,D2){}
    \drawedge(D1,C1){}
    \drawedge(D2,C1){}
    \drawedge(C1,B1){}
    \drawloop[loopangle=-90](B1){} 
    
    \node(BB1)(72,8){0}
    
    \node(CC1)(80,8){24}
    
    \gasset{curvedepth=1.0}
    \drawedge(BB1,CC1){}
    \drawedge(CC1,BB1){}
    \end{picture}
  \end{center}
We note that $q=7^2$ and $q-1 = 2^4 \cdot 3$. According to the notation of Theorem \ref{case1_thm_0}, $D= \{ 3 \}$ and $\ord_3 (-2) = 1$. This implies that there exist $\dfrac{1}{1} \cdot \phi(3) = 2$ cycles of length $1$ due to the elements of order $3$ in $\F_{49}^*$. According to the same theorem the elements $1$ and $-1$ (which here are denoted by $0$ and $24$) form a cycle of length $2$, while $\infty$ forms a cycle of length $1$. Moreover, being $e = 4$ according to the notation of Theorem \ref{case1_thm_3}, any cyclic element different from $1$ and $-1$ is root of a binary tree of depth $4$.     
\end{example}

\section{Case 2: $k^2 \equiv - \frac{1}{4} \pmod{p}$ with $p \equiv 1 \pmod{4}$}\label{section_case_2}
The starting point for this and the following section is \cite{sil}, Proposition 2.3.1. From (i) of the proposition one can deduce that the endomorphism ring of the elliptic curve $y^2 = x^3+x$ over $\Q$ is isomorphic to $\Z [i]$. In particular, the curve posses an endomorphism $[\alpha]$ of degree $2$, which takes a point $(x,y)$ of the curve to 
\begin{equation*}
[\alpha] (x,y) = \left(\alpha^{-2} \left( x + \frac{1}{x} \right), \alpha^{-3} y \left(1 - \frac{1}{x^2} \right) \right),
\end{equation*}
where $\alpha = 1 + \sqrt{-1}$

Let $p$ be an odd prime such that $p \equiv 1 \pmod{4}$. The elliptic curve with equation $y^2 = x^3 + x$ over $\Q$ has good reduction modulo $p$ (see \cite{sil_a}, Chapter V, Proposition 5.1 or \cite{mil}, page 59) and from now on we will denote by $E$ its reduction modulo $p$.

The quadratic equation
\begin{equation}\label{case2_eq1}
x^2-2x+2 = 0
\end{equation}
admits two distinct roots in $\F_p$, since its discriminant is equal to $-4$, which is a quadratic residue in $\F_p$. Denote by $\alpha_{\omega}$ and $\alpha_{\overline{\omega}}$ the roots of  (\ref{case2_eq1}) in $\F_p$, set $k_{\omega} = \alpha_{\omega}^{-2}$ and $k_{\overline{\omega}} =  \alpha_{\overline{\omega}}^{-2}$. We notice in passing that $k_{\omega}^2 \equiv k_{\overline{\omega}}^2 \equiv - \frac{1}{4} \pmod{p}$ and $k_{\omega} \equiv - k_{\overline{\omega}} \pmod{p}$.

Fixed a positive integer $n$ we want to study the iterations over $\Pro (\F_{p^n})$ of the maps $\theta_{k_{\sigma}}$, for $\sigma \in \{\omega, \overline{\omega} \}$.

The two maps
\begin{eqnarray*}
e_{k_{\omega}} (x,y) & = & \left(k_{\omega} \cdot \left(\frac{x^2+1}{x} \right), \frac{k_{\omega}}{\alpha_{\omega}} \cdot y \cdot \frac{x^2-1}{x^2}  \right),\\
e_{k_{\overline{\omega}}} (x,y) & = & \left(k_{\overline{\omega}} \cdot \left(\frac{x^2+1}{x} \right), \frac{k_{\overline{\omega}}}{\alpha_{\overline{\omega}}} \cdot y \cdot \frac{x^2-1}{x^2}  \right)
\end{eqnarray*}
are endomorphisms of the elliptic curve 
\begin{equation*}
E : y^2 = x^3+x
\end{equation*}
over $\F_p$.  
Hence, we can study the iterations of the maps $\theta_{k_{\omega}}$ and $\theta_{k_{\overline{\omega}}}$ taking into account the fact that 
\begin{eqnarray*}
e_{k_{\omega}} (x,y) & = & \left(\theta_{k_{\omega}} (x), \frac{k_{\omega}}{\alpha_{\omega}} \cdot y \cdot \frac{x^2-1}{x^2}  \right),\\
e_{k_{\overline{\omega}}} (x,y) & = & \left(\theta_{k_{\overline{\omega}}} (x), \frac{k_{\overline{\omega}}}{\alpha_{\overline{\omega}}} \cdot y \cdot \frac{x^2-1}{x^2}  \right).
\end{eqnarray*}

Since the endomorphism ring of the elliptic curve defined by $y^2 = x^3 +x$ over $\Q$ is isomorphic to $\Z [i]$, according to \cite{lan}, Chapter 13, Theorem 12 the endomorphism ring $\End(E)$ of $E$ over $\F_p$ is also isomorphic to $R=\Z [i]$, which is an Euclidean ring with  euclidean function
\begin{equation*}
N(a + b i) = a^2 + b^2,
\end{equation*} 
for any arbitrary choice of $a, b$ in $\Z$.

By \cite{wit}, Theorem 2.3(a), there exists an isomorphism
\begin{equation*}
\psi_n : E(\F_{p^n}) \to R/(\pi_p^n-1) R,
\end{equation*} 
where $\pi_p$ is the Frobenius endomorphism. Theorem 2.4 of \cite{wit} furnishes a representation of $\pi_p$ as an element of $R$, namely
\begin{equation*}
\pi_p = \frac{p+1-m+\sqrt{d}}{2},
\end{equation*}
where
\begin{eqnarray*}
m & = & |E(\F_p)|,\\
d & = & (p+1-m)^2-4p.
\end{eqnarray*}

Moreover, $2 = e_{k_{\omega}} \circ e_{k_{\overline{\omega}}}$, being $2$ the duplication map over the curve $E$. Since in $R$ we have that $2 = \alpha \cdot \overline{\alpha}$, the endomorphisms $e_{k_{\omega}}$ and $e_{k_{\overline{\omega}}}$ are represented in $R$ by $\alpha$ and $\overline{\alpha}$.

Fix once for the remaining part of the current section $\sigma= \omega$ or $\sigma = \overline{\omega}$.
Before studying the structure of the graph $G_{\theta_{k_{\sigma}}}^{p^n}$ we partition $\Pro(\F_{p^n})$ as follows.
\begin{enumerate}
\item If $n$ is odd and $p \equiv \pm 3 \pmod{8}$, then we define
\begin{eqnarray*}
A_n & = & \{x \in \F_{p^n} : (x, y) \in E(\F_{p^n}) \text{ for some $y \in \F_{p^n}$} \} \cup \{ \infty \};\\
B_n & = & \{x \in \F_{p^n} : (x, y) \in E(\F_{p^{2n}}) \text{ for some $y \in \F_{p^{2n}} \backslash \F_{p^n}$} \} \backslash \{1, -1 \};\\
C_n & = & \{1, -1 \}. 
\end{eqnarray*} 
\item If $n$ is even or $n$ is odd and $p \equiv \pm 1 \pmod{8}$, then we define  
\begin{eqnarray*}
A_n & = & \{x \in \F_{p^n} : (x, y) \in E(\F_{p^n}) \text{ for some $y \in \F_{p^n}$} \} \cup \{ \infty \};\\
B_n & = & \{x \in \F_{p^n} : (x, y) \in E(\F_{p^{2n}}) \text{ for some $y \in \F_{p^{2n}} \backslash \F_{p^n}$} \}.
\end{eqnarray*} 
\end{enumerate}

We notice that in both cases $A_n \cap B_n = \emptyset$. Moreover, the sets $A_n$ and $C_n$ satisfy the following properties.

\begin{lemma}\label{case1_lem_01}
The following hold.
\begin{itemize}
\item If $n$ is odd and $p \equiv \pm 3 \pmod{8}$, then $A_n \cap C_n = \emptyset$.
\item If $n$ is even or $n$ is odd and $p \equiv \pm 1 \pmod{8}$, then $\{1, -1 \} \subseteq A_n$.
\end{itemize}
\end{lemma}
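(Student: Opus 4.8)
The plan is to reduce both assertions to statements about whether $2$ and $-2$ are squares in $\F_{p^n}$, and then to evaluate those quadratic characters. Since $C_n = \{1, -1\}$ contains no $\infty$, the inclusion of $\infty$ in $A_n$ plays no role, and membership of $\pm 1$ in $A_n$ is governed solely by the solvability of $y^2 = x^3 + x$ at $x = \pm 1$. For $x = 1$ the right-hand side equals $2$ and for $x = -1$ it equals $-2$; hence $1 \in A_n$ if and only if $2$ is a square in $\F_{p^n}$, and $-1 \in A_n$ if and only if $-2$ is a square in $\F_{p^n}$ (in both cases the value $\pm 2$ is nonzero, so there is no complication with $y = 0$).

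The key intermediate step is a criterion for when an element $a \in \F_p^*$ remains a square after passing to the extension $\F_{p^n}$, obtained by evaluating $a^{(p^n-1)/2}$. Writing $p^n - 1 = (p-1) \cdot S$ with $S = 1 + p + \cdots + p^{n-1}$, one gets $a^{(p^n-1)/2} = (a^{(p-1)/2})^{S} = (\pm 1)^{S}$, where the sign is $+$ or $-$ according as $a$ is or is not a square modulo $p$. Since $S$ is a sum of $n$ odd terms, $S \equiv n \pmod 2$. Thus for $n$ odd, $a$ is a square in $\F_{p^n}$ exactly when it is a square in $\F_p$; for $n$ even, $S$ is even, so $a^{(p^n-1)/2} = 1$ and every $a \in \F_p^*$ is a square in $\F_{p^n}$ (equivalently, each such $a$ is already a square in $\F_{p^2} \subseteq \F_{p^n}$).

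Next I would pin down the quadratic characters of $2$ and $-2$ modulo $p$ via the supplementary laws of quadratic reciprocity. Because $p \equiv 1 \pmod 4$, the element $-1$ is a square modulo $p$, so $2$ and $-2$ share the same character; and since $2$ is a square modulo $p$ exactly when $p \equiv \pm 1 \pmod 8$, both $2$ and $-2$ are squares modulo $p$ when $p \equiv \pm 1 \pmod 8$ and non-squares when $p \equiv \pm 3 \pmod 8$.

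Combining these ingredients yields the lemma. For the first bullet, with $n$ odd and $p \equiv \pm 3 \pmod 8$, both $2$ and $-2$ are non-squares modulo $p$, hence non-squares in $\F_{p^n}$ by the $n$-odd criterion; therefore neither $1$ nor $-1$ lies in $A_n$, so $A_n \cap C_n = \emptyset$. For the second bullet, if $n$ is even then $2$ and $-2$ are automatically squares in $\F_{p^n}$, while if $n$ is odd and $p \equiv \pm 1 \pmod 8$ then $2$ and $-2$ are squares modulo $p$ and hence squares in $\F_{p^n}$; in either case $1, -1 \in A_n$, giving $\{1, -1\} \subseteq A_n$. The argument is entirely elementary, and the only point demanding care is the bookkeeping modulo $8$: the standing hypothesis $p \equiv 1 \pmod 4$ already forces $p \equiv 1$ or $5 \pmod 8$, so the conditions ``$p \equiv \pm 3 \pmod 8$'' and ``$p \equiv \pm 1 \pmod 8$'' collapse to the single residues $5$ and $1$, which is exactly what makes the two characters of $2$ and $-2$ coincide and keeps the casework clean.
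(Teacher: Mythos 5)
Your proof is correct and follows essentially the same route as the paper: both reduce membership of $\pm 1$ in $A_n$ to whether $2$ and $-2$ are squares in $\F_{p^n}$, and both settle the mod-$p$ question via $\left( \frac{2}{p} \right) = \left( \frac{-2}{p} \right)$ under the standing hypothesis $p \equiv 1 \pmod{4}$ together with the supplementary law modulo $8$. The only (minor) difference is the transfer from $\F_p$ to $\F_{p^n}$: you compute $a^{(p^n-1)/2} = \bigl(a^{(p-1)/2}\bigr)^{S}$ with $S \equiv n \pmod{2}$, whereas the paper observes that a non-square of $\F_p$ acquires a square root exactly in $\F_{p^2}$, which lies in $\F_{p^n}$ if and only if $n$ is even; these are interchangeable elementary arguments.
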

\begin{proof}
From the equation $y^2 = x^3+x$ we get that
\begin{displaymath}
\begin{array}{ccc}
1 \in A_n & \Leftrightarrow & \text{$2$ is a square in $\F_{p^n}$};\\
-1 \in A_n & \Leftrightarrow & \text{$-2$ is a square in $\F_{p^n}$}.
\end{array}
\end{displaymath}

We note that $2$ is a square in $\F_p$ if and only if $\left( \frac{2}{p} \right) = 1$, while $-2$ is a square in $\F_p$ if and only if $\left( \frac{-2}{p} \right) = 1$. Since $p \equiv 1 \pmod{4}$, we get that
\begin{displaymath}
\left( \frac{2}{p} \right) = \left( \frac{-2}{p} \right) =
\begin{cases}
1 & \text{if $p \equiv \pm 1 \pmod{8}$};\\
-1 & \text{if $p \equiv \pm 3 \pmod{8}$}.
\end{cases}
\end{displaymath}
Therefore, $2$ and $-2$ are squares in $\F_p$ if and only if $p \equiv \pm 1 \pmod{8}$. In the other cases, the equations $y^2 = \pm 2$ have solutions in $\F_{p^2}$. Since $\F_{p^2} \subseteq \F_{p^n}$ if and only if $2 \mid n$, the thesis follows.
\end{proof}

The following lemmas hold.
\begin{lemma}\label{case2_lem_1}
Let $\tilde{x} \in \F_{p^n}$. Then, in $E(\F_{p^{2n}})$ there are exactly two rational points, $(\tilde{x}, \tilde{y})$ and $(\tilde{x}, -\tilde{y})$, with such an $x$-coordinate except for 
\begin{equation*}
\tilde{x} \in \{0, i_p, -i_p \},
\end{equation*}
where $i_p$ and $-i_p$ are the two square roots of $-1$ in $\F_p$, in which case $\tilde{y} = 0$ and $\tilde{x}$ is not $\theta_{k_{\sigma}}$-periodic.
\end{lemma}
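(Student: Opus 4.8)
The plan is to reduce the point count to counting square roots in the quadratic extension, to pin down the exceptional $x$-coordinates by a factorization, and to settle non-periodicity by a direct orbit computation. First I would note that a point of $E$ with first coordinate $\tilde{x}$ is nothing but a solution $y$ of $y^2 = \tilde{x}^3 + \tilde{x}$, so the number of rational points of $E(\F_{p^{2n}})$ with $x$-coordinate $\tilde{x}$ equals the number of square roots of $\tilde{x}^3 + \tilde{x}$ in $\F_{p^{2n}}$. Since $\tilde{x} \in \F_{p^n}$, we have $\tilde{x}^3 + \tilde{x} \in \F_{p^n}$, and the decisive elementary observation is that every nonzero element of $\F_{p^n}$ is a square in $\F_{p^{2n}}$: an element of $\F_{p^n}^*$ has order dividing $p^n-1$, whereas the squares in $\F_{p^{2n}}^*$ form the subgroup of order $(p^{2n}-1)/2 = (p^n-1)\cdot(p^n+1)/2$; as $p$ is odd, $(p^n+1)/2$ is an integer and $p^n-1$ divides $(p^{2n}-1)/2$, so the order condition is automatic. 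Hence, whenever $\tilde{x}^3+\tilde{x} \neq 0$, the equation admits two roots $\tilde{y}$ and $-\tilde{y}$ in $\F_{p^{2n}}$, distinct and nonzero because the characteristic is odd, yielding exactly the two points $(\tilde{x}, \pm\tilde{y})$.

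Next I would factor $\tilde{x}^3 + \tilde{x} = \tilde{x}(\tilde{x}^2+1)$, so that the two-point count fails precisely when $\tilde{x} = 0$ or $\tilde{x}^2 = -1$. Because $p \equiv 1 \pmod{4}$, the square roots $i_p$ and $-i_p$ of $-1$ already lie in $\F_p \subseteq \F_{p^n}$, so the exceptional set is exactly $\{0, i_p, -i_p\}$; for each of these the unique point is $(\tilde{x}, 0)$, that is $\tilde{y}=0$. These three values are visibly the nonzero $x$-coordinates of the $2$-torsion of $E$, which is what makes them special both for the counting and for the dynamics.

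Finally, for the non-periodicity I would compute the forward orbit under $\theta_{k_{\sigma}}$: by definition $\theta_{k_{\sigma}}(0) = \infty$ and $\theta_{k_{\sigma}}(\infty)=\infty$, while $i_p^{-1} = -i_p$ gives $\theta_{k_{\sigma}}(i_p) = k_{\sigma}(i_p - i_p) = 0$ and likewise $\theta_{k_{\sigma}}(-i_p) = 0$. Thus each exceptional value is carried into the fixed point $\infty$ after one or two steps and so is pre-periodic but never returns to itself, proving it is not $\theta_{k_{\sigma}}$-periodic. I do not anticipate a genuine obstacle here; the only step carrying real content is the square-count, where one must argue precisely that the quadratic extension $\F_{p^{2n}}$ is exactly large enough to split $y^2 = \tilde{x}^3+\tilde{x}$ for every nonzero value, whereas the single root $y=0$ at the three $2$-torsion $x$-coordinates is what simultaneously breaks the two-point count and forces the orbit into $\infty$.
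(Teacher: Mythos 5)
Your proof is correct and follows essentially the same route as the paper's: count the square roots of $\tilde{x}^3+\tilde{x}$ in $\F_{p^{2n}}$, identify the exceptional values as the roots of $\tilde{x}(\tilde{x}^2+1)=0$, and rule out periodicity via the computation $\theta_{k_{\sigma}}(\pm i_p)=0$, $\theta_{k_{\sigma}}(0)=\infty$. The only difference is that you spell out the order-theoretic argument that every element of $\F_{p^n}$ is a square in $\F_{p^{2n}}$, a point the paper treats as immediate.
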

\begin{proof}
The thesis is immediate if we notice that the equation $y^2 = \tilde{x}^3+\tilde{x}$ has exactly two distinct roots $y_1$ and $y_2$ in $\F_{p^{2n}}$ unless $\tilde{x}^3+\tilde{x} = 0$. As regards the last statement, we notice that $\theta_{k_{\sigma}} (\pm i_p) = 0$ and $\theta_{k_{\sigma}} (0) = \infty$. Hence, none of the three points having $y$-coordinate equal to $0$ is $\theta_{k_{\sigma}}$-periodic.
\end{proof}

\begin{lemma}\label{case2_lem_1_2}
The map $\theta_{k_{\sigma}}$ takes the elements of $A_n$ to $A_n$, the elements of $B_n$ to $B_n$ and the elements of $C_n$ to $A_n$.
\end{lemma}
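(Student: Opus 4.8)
The plan is to use throughout that $k_{\sigma}, \alpha_{\sigma} \in \F_p$, so the endomorphism $e_{k_{\sigma}}$ has coefficients in $\F_p$ and its first component is exactly $\theta_{k_{\sigma}}$. This gives two facts I would exploit repeatedly: $e_{k_{\sigma}}$ maps $E(\F_{p^m})$ into itself for every $m$, and $e_{k_{\sigma}}$ commutes with the $p^n$-power Frobenius $\pi_{p^n}$. For the inclusion $\theta_{k_{\sigma}}(A_n) \subseteq A_n$, I would take $x \in A_n \cap \F_{p^n}$ and a point $P = (x, y) \in E(\F_{p^n})$. Then $e_{k_{\sigma}}(P) \in E(\F_{p^n})$, and since its first coordinate is $\theta_{k_{\sigma}}(x)$, either $e_{k_{\sigma}}(P) = O$ and $\theta_{k_{\sigma}}(x) = \infty \in A_n$, or $\theta_{k_{\sigma}}(x)$ is the $x$-coordinate of an affine point of $E(\F_{p^n})$, hence again in $A_n$. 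Together with $\theta_{k_{\sigma}}(\infty) = \infty$ this settles $A_n$.

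For $C_n$ (which occurs only in case (1)), I would simply compute $\theta_{k_{\sigma}}(\pm 1) = \pm 2 k_{\sigma}$ and note that $(2k_{\sigma})^2 = 4 k_{\sigma}^2 = -1$ by the defining relation $k_{\sigma}^2 \equiv -\tfrac14 \pmod p$. Thus $\pm 2 k_{\sigma} = \pm i_p$ are the $x$-coordinates of the $2$-torsion points $(\pm i_p, 0) \in E(\F_p) \subseteq E(\F_{p^n})$, so $\theta_{k_{\sigma}}(C_n) \subseteq A_n$.

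The main case is $B_n$. Given $x \in B_n$, I would choose $P = (x, y) \in E(\F_{p^{2n}})$ with $y \in \F_{p^{2n}} \setminus \F_{p^n}$. Since $x \in \F_{p^n}$, the element $y$ is a root of the irreducible polynomial $T^2 - (x^3+x) \in \F_{p^n}[T]$ with conjugate $-y$, so $\pi_{p^n}(P) = (x, -y) = -P$. As $e_{k_{\sigma}}$ commutes with $\pi_{p^n}$, the point $Q := e_{k_{\sigma}}(P)$ satisfies $\pi_{p^n}(Q) = -Q$. If $Q$ lay in $E(\F_{p^n})$ we would get $Q = -Q$, forcing $Q = O$ or $\ord(Q) = 2$. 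I would rule out $Q = O$ because $P \notin \ker e_{k_{\sigma}}$: since $y \neq 0$, $P$ is neither $O$ nor $2$-torsion, while the kernel of the degree-$2$ (separable, $p$ odd) map $e_{k_{\sigma}}$ consists of $O$ and a single $2$-torsion point. I would rule out $\ord(Q) = 2$ because the second coordinate $\frac{k_{\sigma}}{\alpha_{\sigma}} \, y \, \frac{x^2-1}{x^2}$ of $Q$ is nonzero once $x \notin \{0, 1, -1\}$, which holds for $x \in B_n$. Hence $Q \notin E(\F_{p^n})$, so $\theta_{k_{\sigma}}(x) = x(Q) \in \F_{p^n}$ is the $x$-coordinate of a point defined over $\F_{p^{2n}}$ but not over $\F_{p^n}$.

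The remaining subtlety, which I expect to be the main obstacle, is that in case (1) membership in $B_n$ additionally demands $\theta_{k_{\sigma}}(x) \neq \pm 1$, since there $\{1,-1\} = C_n$ is excluded. Here I would argue directly on $\theta_{k_{\sigma}}$: solving $\theta_{k_{\sigma}}(x) = \pm 1$ amounts to $k_{\sigma} x^2 \mp x + k_{\sigma} = 0$, whose discriminant is $1 - 4 k_{\sigma}^2 = 2$. In case (1) we have $n$ odd and $p \equiv \pm 3 \pmod 8$, so by the computation in the proof of Lemma \ref{case1_lem_01} the element $2$ is a non-square in $\F_{p^n}$, and these equations have no root in $\F_{p^n}$. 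Consequently no $x \in B_n \subseteq \F_{p^n}$ can map to $\pm 1$, and therefore $\theta_{k_{\sigma}}(x) \in B_n$, completing the argument.
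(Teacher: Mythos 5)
Your proof is correct, and for $A_n$ and $C_n$ it coincides with the paper's own argument (push a point of $E(\F_{p^n})$ through $e_{k_{\sigma}}$, and compute $\theta_{k_{\sigma}}(\pm 1)=\pm 2k_{\sigma}=\pm i_p$ respectively). For $B_n$ you take a genuinely different route. The paper argues coordinatewise: writing the image point as $\bigl(\theta_{k_{\sigma}}(\tilde{x}), y_j\bigr)$ with $y_j = \pm \frac{k_{\sigma}}{\alpha_{\sigma}} \cdot \tilde{y} \cdot \frac{\tilde{x}^2-1}{\tilde{x}^2}$, it observes that $y_j$ is an $\F_{p^n}$-multiple of $\tilde{y}$, hence lies in $\F_{p^n}$ if and only if $\tilde{x}^2=1$, which is excluded for $\tilde{x} \in B_n$. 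You instead argue structurally: $\pi_p^n(P)=-P$ for a point above $B_n$, commutation of $e_{k_{\sigma}}$ (defined over $\F_p$) with Frobenius gives $\pi_p^n(Q)=-Q$ for $Q=e_{k_{\sigma}}(P)$, and $Q \in E(\F_{p^n})$ is then ruled out by the kernel argument ($Q \neq O$ since $y \neq 0$) and the $2$-torsion argument ($y(Q)\neq 0$ since $x \notin \{0,1,-1\}$); the two proofs ultimately hinge on the same non-vanishing, but yours does not depend on trusting the displayed formula for the second coordinate, while the paper's is shorter. More importantly, your final paragraph settles a point the paper passes over: when $n$ is odd and $p \equiv \pm 3 \pmod{8}$, membership in $B_n$ also requires $\theta_{k_{\sigma}}(x) \notin \{1,-1\}$, and the paper's criterion does not exclude this (in that case the points above $\pm 1$ do have $y$-coordinates outside $\F_{p^n}$, so $\pm 1$ satisfies the ``$y$-coordinate not in $\F_{p^n}$'' test and is only kept out of $B_n$ by fiat). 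Your discriminant computation --- $\theta_{k_{\sigma}}(x)=\pm 1$ forces $k_{\sigma}x^2 \mp x + k_{\sigma}=0$ with discriminant $1-4k_{\sigma}^2=2$, a non-square in $\F_{p^n}$ precisely in that case by the computation in Lemma \ref{case1_lem_01} --- closes this gap cleanly, so your write-up is, if anything, more complete than the paper's.
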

\begin{proof}
If $\tilde{x} = \infty$, then $\theta_{k_{\sigma}} (\tilde{x}) = \infty$. If $\tilde{x} \in A_n \backslash\{ \infty \}$, then there exists $\tilde{y} \in \F_{p^n}$ such that $(\tilde{x},\tilde{y}) \in E(\F_{p^n})$. Therefore, $e_{k_{\sigma}} (\tilde{x}, \tilde{y}) \in E(\F_{p^n})$ and  $\theta_{k_{\sigma}}(\tilde{x}) \in A_n$.  

If $\tilde{x} \in B_n$, then there exists $\tilde{y} \in \F_{p^{2n}} \backslash \F_{p^n}$ such that $(\tilde{x}, \tilde{y}) \in E (\F_{p^{2n}})$. We notice that $(\theta_{k_{\sigma}}(\tilde{x}), y_1)$ and $(\theta_{k_{\sigma}}(\tilde{x}), y_2)$, with
\begin{eqnarray*}
y_j & = & (-1)^j \cdot \frac{k_{\sigma}}{\alpha_{\sigma}} \cdot \tilde{y} \cdot \frac{\tilde{x}^2-1}{\tilde{x}^2}, \quad \text{for $j \in \{1, 2 \}$,}  
\end{eqnarray*}  
are the only rational points in $E(\F_{p^{2n}})$ having $\theta_{k_{\sigma}} (\tilde{x})$ as $x$-coordinate. Since $y_j \in \F_{p^n}$ if and only if $\tilde{x}^2 = 1$, because $\tilde{y} \not \in \F_{p^n}$,  and $\{1, -1 \} \not \in B_n$, we conclude that $\theta_{k_{\sigma}} (\tilde{x}) \in B_n$.  

As regards the elements of $C_n$ we notice that 
\begin{equation*}
\theta_{k_{\sigma}} (\pm 1) = \pm 2 k_{\sigma} \in \{ \pm i_p \} \subseteq A_n.
\end{equation*}
\end{proof}

According to Lemma \ref{case2_lem_1_2} we can study the iterations of $\theta_{k_{\sigma}}$ on $\Pro(\F_{p^n})$ separately on the elements of $A_n$ and $B_n$. 
Let us firstly define
\begin{displaymath}
\rho_0 = 
\begin{cases}
\alpha, & \text{if $\alpha^{-2} \equiv k_{\sigma} \pmod{\pi_p}$};\\
\overline{\alpha}, & \text{if $\overline{\alpha}^{-2} \equiv k_{\sigma} \pmod{\pi_p}$}.
\end{cases}
\end{displaymath}

Since $E(\F_{p^n})$ is isomorphic to $R/(\pi_p^n-1) R$ via the isomorphism given by $\psi_n$, the iterations of $\theta_{k_{\sigma}}$ on $A_n$ can be studied relying upon the iterations of $[\rho_0]$ in $R/(\pi_p^n-1)R$.

Now we define
\begin{eqnarray*}
E(\F_{p^{2n}})_{B_n} = \left\{(x,y) \in E(\F_{p^{2n}}) : x \in B_n \right\}
\end{eqnarray*}
and denote by $\lambda, \tau$ two elements of $\F_{p^2}$ such that
\begin{eqnarray*}
(\pm \lambda)^2 & = & 2;\\
(\pm \tau)^2 & = & -2. 
\end{eqnarray*}

If $n$ is odd and $p \equiv \pm 3 \pmod{8}$, then we define 
\begin{equation*}
E(\F_{p^{2n}})_{B_n}^* = \{ O , (0,0), (\pm i_p, 0), (1, \pm \lambda), (-1, \pm \tau) \}.
\end{equation*}

In all other cases we define
\begin{equation*}
E(\F_{p^{2n}})_{B_n}^* = \{ O , (0,0), (\pm i_p, 0) \}.
\end{equation*}
In both cases $O$ denotes the point at infinity.

The following holds.
\begin{lemma}\label{case2_lem_2}
Let $\tilde{x} \in \F_{p^{2n}}$ and $P = (\tilde{x}, \tilde{y}) \in E(\F_{p^{2n}})$. We have that $(\pi_p^n+1) P = O$ if and only if $P \in E(\F_{p^{2n}})_{B_n} \cup E(\F_{p^{2n}})_{B_n}^*$.
\end{lemma}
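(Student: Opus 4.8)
The plan is to translate the endomorphism equation $(\pi_p^n+1)P=O$ into Frobenius conditions on the two coordinates of $P$, and then to read off membership in $B_n$ and $E(\F_{p^{2n}})_{B_n}^*$ directly from those conditions. First I would use that $\pi_p$ is the $p$-power Frobenius, so that $\pi_p^n(P)=(\tilde{x}^{p^n},\tilde{y}^{p^n})$; since the negation law on $E$ reads $-(\tilde{x},\tilde{y})=(\tilde{x},-\tilde{y})$, the equation $(\pi_p^n+1)P=O$ is equivalent to $\pi_p^n(P)=-P$, that is, to the pair of scalar conditions $\tilde{x}^{p^n}=\tilde{x}$ and $\tilde{y}^{p^n}=-\tilde{y}$. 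The first of these says precisely $\tilde{x}\in\F_{p^n}$.

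Under the standing assumption $\tilde{x}\in\F_{p^n}$ I would then analyse $\tilde{y}^{p^n}=-\tilde{y}$ by splitting on whether $\tilde{y}=0$. If $\tilde{y}=0$ the condition holds automatically, and by Lemma \ref{case2_lem_1} the affine points with vanishing $y$-coordinate are exactly $(0,0)$ and $(\pm i_p,0)$, the roots of $x^3+x=x(x^2+1)$; together with $O$ these are precisely the points listed in $E(\F_{p^{2n}})_{B_n}^*$ in both cases, and since their $y$-coordinate is $0\in\F_{p^n}$ their $x$-coordinate lies in $A_n$ rather than $B_n$. If instead $\tilde{y}\neq0$, then from $\tilde{y}^2=\tilde{x}^3+\tilde{x}\in\F_{p^n}$ one gets $(\tilde{y}^{p^n})^2=\tilde{y}^2$, whence $\tilde{y}^{p^n}=\pm\tilde{y}$; therefore $\tilde{y}^{p^n}=-\tilde{y}$ holds if and only if $\tilde{y}\in\F_{p^{2n}}\backslash\F_{p^n}$.

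It then remains to match the nonzero-$\tilde{y}$ solutions, characterised by $\tilde{x}\in\F_{p^n}$ together with $\tilde{y}\in\F_{p^{2n}}\backslash\F_{p^n}$, against the definition of $B_n$, and this is where the only genuine subtlety lies: the treatment of $x=\pm1$. In Case 2 ($n$ even, or $n$ odd with $p\equiv\pm1\pmod{8}$), Lemma \ref{case1_lem_01} gives $\{1,-1\}\subseteq A_n$, so the square roots of $2$ and $-2$ lie in $\F_{p^n}$ and $x=\pm1$ contribute no solution; the solution set is then exactly $E(\F_{p^{2n}})_{B_n}$, and $E(\F_{p^{2n}})_{B_n}^*$ needs only $O$ and the three points with $\tilde{y}=0$. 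In Case 1 ($n$ odd, $p\equiv\pm3\pmod{8}$), the same lemma shows $2$ and $-2$ are non-squares in $\F_{p^n}$, so for $x=1$ the roots $\pm\lambda$ and for $x=-1$ the roots $\pm\tau$ lie in $\F_{p^{2n}}\backslash\F_{p^n}$; hence $(1,\pm\lambda)$ and $(-1,\pm\tau)$ do satisfy the kernel condition, yet $\pm1$ are deliberately deleted from $B_n$ — which is exactly why these four points are reinstated in $E(\F_{p^{2n}})_{B_n}^*$ in this case. Collecting $O$, the points with $\tilde{y}=0$, the points with $\tilde{x}\in B_n$, and (in Case 1) the four points over $x=\pm1$ yields both inclusions and hence the stated equivalence. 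The Frobenius computation itself is routine; the main obstacle is purely this consistent bookkeeping of the $x=\pm1$ points across the two case distinctions.
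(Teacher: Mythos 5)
Your proposal is correct and takes essentially the same route as the paper's own proof: both translate $(\pi_p^n+1)P=O$ into the coordinatewise Frobenius conditions $\tilde{x}^{p^n}=\tilde{x}$ and $\tilde{y}^{p^n}=-\tilde{y}$, use the sign dichotomy $\tilde{y}^{p^n}=\pm\tilde{y}$ (coming from $\tilde{y}^2\in\F_{p^n}$) to characterize $\tilde{y}\in\F_{p^{2n}}\backslash\F_{p^n}$, and dispose of the $\tilde{y}=0$ points and the $x=\pm 1$ points through the two case distinctions. The only difference is cosmetic: where the paper dismisses the points of $E(\F_{p^{2n}})_{B_n}^*$ with ``we can check by direct computation,'' you actually carry out that check via the quadratic-residue facts of Lemma \ref{case1_lem_01}, which makes your write-up slightly more complete.
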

\begin{proof}
Suppose that $(\pi_p^n+1) P = O$. If $\tilde{y} = 0$, then $P \in E(\F_{p^{2n}})_{B_n}^*$. Suppose on the contrary that $\tilde{y} \not = 0$. Since $\tilde{y} \in \F_{p^n}$ only if $\tilde{y}^{p^n} = \tilde{y}$ and by hypothesis $\tilde{y}^{p^n} = -\tilde{y}$, it follows that $\tilde{y} \in \F_{p^n}$ only if $\tilde{y} = -\tilde{y}$, namely only if $2 \tilde{y}=0$. Since $\tilde{y} \not =0$, we conclude that $\tilde{y} \in \F_{p^{2n}} \backslash \F_{p^n}$. In this latter case $P \in E(\F_{p^{2n}})_{B_n}$, unless $n$ is odd, $p \equiv \pm 3 \pmod{8}$ and $\tilde{x} \in \{1, -1 \}$, in which case $P \in E(\F_{p^{2n}})_{B_n}^*$. 

Vice versa, suppose that $P \in E(\F_{p^{2n}})_{B_n}$. Then, $\pi_p^n (P) = (x, y^{p^n})$ and $y^{p^{n}} \not = y$, since $y \in \F_{p^{2n}} \backslash \F_{p^n}$. Therefore, $\pi_p^n (P) = - P$. Finally, if $P \in E(\F_{p^{2n}})_{B_n}^*$, then we can check by direct computation that $(\pi_p^n+1) P = O$. 
\end{proof}

With the notation till now introduced and in virtue of Lemma \ref{case2_lem_2} we can say that there exists an isomorphism
\begin{equation*}
\widetilde{\psi}_n : E(\F_{p^{2n}})_{B_n} \cup E(\F_{p^{2n}})_{B_n}^*  \to R / (\pi_p^n+1) R.
\end{equation*}

Hence, the iterations of $\theta_{k_{\sigma}}$ on $B_n$ can be studied by means of the iterations of $[\rho_0]$ on $R / (\pi_p^n+1) R$.

Suppose that  $\pi_p^n-1$ (resp. $\pi_p^n+1$) factors in $R$, up to units, as
\begin{equation}\label{case2_eq_3}
\rho_0^{e_0} \cdot \left( \prod_{i=1}^v p_i^{e_i} \right) \cdot \left( \prod_{i = v+1}^w {r_i}^{e_i} \right),
\end{equation}
where 
\begin{enumerate}
\item any $e_i$ is a non-negative integer, for $0 \leq i \leq w$;
\item $N(\rho_0^{e_{0}}) = 2^{e_{0}}$;
\item for $1 \leq i \leq v$ the elements $p_i \in \Z$ are distinct primes of $R$ and $N( p_i^{e_i} ) = p_i^{2 e_i}$;
\item for $v+1 \leq i \leq w$ the elements $r_i \in R \backslash \Z$  are distinct primes of $R$, different from $\rho_0$ and $\overline{\rho}_0$,  and $N( r_i^{e_i} ) = p_i^{e_i}$, for some rational integer $p_i$ such that $r_i \overline{r}_i = p_i$.
\end{enumerate}

For the sake of clarity we define, for $1 \leq i \leq w$,  
\begin{displaymath}
\rho_i = 
\begin{cases}
p_i, & \text{if $1 \leq i \leq v$};\\
r_i, & \text{if $v+1 \leq i \leq w$}.
\end{cases}
\end{displaymath}
As a consequence of the factorization (\ref{case2_eq_3}) the ring $R / (\pi_p^n - 1) R$ (resp. $R / (\pi_p^n + 1) R$) is isomorphic to 
\begin{equation}\label{case2_eq_2}
S = \prod_{i = 0}^w R / \rho_i^{e_i} R.
\end{equation}

As regards the additive structure of the quotient rings involved in (\ref{case2_eq_2}), we notice the following.
\begin{itemize}
\item The additive group of $R / \rho_0^{e_{0}} R$ is cyclic of orders $2^{e_{0}}$. Hence, there are $\phi(2^{h_{0}})$ elements in $R / \rho_0^{e_{0}} R$  of order $2^{h_{0}}$, for each integer $0 \leq h_{0} \leq e_{0}$. 

\item For any $i \in \{1, \dots, v \}$ the additive group of $R / p_i^{e_i} R$ is isomorphic to the direct sum of two cyclic groups of order $p_i^{e_i}$. This implies that, for each integer $0 \leq h_i \leq e_i$, there are $N_{h_i}$ elements in $R / p_i^{e_i} R$ of order $p_i^{h_i}$, where
\begin{displaymath}
N_{h_i} = \left\{
\begin{array}{ll}
1 & \text{ if $h_i = 0$,}\\
{p_i}^{2 h_i} - {p_i}^{2(h_{i}-1)} & \text{ otherwise.}
\end{array}
\right.
\end{displaymath}

\item For any $i \in \{v+1, \dots, w \}$ the additive group of $R / r_i^{e_i} R$ is cyclic of order $p_i^{e_i}$.  Hence, there are $\phi(p_i^{h_i})$ elements in $R / r_i^{e_i} R$ of order $p_i^{h_i}$, for each integer $0 \leq h_i \leq e_i$. 

\end{itemize}

If $(x,y)$ is a rational point of $E(\F_{p^n})$ (resp. $E(\F_{p^{2n}})_{B_n}$), then we write $P_{(x,y)}$ for the image of $(x,y)$ in $S$.

Now we define the sets 
\begin{eqnarray*}
Z_{i} & = & \{0, 1, \dots, e_i \}, \quad \text{for any $0 \leq i \leq w$}
\end{eqnarray*}
and
\begin{equation*}
H = \prod_{i = 0}^w Z_{i}.
\end{equation*} 

If $P = (P_{0}, P_1, \dots, P_w) \in S$, then we define $h^P = (h_{0}^P, h_{1}^P, \dots, h_w^P)$ in $H$ if 
\begin{itemize}
\item $P_{0}$ has additive order $2^{h_{0}^P}$ in $R / \rho_0^{e_0} R$;
\item each $P_i$, for $1 \leq i \leq w$, has additive order $p_i^{h_i^P}$ in $R / \rho_i^{e_i} R$.
\end{itemize}

Moreover, we define $o(P) = \left(o(P_{0}), o(P_{1}), \dots, o(P_w) \right)$, where $o(P_i)$ denotes, for any $0 \leq i \leq w$,  the additive order of $P_i$ in $R / \rho_i^{e_i} R$.

The following two lemmas furnish a characterization of $\theta_{k_{\sigma}}$-periodic elements.

\begin{lemma}\label{case2_lem_7}
Let $\tilde{x} \in A_n$ (resp. $B_n$) be $\theta_{k_{\sigma}}$-periodic. Then, one of the following holds:
\begin{itemize}
\item $\tilde{x} = \infty$;
\item $\tilde{x}$ is the $x$-coordinate of a rational point $(\tilde{x},\tilde{y}) \in E(\F_{p^n})$ (resp. $E(\F_{p^{2n}})_{B_n}$) such that $P_{(\tilde{x},\tilde{y})} = (P_{0}, P_{1}, \dots, P_w)$, where $P_{0} = 0$.
\end{itemize}

\end{lemma}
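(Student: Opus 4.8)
The plan is to reduce $\theta_{k_{\sigma}}$-periodicity of $\tilde{x}$ to a statement about the distinguished component $R/\rho_0^{e_0} R$ of the decomposition $S$, and then to exploit the fact that multiplication by $\rho_0$ acts nilpotently on that factor, since $\rho_0$ is exactly the prime of $R$ representing $e_{k_{\sigma}} = [\rho_0]$ and $N(\rho_0) = 2$.

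First I would dispose of the listed case $\tilde{x} = \infty$. Assuming $\tilde{x} \not= \infty$, I would check that $\tilde{x}$ is genuinely the $x$-coordinate of a rational point on the relevant curve. For $\tilde{x} \in A_n$ this is immediate from the definition of $A_n$. For $\tilde{x} \in B_n$ it follows from Lemma \ref{case2_lem_1}: the only elements of $\F_{p^n}$ that fail to be the $x$-coordinate of two rational points of $E(\F_{p^{2n}})$ are $0, i_p, -i_p$, and those are precisely the ones declared non-$\theta_{k_{\sigma}}$-periodic there, so a periodic $\tilde{x} \in B_n$ avoids them and, by definition of $B_n$, carries a $\tilde{y} \in \F_{p^{2n}} \backslash \F_{p^n}$. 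In either case we obtain a point $(\tilde{x}, \tilde{y}) \in E(\F_{p^n})$ (resp.\ $E(\F_{p^{2n}})_{B_n}$); write $P = P_{(\tilde{x}, \tilde{y})} = (P_0, \dots, P_w)$ for its image in $S$ under $\psi_n$ (resp.\ $\widetilde{\psi}_n$).

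Next I would transport the periodicity through the isomorphism, recalling that $\theta_{k_{\sigma}}$ on $x$-coordinates is induced by the endomorphism $e_{k_{\sigma}} = [\rho_0]$. The point requiring care is that $\theta_{k_{\sigma}}$ is two-to-one on $x$-coordinates, so $\theta_{k_{\sigma}}^l(\tilde{x}) = \tilde{x}$ only forces $e_{k_{\sigma}}^l(\tilde{x}, \tilde{y}) \in \{(\tilde{x}, \tilde{y}), (\tilde{x}, -\tilde{y})\}$, i.e.\ $\rho_0^l P = \pm P$ (using that the isomorphism is a group isomorphism, so $-P$ corresponds to $(\tilde{x}, -\tilde{y})$). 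Applying $[\rho_0]^l$ once more cancels the sign ambiguity, since $\rho_0^{2l} P = \pm \rho_0^l P = P$, so $P$ is honestly fixed by $[\rho_0]^{2l}$.

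Finally I would project onto the factor $R/\rho_0^{e_0} R$, on which $\rho_0^{e_0}$ is the zero map, so that $[\rho_0]$ acts nilpotently. From $\rho_0^{2l} P_0 = P_0$ one gets $\rho_0^{2lk} P_0 = P_0$ for every $k \geq 1$, and choosing $k$ with $2lk \geq e_0$ gives $\rho_0^{2lk} P_0 = 0$, whence $P_0 = 0$. I expect the main obstacle to be exactly the sign bookkeeping of the previous paragraph, namely confirming that the two-to-one nature of $\theta_{k_{\sigma}}$ does not obstruct passing from periodicity of $\tilde{x}$ to a clean fixed-point equation for $P$; once $\rho_0^{2l} P = P$ is secured, the nilpotency of $[\rho_0]$ on $R/\rho_0^{e_0} R$ forces $P_0 = 0$ with no further work.
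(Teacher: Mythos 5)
Your proof is correct and follows essentially the same route as the paper: both reduce $\theta_{k_{\sigma}}$-periodicity of $\tilde{x}$ to the relation $[\rho_0]^l P = \pm P$ in $S$ and then force $P_0 = 0$ on the $\rho_0$-primary factor. The only cosmetic difference is the final step — the paper concludes directly from $(\rho_0^l \mp 1) P_0 = 0$ because $\rho_0$ does not divide $\rho_0^l \pm 1$ (so this element is invertible modulo $\rho_0^{e_0}$), whereas you square to eliminate the sign and then use nilpotency of $[\rho_0]$ on $R/\rho_0^{e_0} R$; both are immediate and equivalent in substance.
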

\begin{proof}
If $\tilde{x} \in A_n \backslash \{ \infty \}$ (resp. $B_n$), then $(\tilde{x},\tilde{y}) \in E(\F_{p^n})$ (resp. $E(\F_{p^{2n}})_{B_n}$) for some $\tilde{y}$ in $\F_{p^n}$ (resp. $\F_{p^{2n}}$). Moreover, $[\rho_0]^l P_{(\tilde{x},\tilde{y})} = \pm P_{(\tilde{x},\tilde{y})}$ for some positive integer $l$, namely $[\rho_0^l \pm 1] P_{0} = 0$. Since $\rho_0$ does not divide $\rho_0^l \pm 1$ in $R$, we have that $P_{0} = 0$.
\end{proof}

\begin{lemma}\label{case2_lem_5}
Let $P = (P_{0}, P_{1}, \dots, P_w)$ be a point in $S$ such that $P_{0} = 0$ and denote  by $l_i$, for any $0 \leq i \leq w$, the smallest among the positive integers $s$ such that either $[\rho_0]^s \cdot P_{i} = P_{i}$ or $[\rho_0]^s \cdot P_i = -P_{i}$.

Let $l' = \lcm (l_{0}, l_{1}, \dots, l_w)$ and denote by $l$ the smallest among the positive integers $s$ such that $[\rho_0]^s P = P$ or $[\rho_0]^s P = -P$.

Then, any $l_{i}$ is the smallest among the positive integers $s$ such that $\rho_i^{h_{i}^P}$ divides either $\rho_0^{s} + 1$ or  $\rho_0^{s} - 1$ in $R$. Moreover, $l$ is determined as follows: 
\begin{displaymath}
l = 
\begin{cases}
l' & \text{if either $\rho_i^{h_{i}^P} \mid (\rho_0^{l'} +1) $ for all $i$ or $\rho_i^{h_{i}^P} \mid (\rho_0^{l'} - 1) $ for all $i$},\\
2 l' & \text{otherwise}.
\end{cases}
\end{displaymath}
\end{lemma}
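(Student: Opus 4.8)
The plan is to work componentwise through the isomorphism $R/(\pi_p^n \mp 1)R \cong S = \prod_{i=0}^w R/\rho_i^{e_i}R$, under which $[\rho_0]$ acts on each factor as multiplication by $\rho_0$. Then $[\rho_0]^s P = P$ holds if and only if $(\rho_0^s - 1)P_i = 0$ in $R/\rho_i^{e_i}R$ for every $i$, and $[\rho_0]^s P = -P$ holds if and only if $(\rho_0^s+1)P_i = 0$ for every $i$. The whole statement thus reduces to understanding, for a single factor, the set of exponents $s$ for which $(\rho_0^s \mp 1)P_i = 0$.

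For the first assertion I would convert the vanishing $(\rho_0^s-1)P_i=0$ into a divisibility in $R$ via the $\rho_i$-adic valuation $v_{\rho_i}$. Choosing a lift $\beta_i \in R$ of $P_i$, the additive order of $P_i$ is the least positive rational integer $N$ with $\rho_i^{e_i}\mid N\beta_i$; since for $N \in \Z$ one has $v_{\rho_i}(N)=v_{p_i}(N)$ (immediate when $\rho_i=p_i$, and valid in the split case because there $\rho_i$ exactly divides $p_i$), this order equals $p_i^{e_i - v_{\rho_i}(\beta_i)}$, whence $h_i^P = e_i - v_{\rho_i}(\beta_i)$ (for $i\geq 1$; note $\rho_0^{h_0^P}=1$ since $P_0=0$). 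The same computation gives $(\rho_0^s-1)P_i=0 \iff v_{\rho_i}(\rho_0^s-1)\geq h_i^P \iff \rho_i^{h_i^P}\mid(\rho_0^s-1)$, and likewise with $+1$, which is exactly the stated description of $l_i$. Finally, as $\rho_0$ is coprime to $\rho_i$ for $i\geq 1$, it is a unit modulo $\rho_i^{h_i^P}$, so $s\mapsto\rho_0^s$ is a homomorphism $\Z\to(R/\rho_i^{h_i^P}R)^*$ and the set of $s$ with $\rho_0^s\equiv\pm1$ is the preimage of $\{\pm1\}$, hence a subgroup $l_i\Z$ of $\Z$; its positive elements are precisely the positive multiples of $l_i$.

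From this it follows that any $s$ with $[\rho_0]^s P = \pm P$ satisfies $l_i \mid s$ for every $i$, so $l' = \lcm(l_0,\dots,l_w)$ divides $s$; in particular $l' \mid l$. To pin down $l$ I would examine $s = l'$. For each $i$ with $h_i^P \geq 1$ we have $\rho_i \nmid 2$ (because $\rho_i$ differs from $\rho_0$ and $\overline{\rho}_0$, while $2 = \rho_0\overline{\rho}_0$ up to a unit), so $+1$ and $-1$ are distinct modulo $\rho_i^{h_i^P}$ and exactly one sign $\epsilon_i \in \{+1,-1\}$ satisfies $\rho_0^{l'} \equiv \epsilon_i \pmod{\rho_i^{h_i^P}}$; the factors with $h_i^P = 0$ impose no constraint. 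If all the signs agree, then $[\rho_0]^{l'}P = P$ or $[\rho_0]^{l'}P = -P$, so $l = l'$; this is precisely the case $\rho_i^{h_i^P}\mid(\rho_0^{l'}-1)$ for all $i$ or $\rho_i^{h_i^P}\mid(\rho_0^{l'}+1)$ for all $i$. If the signs are mixed, then $l'$ is not a solution, whereas squaring gives $\rho_0^{2l'}\equiv \epsilon_i^2 = 1$ in every factor, so $[\rho_0]^{2l'}P = P$; since $l$ is a positive multiple of $l'$ strictly larger than $l'$ and at most $2l'$, we conclude $l = 2l'$.

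The routine part is the valuation bookkeeping of the second paragraph, which goes through uniformly once the identity $v_{\rho_i}(N)=v_{p_i}(N)$ for $N \in \Z$ is recorded to absorb the inert factors into the same computation as the split ones. The genuinely delicate point is the global sign analysis at $s=l'$: one must notice that the componentwise conditions $\rho_0^s\equiv\pm1$ permit \emph{independent} signs across the factors, whereas $[\rho_0]^s P = \pm P$ forces a single \emph{common} sign, and it is exactly this discrepancy, resolved by the squaring argument, that produces the dichotomy between $l'$ and $2l'$.
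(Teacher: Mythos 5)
Your proposal is correct and follows essentially the same route as the paper's proof: reduce componentwise, translate $[\rho_0]^s P_i = \pm P_i$ into divisibility of $\rho_0^s \mp 1$ by $\rho_i^{h_i^P}$ (the paper does this via the identity $\gcd(\rho_i^{e_i}, o(P_i)) = \rho_i^{h_i^P}$ rather than valuations), deduce $l' \mid l$, and settle $l \in \{l', 2l'\}$ according to whether the componentwise signs agree at $s = l'$. Your write-up merely makes explicit two points the paper leaves terse, namely the converse implication in the first assertion and the squaring argument behind the $2l'$ case.
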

\begin{proof}
Fix a $i$ such that $0 \leq i \leq w$. If $P_i = 0$, then $h_i^P=0$, $l_i = 1$ and $\rho_i^0 =1$ divides both $\rho_0^{l_i}+1$ and $\rho_0^{l_i}-1$. Suppose now that $P_i \not = 0$. We notice that $o(P_i) \cdot P_i = 0$ and that, whichever $i$ is, $\gcd(\rho_i^{e_i}, o(P_i)) = \rho_i^{h_i^P}$. Therefore,
\begin{equation*}
\frac{\rho_i^{e_i}}{\gcd(\rho_i^{e_i}, o(P_i))} = \rho_i^{e_i-h_i^P} \mid P_i.
\end{equation*}
Hence, if $s$ is a positive integer such that $\rho_i^{h_i^P}$ divides either $\rho_0^s+1$ or $\rho_0^s-1$, then $\rho_i^{e_i} = \rho_i^{h_i^P} \cdot \rho_i^{e_i-h_i^P}$ divides either $(\rho_0^s+1) P_i$ or $(\rho_0^s-1) P_i$,
namely 
\begin{equation*}
[\rho_0]^s \cdot P_i = P_i \quad \text{or} \quad [\rho_0]^s \cdot P_i = - P_i. 
\end{equation*}
We can therefore conclude that $l_i$ is also the smallest among the positive integers $s$ such that $\rho_i^{h_{i}^P}$ divides either $\rho_0^{s} + 1$ or  $\rho_0^{s} - 1$ in $R$.

Since $[\rho_0]^l P = P$ or $[\rho_0]^l P = -P$, we have that either $[\rho_0]^l P_i = P_i$ for all $i$ or $[\rho_0]^l P_i = -P_i$ for all $i$. Then, $l$ must be a common multiple of all $l_i$. In fact, suppose that for a chosen integer $i$ such that $0 \leq i \leq w$
\begin{displaymath}
\begin{cases}
l = l_i \cdot q_i + t_i\\
0 \leq t_i < l_i
\end{cases}
\end{displaymath}
for integers $q_i, t_i$. Then,
\begin{equation*}
[\rho_0]^{l} P_i = [\rho_0]^{l_i \cdot q_i + t_i} P_i = [\rho_0]^{t_i} (\pm P_i) = \pm P_i.
\end{equation*}
By definition of $l_i$ we conclude that $t_i=0$. Hence, $l_i \mid l$ and $l' \mid l$. Since $[\sigma]^{l'} P_i = \pm P_i$ for any $i$, we get the thesis. 
\end{proof}

The following holds.
\begin{lemma}\label{case2_lem_6}
Let $x_0 \in \F_{p^n}$ be $\theta_{k_{\sigma}}$-periodic, $s$ a positive integer and $x_s = \theta_{k_{\sigma}}^s (x_0)$. Let $(x_0,y_0) \in E(\F_{p^n})$ (resp. $E(\F_{p^{2n}})_{B_n}$) for some $y_0 \in \F_{p^n}$ (resp. $\F_{p^{2n}}$) and $(x_s, y_s) \in E(\F_{p^n})$ (resp. $E(\F_{p^{2n}})_{B_n}$) for some $y_s \in \F_{p^n}$ (resp. $\F_{p^{2n}}$). If $Q^{(0)} = P_{(x_0,y_0)}$ and $Q^{(s)} = P_{(x_s,y_s)}$, then $h^{Q^{(0)}} = h^{Q^{(s)}}$.
\end{lemma}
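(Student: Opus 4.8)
The plan is to pass to the $R$-module $S$ via the isomorphisms $\psi_n$ (resp. $\widetilde{\psi}_n$) and to show that the transition from $Q^{(0)}$ to $Q^{(s)}$ is, up to an overall sign, nothing but multiplication by $\rho_0^s$, an operation which preserves the order vector $h$ coordinate by coordinate.

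First I would set up the dynamical dictionary. Since $\psi_n$ (resp. $\widetilde{\psi}_n$) is an isomorphism of $R$-modules intertwining the endomorphism $[\rho_0]$ with multiplication by $\rho_0$ on $S$, applying $e_{k_{\sigma}}^s = [\rho_0]^s$ to the point $(x_0,y_0)$ yields a point whose image in $S$ is $[\rho_0]^s Q^{(0)}$. On the other hand, this point has $x$-coordinate $\theta_{k_{\sigma}}^s(x_0) = x_s$, so it equals either $(x_s, y_s)$ or $(x_s,-y_s)$; since $(x_s,-y_s) = -(x_s,y_s)$ in the group law and the isomorphism is additive, I obtain $[\rho_0]^s Q^{(0)} = \varepsilon\, Q^{(s)}$ for a single sign $\varepsilon \in \{1,-1\}$.

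Next I would discard the sign. Because $o(-P_i) = o(P_i)$ in each $R / \rho_i^{e_i} R$, negation leaves the vector $h$ unchanged, so $h^{Q^{(s)}} = h^{\varepsilon Q^{(s)}} = h^{[\rho_0]^s Q^{(0)}}$, and it remains to prove $h^{[\rho_0]^s Q^{(0)}} = h^{Q^{(0)}}$, which I would do componentwise. For the zeroth coordinate, Lemma \ref{case2_lem_7} applies since $x_0$ is $\theta_{k_{\sigma}}$-periodic and yields $Q^{(0)}_{0} = 0$; hence $[\rho_0]^s Q^{(0)}_{0} = 0$ as well, so $h_{0}^{[\rho_0]^s Q^{(0)}} = h_{0}^{Q^{(0)}} = 0$. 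For each index $i$ with $1 \le i \le w$, the prime $\rho_i$ is by construction distinct (up to units) from $\rho_0$ in the principal ideal domain $R$, whence $\gcd(\rho_0,\rho_i) = 1$ and $\rho_0$ is a unit modulo $\rho_i^{e_i}$. Therefore multiplication by $\rho_0^s$ is an automorphism of the additive group $R / \rho_i^{e_i} R$, and automorphisms preserve additive orders, so $h_{i}^{[\rho_0]^s Q^{(0)}} = h_{i}^{Q^{(0)}}$. Collecting all coordinates gives $h^{[\rho_0]^s Q^{(0)}} = h^{Q^{(0)}}$, and hence the claim.

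The hard part will be the bookkeeping in the first step: verifying that the ambiguity in the $y$-coordinate of $e_{k_{\sigma}}^s(x_0,y_0)$ really collapses into a single global sign on $Q^{(s)}$, and that this sign is irrelevant to $h$. Everything after that is the elementary fact that multiplication by a unit is order-preserving, applied in each component, together with the coprimality $\gcd(\rho_0,\rho_i)=1$ for $i \ge 1$ that is already built into the factorization (\ref{case2_eq_3}).
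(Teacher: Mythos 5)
Your proposal is correct and follows essentially the same route as the paper's own proof: both reduce the statement to the identity $Q^{(s)} = \pm [\rho_0]^s Q^{(0)}$ in $S$, dispose of the sign since negation preserves additive orders, handle the zeroth component via $Q_0^{(0)} = 0$ (Lemma \ref{case2_lem_7}), and handle the components $i \neq 0$ by the coprimality of $\rho_0$ and $\rho_i$, which makes multiplication by $\rho_0^s$ order-preserving. Your write-up merely makes explicit two points the paper leaves implicit, namely how the global sign arises from the $y$-coordinate ambiguity and why coprimality yields an automorphism of $R/\rho_i^{e_i}R$.
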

\begin{proof}
Since $x_s = \theta_{k_{\sigma}}^s (x_0)$, we deduce that $Q^{(s)} = \pm [\rho_0]^s Q^{(0)}$. Therefore, if $Q^{(0)} = (Q_{0}^{(0)}, Q_{1}^{(0)}, \dots, Q_w^{(0)})$, then $Q^{(s)} = \pm ([\rho_0]^s Q_{0}^{(0)}, \dots, [\rho_0]^s Q_w^{(0)})$. For any $i \not = 0$ we have that $\rho_0^s$ and $\rho_i$ are coprime. Therefore, $h_i^{Q^{(0)}} = h_i^{Q^{(s)}}$ for any $i \not = 0$. Finally, $Q_{0}^{(0)} = 0$, implying that $h^{Q^{(0)}}_{0} = h^{Q^{(s)}}_{0} = 0$. Therefore we can conclude that $h^{Q^{(0)}} = h^{Q^{(s)}}$.
\end{proof}

We introduce the following notation.

\begin{definition}
If $x \in \Pro (\F_{p^n})$ is $\theta_{k_{\sigma}}$-periodic, then we denote by
\begin{equation*}
\left< x \right>_{\theta_{k_{\sigma}}}  = \{ \theta_{k_{\sigma}}^r (x) : r \in \N \} 
\end{equation*}
the elements of the cycle of $x$ with respect to the map $\theta_{k_{\sigma}}$.
\end{definition}

In virtue of Lemmas \ref{case2_lem_5} and \ref{case2_lem_6} we can give the following definition.
\begin{definition}
If $h = (h_{0}, h_{1}, \dots, h_w) \in H$, then $C_h$ denotes the set of all cycles $\left< x \right>_{\theta_{k_{\sigma}}}$ of $G^{p^n}_{\theta_{k_{\sigma}}}$ where $x \in A_n$ (resp. $B_n$) and exactly one of the following conditions holds:
\begin{itemize}
\item $x = \infty$ (and $h=(0,0, \dots, 0)$);
\item $(x, y) \in E(\F_{p^n})$ (resp. $E (\F_{p^{2n}})_{B_n}$) for some $y \in \F_{p^n}$ (resp. $\F_{p^{2n}}$) and $h^{P_{(x,y)}} = h$. 
\end{itemize}
Moreover, we introduce the following notations:
\begin{itemize}
\item $l_h$ is the length of the cycles in $C_h$;
\item $C_{A_n} = \{\langle x \rangle_{\theta_{k_{\sigma}}} \in G^{p^n}_{\theta_{k_{\sigma}}} : x \in A_n \}$;
\item $C_{B_n} = \{\langle x \rangle_{\theta_{k_{\sigma}}} \in G^{p^n}_{\theta_{k_{\sigma}}} : x \in B_n \}$;
\item $N_h  =  \dfrac{1}{2 l_h} \cdot \phi(2^{h_0}) \cdot \left( \displaystyle\prod_{i=1}^{v} N_{h_i} \right)  \cdot \left( \displaystyle\prod_{i=v+1}^{w} \phi (p_i^{h_i}) \right)$.
\end{itemize}
\end{definition}

Now we state two theorems, concerning respectively the cycles of $C_{A_n}$ and $C_{B_n}$.
\begin{theorem}\label{case2_thm_1_A}
Let $S \cong R / (\pi_p^n-1) R$ and let $H_A \subseteq H$ be the set formed by all $h \in H$ such that $h_0 = 0$. Then, 
\begin{equation*}
C_{A_n} = \bigsqcup_{h \in H_A} C_h.
\end{equation*}

Moreover, $|C_h|= 1$ if $h_i=0$ for all $0 \leq i \leq w$, while $|C_h| = N_h$.
\end{theorem}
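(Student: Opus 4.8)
The plan is to prove the two claims of Theorem~\ref{case2_thm_1_A} separately: first the partition $C_{A_n} = \bigsqcup_{h \in H_A} C_h$, and then the counting formula $|C_h| = N_h$ (with the degenerate case $|C_h| = 1$ treated apart). First I would observe that, by Lemma~\ref{case2_lem_7}, every $\theta_{k_{\sigma}}$-periodic element $\tilde{x} \in A_n$ is either $\infty$ or the $x$-coordinate of a rational point $(\tilde{x}, \tilde{y}) \in E(\F_{p^n})$ whose image $P_{(\tilde{x},\tilde{y})}$ in $S$ has zeroth component $P_0 = 0$, i.e.\ satisfies $h_0^{P} = 0$. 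This is exactly the condition $h \in H_A$. Thus every cycle in $C_{A_n}$ lies in some $C_h$ with $h \in H_A$, giving $C_{A_n} = \bigcup_{h \in H_A} C_h$.

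\textbf{Disjointness of the union.}
To upgrade this to a disjoint union I would verify that the $C_h$, $h \in H_A$, are pairwise disjoint. The key point here is Lemma~\ref{case2_lem_6}: if $x_0$ is periodic and $x_s = \theta_{k_{\sigma}}^s(x_0)$ lies in the same cycle, then the vectors $h^{Q^{(0)}}$ and $h^{Q^{(s)}}$ coincide. Hence the invariant $h$ is genuinely attached to the cycle $\langle x \rangle_{\theta_{k_{\sigma}}}$ and not merely to the chosen representative, so a given cycle cannot belong to $C_h$ and $C_{h'}$ for distinct $h, h'$. One subtlety to address is that a periodic $\tilde{x}$ arises as the $x$-coordinate of two points $(\tilde{x}, \pm \tilde{y})$; since these differ by sign and $h^{P}$ depends only on the additive orders of the components (which are insensitive to sign), both yield the same $h$, so the assignment $\langle x \rangle \mapsto h$ is well defined. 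This establishes the partition.

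\textbf{Counting the cycles in $C_h$.}
For the cardinality I would count, for fixed $h \in H_A$, the periodic points in $A_n$ realizing that $h$, then divide by the cycle length to get the number of cycles. Using the isomorphism $\psi_n : E(\F_{p^n}) \to S = \prod_{i=0}^w R/\rho_i^{e_i} R$, a point $P = (P_0, \dots, P_w)$ contributes to $C_h$ precisely when $P_0 = 0$ (forced since $h_0 = 0$) and each $P_i$ has additive order $\rho_i^{h_i}$. By the additive-structure count recorded before the theorem, the number of such components is $\phi(2^{h_0})$ in the $\rho_0$-slot (which is $1$ when $h_0 = 0$), $N_{h_i}$ for each $1 \le i \le v$, and $\phi(p_i^{h_i})$ for each $v+1 \le i \le w$; multiplying gives the total number of qualifying points $P$. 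Two such points sharing an $x$-coordinate, namely $P$ and $-P$, correspond to the single periodic element $\tilde{x}$, which accounts for one factor of $\tfrac12$; and by Lemma~\ref{case2_lem_5} each cycle has common length $l_h$, so dividing the number of periodic $x$-values by $l_h$ yields $N_h = \tfrac{1}{2 l_h} \phi(2^{h_0}) \bigl(\prod_{i=1}^v N_{h_i}\bigr)\bigl(\prod_{i=v+1}^w \phi(p_i^{h_i})\bigr)$.

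\textbf{Main obstacle and the degenerate case.}
I expect the main difficulty to be the careful bookkeeping of the factor $\tfrac12$, that is, justifying rigorously that the map sending a periodic point $P \in S$ (with $P_0 = 0$) to its $x$-coordinate $\tilde{x} \in A_n$ is exactly two-to-one and that the two preimages $P, -P$ carry the same $h$, so that no periodic $x$-value is over- or under-counted; the exceptional points with $\tilde{y} = 0$ from Lemma~\ref{case2_lem_1} are non-periodic and so do not interfere. Finally, the degenerate case $h = (0,0,\dots,0)$ must be handled separately: here the only periodic element is $\infty$, which forms a fixed point, so $C_h$ consists of the single length-$1$ cycle $\langle \infty \rangle_{\theta_{k_{\sigma}}}$ and $|C_h| = 1$, consistent with the formula once the empty products and $\phi(2^0) = 1$ are interpreted correctly.
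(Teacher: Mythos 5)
Your proposal is correct and follows essentially the same route as the paper: the partition comes from Lemma~\ref{case2_lem_7} (with well-definedness of $h$ on cycles via Lemmas~\ref{case2_lem_5} and~\ref{case2_lem_6}), and the count $|C_h| = N_h$ is obtained exactly as in the paper by counting points $Q \in S$ with $h^Q = h$, halving because $\pm Q$ share an $x$-coordinate, and dividing by the common cycle length $l_h$, with the all-zero $h$ giving the single cycle $\{\infty\}$. The only cosmetic difference is that you justify the two-to-one correspondence by invoking the non-periodicity of the $\tilde{y}=0$ points (Lemma~\ref{case2_lem_1}), where the paper instead notes that the additive order of $P_{(\tilde{x},\tilde{y})}$ cannot be $2$ when $h_0 = 0$ and $h \neq (0,\dots,0)$; both arguments are valid.
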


\begin{proof}
Since $C_h \subseteq C_{A_n}$ for any $h \in H_A$, we have that $\displaystyle\bigsqcup_{h \in H_A} C_h \subseteq C_{A_n}$. Vice versa, if $\langle \tilde{x} \rangle_{\theta_{k_{\sigma}}} \in C_{A_n}$, then either $\tilde{x} = \infty$, or $(\tilde{x}, \tilde{y}) \in E(\F_{p^n})$ for some $\tilde{y} \in \F_{p^n}$ and $P = P_{(\tilde{x}, \tilde{y})} = (0, P_1, \dots, P_w)$ by Lemma \ref{case2_lem_7}. Hence, $h_0^P = 0$ and $h^P \in H_A$. Therefore, $C_{A_n} \subseteq \displaystyle\bigsqcup_{h \in H_A} C_h$.

If $\left< \tilde{x} \right>_{\theta_{k_{\sigma}}} \in C_h$, where $h \in H_A$ with all $h_i = 0$, then $h=h^P$, where all $P_i = 0$. Therefore, $ \tilde{x} = \infty$ and $|C_h| = 1$. 

Consider now $\left< \tilde{x} \right>_{\theta_{k_{\sigma}}} \in C_h$ for some $h \in H_A \backslash \{(0, \dots, 0) \}$. Then, $(\tilde{x}, \tilde{y}) \in E(\F_{p^n})$ for some $\tilde{y} \in \F_{p^n}$ and the additive order of $P_{(\tilde{x}, \tilde{y})}$ is not $2$. Hence, there are two distinct rational points in $E(\F_{p^n})$ with such a $x$-coordinate. Since the length of the cycle $\left< \tilde{x} \right>_{\theta_{k_{\sigma}}}$ is $l_h$ and the number of points $Q$  in $S$ such that $h^Q = h$ is 
\begin{equation*}
\phi(2^{h_{0}}) \cdot \left( \displaystyle\prod_{i=1}^{v} N_{h_i} \right) \cdot \left( \displaystyle\prod_{i=v+1}^{w} \phi(p_i^{h_i}) \right),
\end{equation*}
the thesis follows.

\end{proof} 

\begin{theorem}\label{case2_thm_1_B}
Let $S \cong R / (\pi_p^n+1) R$ and let $H_B \subseteq H$ be the set formed by all $h \in H$ such that $h_0 = 0$ and $h_i \not =0$ for some $1 \leq i \leq w$. Then,
\begin{equation*}
C_{B_n} = \bigsqcup_{h \in H_B} C_h
\end{equation*}
and $|C_h| = N_h$ for any $h \in H_B$.
\end{theorem}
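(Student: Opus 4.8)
The plan is to follow the proof of Theorem~\ref{case2_thm_1_A} closely, working this time with the isomorphism $\widetilde{\psi}_n \colon E(\F_{p^{2n}})_{B_n} \cup E(\F_{p^{2n}})_{B_n}^{*} \to R/(\pi_p^n+1)R \cong S$ furnished by Lemma~\ref{case2_lem_2}, in place of $\psi_n$ and $R/(\pi_p^n-1)R$. I would first prove the set-theoretic decomposition. The inclusion $\bigsqcup_{h \in H_B} C_h \subseteq C_{B_n}$ is immediate from the definition of $C_h$ in the $B_n$-case. For the converse, take $\langle \tilde{x} \rangle_{\theta_{k_\sigma}} \in C_{B_n}$: then $\tilde{x} \in B_n$ is $\theta_{k_\sigma}$-periodic, so $(\tilde{x}, \tilde{y}) \in E(\F_{p^{2n}})_{B_n}$ for some $\tilde{y} \in \F_{p^{2n}} \backslash \F_{p^n}$, and Lemma~\ref{case2_lem_7} gives $h_0^{P} = 0$ for $P = P_{(\tilde{x}, \tilde{y})}$. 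Moreover $h_i^{P} \neq 0$ for some $i \geq 1$: indeed $P$ is not one of the auxiliary points, since the $x$-coordinates of the points in $E(\F_{p^{2n}})_{B_n}^{*}$ lie in $\{0, \pm i_p\} \cup \{1, -1\}$, none of which belongs to $B_n$; hence $P \neq O$, so not all $h_i^P$ vanish. Thus $h^P \in H_B$, giving $C_{B_n} \subseteq \bigsqcup_{h \in H_B} C_h$, and the union is disjoint because $h$ is constant along each cycle by Lemma~\ref{case2_lem_6}.

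For the cardinalities, fix $h \in H_B$ and count the set $\{Q \in S : h^Q = h\}$ in two ways. On the one hand, by the description of the additive groups $R/\rho_i^{e_i}R$ its size is $\phi(2^{h_0}) \cdot \prod_{i=1}^{v} N_{h_i} \cdot \prod_{i=v+1}^{w} \phi(p_i^{h_i})$, which equals $\prod_{i=1}^{v} N_{h_i} \cdot \prod_{i=v+1}^{w} \phi(p_i^{h_i})$ because $h_0 = 0$ and $\phi(1) = 1$. On the other hand, each cycle of $C_h$ accounts for exactly $2 l_h$ of these points: each of its $l_h$ distinct $x$-coordinates lies outside $\{0, \pm i_p\}$ and so, by Lemma~\ref{case2_lem_1}, lifts to two distinct points $(\tilde{x}, \pm\tilde{y}) \in E(\F_{p^{2n}})_{B_n}$, whose images $P, -P$ in $S$ satisfy $h^{-P} = h^{P} = h$; lifts of distinct $x$-coordinates are disjoint, so a cycle contributes $2 l_h$ distinct points, and conversely every $Q$ with $h^Q = h$ projects to an $x$-coordinate lying in a unique such cycle. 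Dividing the first count by $2 l_h$ yields $|C_h| = \tfrac{1}{2 l_h} \prod_{i=1}^{v} N_{h_i} \prod_{i=v+1}^{w} \phi(p_i^{h_i}) = N_h$.

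Two points underpin this count. First, every $Q \in S$ with $h^Q = h \in H_B$ genuinely comes from a periodic element of $B_n$: from $h_0^Q = 0$ we have $Q = (0, Q_1, \dots, Q_w)$, and since $\rho_0$ is coprime to each $\rho_i$ with $i \geq 1$, multiplication by $\rho_0$ is an automorphism of $\{0\} \times \prod_{i=1}^{w} R/\rho_i^{e_i}R$; hence $Q$ is $[\rho_0]$-periodic and, being non-auxiliary, its $x$-coordinate lies in $B_n$ and is $\theta_{k_\sigma}$-periodic. Second, the points of $E(\F_{p^{2n}})_{B_n}^{*}$ must contribute no $h$-value in $H_B$: the point $O$ has $h = (0, \dots, 0)$; the nonzero $2$-torsion points $(0,0)$ and $(\pm i_p, 0)$ have additive order $2$, and as $2 = \rho_0 \overline{\rho}_0$ with $\overline{\rho}_0$ an associate of $\rho_0$ is coprime to every $\rho_i$ with $i \geq 1$, their images lie in the $\rho_0$-primary part of $S$, so $h_0 \neq 0$; and in the exceptional case the points $(1, \pm\lambda), (-1, \pm\tau)$ are annihilated by $\rho_0^{3}$ (one computes $[\rho_0](1,\pm\lambda) \in \{(i_p, 0),(-i_p,0)\}$, $[\rho_0]^2(1,\pm\lambda) = (0,0)$ and $[\rho_0]^3(1,\pm\lambda) = O$), hence also lie in the $\rho_0$-primary part, with $h_i = 0$ for all $i \geq 1$. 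In every case $h \notin H_B$.

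The parallel with Theorem~\ref{case2_thm_1_A} and the manipulation of the groups $R/\rho_i^{e_i}R$ are routine; the step demanding the most care is the bookkeeping around $E(\F_{p^{2n}})_{B_n}^{*}$. One must confirm that these points, introduced solely to turn $\widetilde{\psi}_n$ into an isomorphism, all land in the $\rho_0$-primary component of $S$ and thus carry $h$-values outside $H_B$, so that the two-way count over $\{Q : h^Q = h\}$ matches $2 l_h \cdot |C_h|$ exactly, with no spurious contributions.
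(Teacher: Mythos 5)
Your proof is correct and takes essentially the same route as the paper's: the same two-inclusion argument for $C_{B_n} = \bigsqcup_{h \in H_B} C_h$ via Lemma \ref{case2_lem_7}, and the same count of the points $Q \in S$ with $h^Q = h$ divided by $2 l_h$ to get $|C_h| = N_h$. Your additional bookkeeping — checking that the auxiliary points of $E(\F_{p^{2n}})_{B_n}^*$ carry $h$-values outside $H_B$ and that every $Q$ with $h^Q = h$ really arises from a $\theta_{k_{\sigma}}$-periodic element of $B_n$ — just makes explicit steps the paper leaves implicit, rather than constituting a different approach.
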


\begin{proof}
Since $C_h \subseteq C_{B_n}$ for any $h \in H_B$, we have that $\displaystyle\bigsqcup_{h \in H_B} C_h \subseteq C_{B_n}$. Vice versa, if $\langle \tilde{x} \rangle_{\theta_{k_{\sigma}}} \in C_{B_n}$, then $\tilde{x} \in \F_{p^n}$ and $(\tilde{x}, \tilde{y}) \in E(\F_{p^{2n}})_{B_n}$ for some $\tilde{y} \in \F_{p^{2n}} \backslash \F_{p^n}$. In particular, $P = P_{(\tilde{x}, \tilde{y})}$ is not the point at infinity and $h^P \not = (0, 0, \dots, 0)$. At the same time $h_0^P =0$ by Lemma \ref{case2_lem_7} and consequently $h_i^P \not = 0$ for some $1 \leq i \leq w$. Therefore, $C_{B_n} \subseteq \displaystyle\bigsqcup_{h \in H_B} C_h$.

Consider now $\left< \tilde{x} \right>_{\theta_{k_{\sigma}}} \in C_h$ for some $h \in H_B$. Then, $(\tilde{x}, \tilde{y}) \in E(\F_{p^{2n}})$ for some $\tilde{y} \in \F_{p^{2n}}$. Moreover, by definition of $H_B$, the additive order of $P_{(\tilde{x}, \tilde{y})}$ is not $2$. Hence, there are two distinct rational points in $E(\F_{p^{2n}})_{B_n}$ having $\tilde{x}$ as $x$-coordinate. Since the length of the cycle $\left< \tilde{x} \right>_{\theta_{k_{\sigma}}}$ is $l_h$ and the number of points $Q$  in $S$ such that $h^Q = h$ is 
\begin{equation*}
\phi(2^{h_{0}}) \cdot \left( \displaystyle\prod_{i=1}^{v} N_{h_i} \right) \cdot \left( \displaystyle\prod_{i=v+1}^{w} \phi(p_i^{h_i}) \right),
\end{equation*}
the thesis follows.
\end{proof} 

In the following we will denote by $V_{A_n}$ (resp. $V_{B_n}$) the set of the $\theta_{k_{\sigma}}$-periodic elements of $A_n$ (resp. $B_n$).  Before proceeding with the description of the trees rooted in vertices of $V_{A_n}$ and $V_{B_n}$ we notice that,  according to \cite{gil},
\begin{eqnarray*}
R / \rho_0^{e_{0}} R & = & \left\{ \sum_{i=0}^{e_{0}-1} \delta_i \cdot [\rho_0]^i: \delta_i = 0 \text{ or } 1 \right\}.
\end{eqnarray*}

The following theorem characterizes the reversed trees having root in $V_{A_n}$ (resp. $V_{B_n}$).
\begin{theorem}\label{case2_thm_2}
Any element $x_0 \in V_{A_n}$ (resp. $V_{B_n}$) is the root of a reversed binary tree having the following properties.
\begin{itemize}
\item If $x_0 \not = \infty$, then the depth of the tree is $e_0$, the root has exactly one child, while all other vertices have two distinct children.
\item If $x_0 = \infty$, $n$ is odd and $p \equiv \pm 3 \pmod{8}$, then the tree has depth $3$, the root and the two vertices at the level $2$ have exactly one child each, while the vertex at the level $1$ has two distinct children.  
\item If $x_0 = \infty$, $n$ is odd and $p \equiv \pm 1 \pmod{8}$ or $n$ is even, then the tree has depth $e_0$, the root and the two vertices at the level $2$ have exactly one child each, while all other vertices have two distinct children. 
\end{itemize}
\end{theorem}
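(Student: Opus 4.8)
The plan is to transport the entire question to the ring $S$ and read off the tree structure from the action of multiplication by $\rho_0$, exactly as in the proof of Theorem 2.4 of \cite{SU35}. Via the isomorphism $\psi_n$ (for a root in $A_n$) or $\widetilde{\psi}_n$ (for a root in $B_n$), the map $\theta_{k_{\sigma}}$ on $x$-coordinates corresponds to the endomorphism $[\rho_0]$ on $S$, read modulo the identification $P \sim -P$ coming from the fact that an $x$-coordinate forgets the sign of $y$. Hence the reversed tree rooted at a periodic $x_0$ is the tree of iterated $[\rho_0]$-preimages (up to sign) of a representative $P$ of $x_0$, pruned of the periodic branch. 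The decisive structural fact is that in $S = \prod_{i=0}^w R/\rho_i^{e_i} R$ the element $\rho_0$ is a unit on each factor $R/\rho_i^{e_i}R$ with $i \geq 1$ (being coprime to the odd primes $\rho_i$), while on the $2$-primary factor $R/\rho_0^{e_0}R$ it is a uniformizer with $\ker([\rho_0])$ of order $N(\rho_0) = 2$, and $2$ equals $\rho_0^2$ up to a unit of $R$.

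First I would treat every root $x_0 \neq \infty$ (this covers all roots in $V_{B_n}$, since $\infty \in A_n$). By Lemma \ref{case2_lem_7} its representative is $P = (0, P_1, \dots, P_w)$, and by Lemma \ref{case2_lem_6} the invariants attached to the factors $i \geq 1$ are constant along the whole tree; since $\rho_0$ is a unit there, all the growth happens in the $2$-primary coordinate. There the $\rho_0$-adic order of the component rises by exactly one at each preimage step (because $[\rho_0]$ has kernel of order $2$ on $R/\rho_0^{e_0}R$), so the tree terminates exactly when that order reaches $e_0$, giving depth $e_0$. For the child count, two $[\rho_0]$-preimages of a vertex yield the same $x$-coordinate precisely when they are negatives of one another; writing this as $2Q' = K$, with $K$ the generator of $\ker([\rho_0])$, and reading it on each factor $i \geq 1$ forces all those coordinates to be $2$-torsion, hence zero, i.e. forces all the invariants $h_i$ ($i \geq 1$) to vanish. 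For $x_0 \neq \infty$ at least one such invariant is nonzero, so no collapse occurs: every non-root vertex has two distinct children, while the root has exactly one (its second $\rho_0$-preimage being the periodic predecessor on the cycle).

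Next I would treat the exceptional root $x_0 = \infty$, whose representative is $O$ and for which all the invariants $h_i$ vanish, so collapses are now possible. I would pin down the top of the tree by the explicit rational $2$-torsion of $E : y^2 = x^3+x$, namely $(0,0)$ and $(\pm i_p, 0)$: one computes $\theta_{k_{\sigma}}^{-1}(\infty) \setminus \{\infty\} = \{0\}$ and $\theta_{k_{\sigma}}^{-1}(0) = \{i_p, -i_p\}$, so $\infty$ has the single child $0$ and $0$ has the two children $\pm i_p$. The collapse condition $2Q' = K$ holds exactly when the $2$-primary coordinate of $Q'$ has $\rho_0$-adic order $2$, because $2$ is $\rho_0^2$ up to a unit; thus it is triggered at precisely the two level-$2$ vertices $\pm i_p$ and at no higher level, so each of $\pm i_p$ has a single child while every vertex above level $2$ again has two. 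The single children of $\pm i_p$ are $\pm 1$, since $\theta_{k_{\sigma}}(\pm 1) = \pm 2 k_{\sigma} = \pm i_p$. Finally the depth dichotomy is governed by the $4$-torsion points $(1, \pm\lambda)$ and $(-1, \pm\tau)$, with $\lambda^2 = 2$, $\tau^2 = -2$: by the residue computations in the proof of Lemma \ref{case1_lem_01}, $\lambda, \tau \in \F_{p^n}$ if and only if $p \equiv \pm 1 \pmod 8$ or $2 \mid n$, in which case $\pm 1 \in A_n$ and the tree continues generically up to depth $e_0$; when $n$ is odd and $p \equiv \pm 3 \pmod 8$, instead $\lambda, \tau \notin \F_{p^n}$, the points above $\pm 1$ are not rational over $\F_{p^n}$, the vertices $\pm 1$ (then lying in $C_n$) are leaves, and the tree has depth $3$.

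The main obstacle is the bookkeeping at the interface between the sign identification $P \sim -P$ and the $2$-primary part. The clean points are establishing that two $[\rho_0]$-preimages share an $x$-coordinate if and only if they are negatives, recasting this as $2Q' = K$, and exploiting that $2$ equals $\rho_0^2$ up to a unit to locate the collapse at $\rho_0$-adic order exactly $2$; this single computation simultaneously explains why the root and the two level-$2$ vertices are the only vertices with one child and why no further collapses occur higher up. The delicate part is that the three rational $2$-torsion points and the $4$-torsion points $(\pm 1, \cdot)$ all sit in the $2$-primary factor, so one must track them together with the $A_n/B_n/C_n$ membership to see correctly where the $\infty$-tree branches, where it collapses, and where (depending on the rationality of $\sqrt{\pm 2}$) it terminates.
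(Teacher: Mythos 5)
Your proposal is correct, and it shares the paper's overall skeleton (transport to $S$ via $\psi_n$ or $\widetilde{\psi}_n$, bound the depth by $e_0$ through the $2$-primary factor, treat $\infty$, $0$, $\pm i_p$, $\pm 1$ and the rationality of $\sqrt{\pm 2}$ specially), but your child-counting mechanism is genuinely different from the paper's. The paper never formulates a collapse criterion inside $S$: it counts children on the graph side, noting that $\theta_{k_{\sigma}}(x) = x_r$ is the quadratic $x^2 - \frac{x_r}{k_{\sigma}}x + 1 = 0$, whose discriminant vanishes if and only if $x_r = \pm 2k_{\sigma} = \pm i_p$, so that only $\infty$, $i_p$, $-i_p$ can have a single child; and it shows every vertex at level $r < e_0$ has a child by writing down an explicit $[\rho_0]$-preimage, using the binary representation of $R/\rho_0^{e_0}R$ from \cite{gil}. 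You instead do both steps inside $S$: two $[\rho_0]$-preimages share an $x$-coordinate iff $2Q' = K$, and since $2$ equals $\rho_0^2$ up to a unit this is triggered exactly at the vertices whose $2$-primary coordinate has annihilator order $\rho_0^2$, while for roots different from $\infty$ the nonvanishing odd invariants $h_i$ exclude collapses altogether; depth and existence of preimages you get from valuations rather than explicit constructions. Your route buys a conceptual explanation of \emph{why} the exceptional vertices are exactly the root and $\pm i_p$ (namely the identity $2 = u\rho_0^2$), uniformly over $A_n$ and $B_n$; the paper's route is more elementary and works verbatim in the one configuration where your $S$-argument does not literally apply: when $n$ is odd and $p \equiv \pm 3 \pmod 8$, the children $\pm 1$ of $\pm i_p$ lie in $C_n$, hence outside $E(\F_{p^n})$ and invisible in $S \cong R/(\pi_p^n - 1)R$ (there in fact $e_0 = 2$ and $\pm i_p$ have no $[\rho_0]$-preimages at all, so the collapse criterion is vacuous); in that case the single-child property of $\pm i_p$ must come from the graph-side facts you also record ($\theta_{k_{\sigma}}(\pm 1) = \pm i_p$, $\theta_{k_{\sigma}}(B_n) \subseteq B_n$, $C_n = \{1,-1\}$) rather than from the criterion. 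Since you explicitly flag this $A_n/B_n/C_n$ bookkeeping and all the needed facts appear in your write-up, this is a presentational blur rather than a gap, but in a polished version you should separate the two mechanisms: the $S$-collapse argument for the case $\pm 1 \in A_n$, and the $C_n$ analysis for the case $n$ odd, $p \equiv \pm 3 \pmod 8$.
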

\begin{proof}
At first we notice that any vertex of the tree rooted in $x_0$ has at most two children. In fact, consider a vertex $x_r$ at the level $r \geq 0$ of such a tree. If $x_r = \infty$, then $\theta_{k_{\sigma}} (x) = x_r$ if and only if $x \in \{0, \infty \}$. Since $\infty$ is $\theta_{k_{\sigma}}$-periodic, it follows that $\infty$ has exactly one child. If $x_r \not = \infty$, then  $\theta_{k_{\sigma}} (x)  = x_r$ if and only if $x^2 - \frac{x_r}{k_{\sigma}}x+1=0$. This quadratic equation has exactly one root if its discriminant $\frac{x_r^2}{k_{\sigma}^2}-4$ is zero. This happens if and only if $x_r = \pm 2 k_{\sigma}$. Since $(\pm 2 k_{\sigma})^2 = -1$, we deduce that the discriminant is zero if and only if $x_r \in \{ i_p, - i_p \}$. We notice in passing that $i_p$ and $-i_p$ have one child each, namely $1$ and $-1$. Moreover, $\theta_{k_{\sigma}} (\pm i_p) = 0$ and $\theta_{k_{\sigma}} (0) = \infty$. Therefore we can conclude that, provided that $x_0 \not = \infty$, the root $x_0$ has exactly one child, while all other vertices have zero or two distinct children. If $x_0 = \infty$, then $x_0$ has exactly one child, any other vertex $x_r$ of the tree has zero or two distinct children, unless $x_r$ is one of the two vertices at the level $2$ of the tree having exactly one child each. 

According to Lemma \ref{case2_lem_1_2} the map $\theta_{k_{\sigma}}$ takes the elements of $A_n$ to $A_n$, the elements of $B_n$ to $B_n$ and the elements of $C_n$ to $A_n$. The elements of $C_n$, namely $1$ and $-1$, are the only two vertices belonging to the level $3$ of the tree rooted in $x_0 = \infty$ in the case that $n$ is odd and $p \equiv \pm 3 \pmod{8}$. Under such hypotheses $1$ and $-1$ are also the leaves of the tree rooted in $\infty$.
In all other circumstances, if $x_0 \in V_{A_n}$ (resp. $V_{B_n}$), then $x_0$ is root of a tree whose vertices belong to $A_n$ (resp. $B_n$) and whose depth is $e_0$.    
In fact, $x_0 = \infty$ or is the $x$-coordinate of a point $(x_0, y_0) \in E(\F_{p^n})$ (resp. $E(\F_{p^{2n}})_{B_n}$). Let $P=(0, P_1, \dots, P_w)$ be the image of the point at infinity or $(x_0,y_0)$ in $S = \displaystyle\prod_{i=0}^w R / \rho_i^{e_i} R$. If $Q = (Q_0, Q_1, \dots, Q_w)$ is any point of $S$, then $[\rho_0]^{e_0} Q_0 = 0$ in $R / \rho_0^{e_0} R$. Therefore, $[\rho_0]^{e_0} Q$ is the image in $S$ of a rational point of $E(\F_{p^n})$ (resp. $E(\F_{p^{2n}})_{B_n}$) whose $x$-coordinate is $\theta_{k_{\sigma}}$-periodic. This fact implies that $x_0$ cannot be the root of a tree of depth greater than $e_0$. 
Suppose now that
\begin{eqnarray*}
Q_0 & = & [1];\\
Q_i & = & [\rho_0]^{-e_0} P_i, \text{for any $1 \leq i \leq w$}.
\end{eqnarray*}
Then, $[\rho_0]^r Q_0 \not = 0$ in $R/ \rho_0^{e_0} R$ for any $0 \leq r < e_0$. Nonetheless, $[\rho_0]^{e_0} Q_0 = 0$ in $R / \rho_0^{e_0} R$ and $[\rho_0]^{e_0} Q_i = P_i$ for any $1 \leq i \leq w$. Therefore, $[\rho_0]^{e_0} Q = P$. If $Q$ is the image in $S$ of a rational point $(\tilde{x}, \tilde{y}) \in E(\F_{p^n})$ (resp. $E({\F_{p^{2n}}})_{B_n}$), then $\theta_{k_{\sigma}}^{e_0} (\tilde{x}) = x_0$ and $\theta_{k_{\sigma}}^r (\tilde{x})$ is not $\theta_{k_{\sigma}}$-periodic for any $0 \leq r < e_0$, namely $\tilde{x}$ belongs to the level $e_0$ of the tree rooted in $x_0$. 

Consider now a vertex $x_r \not = \infty$ at the level $r < e_0$ of the tree rooted in $x_0$. Then, $(x_r, y_r) \in E(\F_{p^n})$ (resp. $E(\F_{p^{2n}})_{B_n}$) for some $y_r \in \F_{p^{n}}$ (resp. $\F_{p^{2n}} \backslash \F_{p^n}$). If $Q = Q_{(x_r, y_r)} = (Q_0, Q_1, \dots, Q_w) \in S$, then 
\begin{equation*}
Q_0 = [\rho_0]^{e_0-r} + \sum_{i=e_0-r+1}^{e_0-1} \delta_i \cdot [\rho_0]^i
\end{equation*} 
for some choice of the coefficients $\delta_i \in \{0, 1 \}$. Now we prove that there exists at least a rational point $(x_{r+1}, y_{r+1}) \in E(\F_{p^n})$ (resp. $E(\F_{p^{2n}})_{B_n}$) such that $\theta_{k_{\sigma}} (x_{r+1}) = x_r$. Consider in fact the point $\tilde{Q} = \tilde{Q}_{(x_{r+1}, y_{r+1})}= (\tilde{Q}_0, \tilde{Q}_1, \dots, \tilde{Q}_w) \in S$, where
\begin{eqnarray*}
\tilde{Q}_0 & = & [\rho_0]^{e_0-r-1} + \sum_{i=e_0-r}^{e_0-2} \delta_{i+1} \cdot [\rho_0]^i;\\
\tilde{Q}_i & = & [\rho_0]^{-1} Q_i, \quad \text{for $1 \leq i \leq w$.} 
\end{eqnarray*}
Since $[\rho_0] \tilde{Q} = Q$ we are done. Finally, $\infty$ has exactly one child. 

All considered, we have proved that any vertex $x_r$ at the level $r < e_0$ of the tree rooted in a vertex $x_0$ of $V_{A_n}$ (resp. $V_{B_n}$) has exactly two children, unless $x_r \in \{x_0, i_p, - i_p \}$, in which case $x_r$ has exactly one child.
\end{proof}

We conclude this section with one example.

\begin{example} 
Let $p=53$. Then, the two roots in $\F_p$ of the equation
\begin{equation*}
x^2-2x+2=0
\end{equation*}
are
\begin{equation*}
\alpha_{\omega} = 24 \text{ and } \alpha_{\overline{\omega}} = 31.
\end{equation*}
Consequently,
\begin{equation*}
k_{\omega} = 15 \text{ and } k_{\overline{\omega}} = 38.
\end{equation*}

In the following we will describe thoroughly the structure of the graphs $G_{\theta_{15}}^{53}$ and $G_{\theta_{38}}^{53}$. At first we notice that $|E(\F_{53})| = 68$ and that the representation of the Frobenius endomorphism $\pi_{53}$ as an element of $R$ is
\begin{equation*}
\pi_{53} = -7+2i.
\end{equation*}

We deal firstly with the graph $G_{\theta_{15}}^{53}$. Since $\overline{\alpha}^{-2} \equiv 15 \pmod{\pi_{53}}$, we define $\rho_0 = \overline{\alpha}$ and study the iterations of the map $\theta_{15}$ on $A_1$ by means of the iterations of $[\rho_0]$ on $S \cong R / (\pi_{53}-1) R$. We have that
\begin{equation*}
S = R / \rho_0^2 R \times R / \rho_1 R,
\end{equation*}
where $\rho_1 = -1-4i$. Since $e_0 = 2$, the depth of the trees rooted in elements of $A_1 \backslash \{ \infty \}$ is $2$, while the depth of the tree rooted in $\infty$ is $3$, because $p \equiv -3 \pmod{8}$ and $n=1$. The $\theta_{15}$-periodic elements in $A_1$ are $x$-coordinates of the points $P=(0,P_1) \in S$. In $R/\rho_1 R$ there are $16$ points of additive order $17$ and $1$ point of additive order $1$, namely $P_1 = 0$. In this latter case $P=(0,0)$, namely $P$ is the point at infinity, which gives rise to one cycle of length $1$. In the former case the $16$ points give rise to a cycle of length $8$, since $8$ is the smallest among the positive integers $s$ such that $\rho_1$ divides either $\rho_0^s+1$ or $\rho_0^s-1$.

The iterations of $\theta_{15}$ on $B_1$ can be studied by means of the iterations of $[\rho_0]$ on $S \cong R / (\pi_{53}+1) R$. We have that
\begin{equation*}
S = R / \rho_0^3 R \times R / \rho_1 R,
\end{equation*}
where $\rho_1 = 1-2i$. Since $e_0 = 3$, the depth of the trees rooted in elements of $B_1$ is $3$. The $\theta_{15}$-periodic elements in $B_1$ are $x$-coordinates of the points $P=(0,P_1) \in S$, where $P_1 \not = 0$. The $4$ points $P_1 \not = 0$ in $R/\rho_1 R$ have additive order $5$. They give rise to one cycle of length $2$, since $2$ is the smallest among the positive integers $s$ such that $\rho_1$ divides either $\rho_0^s+1$ or $\rho_0^s-1$. 

The graph $G_{\theta_{15}}^{53}$ is represented below. We notice that the vertex labels, different from $\infty$ and `0' (the zero in $\F_{53}$), refer to the exponents of the powers $\gamma^i$ for $0 \leq i \leq 51$, being $\gamma$ the root of the Conway polynomial $x-2 \in \F_{53} [x]$. 

The following $2$ connected components are due to the elements of $A_1$. 
\begin{center}
\begin{picture}(90, 90)(-28,-45)
	\unitlength=2.8pt
    \gasset{Nw=4,Nh=4,Nmr=3,curvedepth=0}
    \thinlines
   \footnotesize
    \node(A1)(15,0){24}
    \node(A2)(10.6,10.6){46}
    \node(A3)(0,15){10}
    \node(A4)(-10.6,10.6){44}
    \node(A5)(-15,0){50}
    \node(A6)(-10.6,-10.6){20}
    \node(A7)(0,-15){36}
    \node(A8)(10.6,-10.6){18}
    
    \node(B1)(30,0){34}
    \node(B2)(21.2,21.2){28}
    \node(B3)(0,30){6}
    \node(B4)(-21.2,21.2){42}
    \node(B5)(-30,0){8}
    \node(B6)(-21.2,-21.2){2}
    \node(B7)(0,-30){32}
    \node(B8)(21.2,-21.2){16}
    
    \node(C12)(44.1,8.8){33}
    \node(C21)(37.4,25){3}
    \node(C22)(25,37.4){49}
    \node(C31)(8.8,44.1){1}
    \node(C32)(-8.8,44.1){51}
    \node(C41)(-25,37.4){5}
    \node(C42)(-37.4,25){47}
    \node(C51)(-44.1,8.8){7}
    \node(C52)(-44.1,-8.8){45}
    \node(C61)(-37.4,-25){23}
    \node(C62)(-25,-37.4){29}
    \node(C71)(-8.8,-44.1){25}
    \node(C72)(8.8,-44.1){27}
    \node(C81)(25,-37.4){21}
    \node(C82)(37.4,-25){31}
    \node(C11)(44.1,-8.8){19}
 
    \drawedge(A1,A2){}
    \drawedge(A2,A3){}
    \drawedge(A3,A4){}
    \drawedge(A4,A5){}
    \drawedge(A5,A6){}
    \drawedge(A6,A7){}
    \drawedge(A7,A8){}
    \drawedge(A8,A1){}
    
    \drawedge(B1,A1){}
    \drawedge(B2,A2){}
    \drawedge(B3,A3){}
    \drawedge(B4,A4){}
    \drawedge(B5,A5){}
    \drawedge(B6,A6){}
    \drawedge(B7,A7){}
    \drawedge(B8,A8){}
    
    \drawedge(C11,B1){}
    \drawedge(C12,B1){}
    \drawedge(C21,B2){}
    \drawedge(C22,B2){}
    \drawedge(C31,B3){}
    \drawedge(C32,B3){}
    \drawedge(C41,B4){}
    \drawedge(C42,B4){}
    \drawedge(C51,B5){}
    \drawedge(C52,B5){}
    \drawedge(C61,B6){}
    \drawedge(C62,B6){}
    \drawedge(C71,B7){}
    \drawedge(C72,B7){}
    \drawedge(C81,B8){}
    \drawedge(C82,B8){}

    \node(D1)(70,-15){$\infty$}
    \node(E1)(70,0){`0'}
    \node(F1)(62,15){13}
    \node(F2)(78,15){39}
    \node(G1)(62,30){0}
    \node(G2)(78,30){26}
    
    \drawedge(E1,D1){}
    \drawedge(F1,E1){}
    \drawedge(F2,E1){}
    \drawedge(G1,F1){}
    \drawedge(G2,F2){}
    
    \drawloop[loopangle=-90](D1){}
\end{picture}
\end{center}

The following connected component is due to the elements of $B_1$.
\begin{center}
\begin{picture}(90, 40)(-45,-20)
	\unitlength=2.8pt
    \gasset{Nw=4,Nh=4,Nmr=3,curvedepth=0}
    \thinlines
   \footnotesize
    \node(A1)(8,0){17}
    \node(A2)(-8,0){43}
    
    \node(B1)(24,0){9}
    \node(B2)(-24,0){35}
    
    \node(C11)(40,8){11}
    \node(C12)(40,-8){41}
    \node(C21)(-40,8){37}
    \node(C22)(-40,-8){15}
    
    \node(D111)(56,12){22}
    \node(D112)(56,4){30}
    \node(D121)(56,-4){14}
    \node(D122)(56,-12){38}
    
    \node(D211)(-56,12){4}
    \node(D212)(-56,4){48}
    \node(D221)(-56,-4){12}
    \node(D222)(-56,-12){40}

	\gasset{curvedepth=2}
    \drawedge(A1,A2){}
    \drawedge(A2,A1){}
    
    \gasset{curvedepth=0}
    \drawedge(B1,A1){}
    \drawedge(B2,A2){}
    \drawedge(C11,B1){}
    \drawedge(C12,B1){}
    \drawedge(C21,B2){}
    \drawedge(C22,B2){}
    
    \drawedge(D111,C11){}
    \drawedge(D112,C11){}
    \drawedge(D121,C12){}
    \drawedge(D122,C12){}
    \drawedge(D211,C21){}
    \drawedge(D212,C21){}
    \drawedge(D221,C22){}
    \drawedge(D222,C22){}
\end{picture}
\end{center}

We deal now with the graph $G_{\theta_{38}}^{53}$. Since ${\alpha}^{-2} \equiv 38 \pmod{\pi_{53}}$, we define $\rho_0 = {\alpha}$ and study the iterations of the map $\theta_{38}$ on $A_1$ by means of the iterations of $[\rho_0]$ on $S \cong R / (\pi_{53}-1) R$. We have that
\begin{equation*}
S = R / \rho_0^2 R \times R / \rho_1 R,
\end{equation*}
where $\rho_1 = 1+4i$. The length of the cycles due to the elements of $A_1$ and the depth of the trees attached to $\theta_{38}$-periodic elements can be found as explained in the case of the map $\theta_{15}$.

The iterations of $\theta_{38}$ on $B_1$ can be studied by means of the iterations of $[\rho_0]$ on $S \cong R / (\pi_{53}+1) R$. We have that
\begin{equation*}
S = R / \rho_0^3 R \times R / \rho_1 R,
\end{equation*}
where $\rho_1 = 2+i$. Since $e_0 = 3$, the depth of the trees rooted in elements of $B_1$ is $3$. The $\theta_{38}$-periodic elements in $B_1$ are $x$-coordinates of the points $P=(0,P_1) \in S$, where $P_1 \not = 0$. The $4$ points $P_1 \not = 0$ in $R/\rho_1 R$ have additive order $5$. They give rise to $2$ cycles of length $1$, since $1$ is the smallest among the positive integers $s$ such that $\rho_1$ divides either $\rho_0^s+1$ or $\rho_0^s-1$. 

The graph $G_{\theta_{38}}^{53}$ is here represented. The following $2$ connected components are due to the elements of $A_1$. 
\begin{center}
\begin{picture}(90, 90)(-28,-45)
	\unitlength=2.8pt
    \gasset{Nw=4,Nh=4,Nmr=3,curvedepth=0}
    \thinlines
   	\footnotesize
    \node(A1)(15,0){24}
    \node(A2)(10.6,10.6){20}
    \node(A3)(0,15){10}
    \node(A4)(-10.6,10.6){18}
    \node(A5)(-15,0){50}
    \node(A6)(-10.6,-10.6){46}
    \node(A7)(0,-15){36}
    \node(A8)(10.6,-10.6){44}
    
    \node(B1)(30,0){8}
    \node(B2)(21.2,21.2){28}
    \node(B3)(0,30){32}
    \node(B4)(-21.2,21.2){42}
    \node(B5)(-30,0){34}
    \node(B6)(-21.2,-21.2){2}
    \node(B7)(0,-30){6}
    \node(B8)(21.2,-21.2){16}
    
    \node(C12)(44.1,8.8){33}
    \node(C21)(37.4,25){23}
    \node(C22)(25,37.4){29}
    \node(C31)(8.8,44.1){1}
    \node(C32)(-8.8,44.1){51}
    \node(C41)(-25,37.4){21}
    \node(C42)(-37.4,25){31}
    \node(C51)(-44.1,8.8){7}
    \node(C52)(-44.1,-8.8){45}
    \node(C61)(-37.4,-25){3}
    \node(C62)(-25,-37.4){49}
    \node(C71)(-8.8,-44.1){25}
    \node(C72)(8.8,-44.1){27}
    \node(C81)(25,-37.4){5}
    \node(C82)(37.4,-25){47}
    \node(C11)(44.1,-8.8){19}
 
    \drawedge(A1,A2){}
    \drawedge(A2,A3){}
    \drawedge(A3,A4){}
    \drawedge(A4,A5){}
    \drawedge(A5,A6){}
    \drawedge(A6,A7){}
    \drawedge(A7,A8){}
    \drawedge(A8,A1){}
    
    \drawedge(B1,A1){}
    \drawedge(B2,A2){}
    \drawedge(B3,A3){}
    \drawedge(B4,A4){}
    \drawedge(B5,A5){}
    \drawedge(B6,A6){}
    \drawedge(B7,A7){}
    \drawedge(B8,A8){}
    
    \drawedge(C11,B1){}
    \drawedge(C12,B1){}
    \drawedge(C21,B2){}
    \drawedge(C22,B2){}
    \drawedge(C31,B3){}
    \drawedge(C32,B3){}
    \drawedge(C41,B4){}
    \drawedge(C42,B4){}
    \drawedge(C51,B5){}
    \drawedge(C52,B5){}
    \drawedge(C61,B6){}
    \drawedge(C62,B6){}
    \drawedge(C71,B7){}
    \drawedge(C72,B7){}
    \drawedge(C81,B8){}
    \drawedge(C82,B8){}

    \node(D1)(70,-15){$\infty$}
    \node(E1)(70,0){`0'}
    \node(F1)(62,15){39}
    \node(F2)(78,15){13}
    \node(G1)(62,30){0}
    \node(G2)(78,30){26}
    
    \drawedge(E1,D1){}
    \drawedge(F1,E1){}
    \drawedge(F2,E1){}
    \drawedge(G1,F1){}
    \drawedge(G2,F2){}
    
    \drawloop[loopangle=-90](D1){}
\end{picture}
\end{center}

The following $2$ connected components are due to the elements of $B_1$.
\begin{center}
\begin{picture}(90, 60)(-45,-7)
	\unitlength=2.8pt
    \gasset{Nw=4,Nh=4,Nmr=3,curvedepth=0}
    \thinlines
   \footnotesize
    \node(A1)(24,0){43}
    \node(A2)(-24,0){17}
    
    \node(B1)(24,15){9}
    \node(B2)(-24,15){35}
    
    \node(C11)(16,30){15}
    \node(C12)(32,30){37}
    \node(C21)(-16,30){41}
    \node(C22)(-32,30){11}
    
    \node(D111)(12,45){14}
    \node(D112)(20,45){38}
    \node(D121)(28,45){22}
    \node(D122)(36,45){30}
    
    \node(D211)(-12,45){40}
    \node(D212)(-20,45){12}
    \node(D221)(-28,45){48}
    \node(D222)(-36,45){4}

    \drawedge(B1,A1){}
    \drawedge(B2,A2){}

    \drawedge(C11,B1){}
    \drawedge(C12,B1){}
    \drawedge(C21,B2){}
    \drawedge(C22,B2){}
	   
	\drawedge(D111,C11){}
    \drawedge(D112,C11){}
    \drawedge(D121,C12){}
    \drawedge(D122,C12){}
    \drawedge(D211,C21){}
    \drawedge(D212,C21){}
    \drawedge(D221,C22){}
    \drawedge(D222,C22){}	

	\drawloop[loopangle=-90](A1){}
	\drawloop[loopangle=-90](A2){}
\end{picture}
\end{center}

\end{example}

\section{Case 3: $k$ is a root of $x^2+ \frac{1}{2}x+\frac{1}{2} \in \F_p [x]$ with $p \equiv 1, 2,$ or $4 \pmod{7}$}
The endomorphism ring of the elliptic curve with equation $y^2 = x^3 - 35 x + 98$ over $\Q$ is isomorphic to $\Z \left[ \alpha \right]$, where $\alpha= \frac{1+ \sqrt{-7}}{2}$, as one can deduce from \cite{sil}, Proposition 2.3.1 (iii). In particular, the curve possesses an endomorphism of degree $2$, namely the map $[\alpha]$, which takes a point $(x,y)$ of the curve to
\begin{equation*}
[\alpha] (x,y) = \left(\alpha^{-2} \left(x - \frac{7 (1- \alpha)^4}{x + \alpha^2-2} \right), \alpha^{-3} y \left(1 + \frac{7 (1- \alpha)^4}{(x+ \alpha^2-2)^2} \right)  \right).
\end{equation*}

If $p$ is an odd prime such that
\begin{equation}\label{case3_eq2}
p \equiv 1, 2 \text{ or } 4 \pmod{7},
\end{equation}
then the elliptic curve with equation $y^2 = x^3 - 35 x + 98$ has good reduction modulo $p$ (see \cite{sil_a}, Chapter V, Proposition 5.1 or \cite{mil}, page 59). Hence, from now on we suppose that $p$ is a fixed prime number as in (\ref{case3_eq2}) and we denote by $E$ the elliptic curve with equation 
\begin{equation*}
y^2 = x^3 - 35 x + 98
\end{equation*}
over $\F_p$.
Being $-7$ a quadratic residue in $\F_p$ we also get that
\begin{equation}\label{case3_eq1}
x^2-x+2 = 0
\end{equation}
has two solutions $\omega, \overline{\omega}$ in $\F_p$.

Fixed a positive integer $n$ we want to study the iterations over $\Pro (\F_{p^n})$ of the maps $\theta_{k_{\sigma}}$, for $\sigma \in \{\omega, \overline{\omega} \}$, where
\begin{equation}
k_{\omega} \equiv \frac{\omega-1}{2} \pmod{p}, \quad k_{\overline{\omega}} \equiv \frac{\overline{\omega}-1}{2} \pmod{p}.
\end{equation}

Firstly we notice that $k_{\omega}$ and $k_{\overline{\omega}}$ are the two roots in $\F_p$ of the quadratic equation
\begin{equation}
x^2 + \frac{1}{2} x + \frac{1}{2} = 0.
\end{equation}

Now we show that, for $\sigma \in \{\omega, \overline{\omega} \}$, the map $\theta_{k_{\sigma}}$ is conjugated to the  map $\eta_{k_{\sigma}}$ defined over $\Pro(\F_{p^n})$ as follows: 
\begin{displaymath}
\eta_{k_{\sigma}} (x) = 
\begin{cases}
\infty & \text{if $x = - \sigma^2+2$ or $\infty$,}\\
\frac{1}{{\sigma}^2} \cdot \left( x - \frac{7 \cdot (1-\sigma)^4}{x+\sigma^2-2} \right) & \text{otherwise}.
\end{cases}
\end{displaymath} 
Before proving this fact, we notice that the two maps $\eta_{k_{\sigma}}$, for $\sigma \in \{\omega, \overline{\omega} \}$, are involved in the definition of two endomorphisms of the curve $E$, namely the maps $e_{k_{\sigma}}$ which take a rational point $(x,y)$ in $E (\F_{p^n})$ to
\begin{eqnarray*}
e_{k_{\sigma}} (x,y) = \left(\eta_{k_{\sigma}}(x) , \frac{1}{\sigma^3} \cdot y \cdot \left(1 + \frac{7 \cdot (1- \sigma)^4}{(x+ \sigma^2-2)^2} \right)  \right).
\end{eqnarray*} 
The fact that any map $\theta_{k_{\sigma}}$ is conjugated to the respective map $\eta_{k_{\sigma}}$ enables us to study the iterations of the former maps relying upon the action of the endomorphisms $e_{k_{\sigma}}$ on the rational points of the curve $E$. 

With the aim to prove the conjugation between the maps $\theta_{k_{\sigma}}$ and $\eta_{k_{\sigma}}$, for $\sigma$ equal to $\omega$ or $\overline{\omega}$ respectively, we set
\begin{equation*}
b_{\sigma} \equiv k_{\sigma} + \frac{5}{2} \pmod{p}, \quad c_{\sigma} \equiv - \frac{1}{4 k_{\sigma} +1} \pmod{p}, \quad d_{\sigma} \equiv \frac{\frac{1}{2}+k_{\sigma}}{4k_{\sigma}+1} \pmod{p}
\end{equation*}
and define two bijective maps $\chi_{k_{\sigma}}$ on $\Pro (\F_{p^n})$ in such a way:
\begin{displaymath}
\chi_{k_{\sigma}} : 
\begin{array}{lll}
x & \mapsto & 
\begin{cases}
\frac{1}{c_{\sigma}} & \text{if $x = \infty$},\\
\infty & \text{if $x = - \frac{d_{\sigma}}{c_{\sigma}}$},\\
\frac{x+b_{\sigma}}{c_{\sigma} x+d_{\sigma}} & \text{in all other cases}.
\end{cases}
\end{array}
\end{displaymath}
The inverses of the maps $\chi_{k_{\sigma}}$ are
\begin{displaymath}
\chi_{k_{\sigma}}^{-1} : 
\begin{array}{lll}
x & \mapsto & 
\begin{cases}
\infty & \text{if $x = \frac{1}{c_{\sigma}}$},\\
- \frac{d_{\sigma}}{c_{\sigma}} & \text{if $x = \infty$},\\
\frac{d_{\sigma} x-b_{\sigma}}{-c_{\sigma} x+1} & \text{in all other cases}.
\end{cases}
\end{array}
\end{displaymath}

With the notation just introduced we can prove the following technical result.
\begin{lemma}\label{case3_lem_2}
For any $x \in \Pro(\F_{p^n})$,
\begin{equation}
\chi_{k_{\sigma}}^{-1} \circ \eta_{k_{\sigma}} \circ \chi_{k_{\sigma}} (x) = \theta_{k_{\sigma}} (x). 
\end{equation}
\end{lemma}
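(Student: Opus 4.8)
The statement is an identity between two rational self-maps of $\Pro(\F_{p^n})$, so the natural plan is a direct verification, separating the generic locus from the finitely many exceptional points at which one of the three maps takes (or produces) the value $\infty$.

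First I would record the algebraic relations that drive every simplification. From $k_\sigma = \frac{\sigma-1}{2}$ one gets $\sigma = 2k_\sigma + 1$ and hence $4k_\sigma + 1 = 2\sigma - 1$; feeding this into the definitions gives $b_\sigma = \frac{\sigma+4}{2}$, $c_\sigma = -\frac{1}{2\sigma-1}$ and $d_\sigma = \frac{\sigma}{2(2\sigma-1)}$. Moreover $\sigma$ is a root of $x^2 - x + 2$, so $\sigma^2 = \sigma - 2$; in particular the pole of $\eta_{k_\sigma}$ sits at $x = -\sigma^2 + 2 = 4 - \sigma$, and every higher power of $\sigma$ produced by the composition can be reduced to the linear form $a + b\sigma$ with $a,b \in \F_p$. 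These identities, together with the quadratic relation $k_\sigma^2 = -\frac12 k_\sigma - \frac12$, are exactly what is needed to collapse the constants $7(1-\sigma)^4$ and $\sigma^2 - 2$ after substitution.

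On the generic locus I would treat all three maps as honest rational functions and evaluate the composition from the inside out. Writing $u = \chi_{k_\sigma}(x) = \frac{x + b_\sigma}{c_\sigma x + d_\sigma}$, I substitute $u$ into $\eta_{k_\sigma}(u) = \frac{1}{\sigma^2}\bigl(u - \frac{7(1-\sigma)^4}{u + \sigma^2 - 2}\bigr)$, clear denominators, and reduce all occurrences of $\sigma^2, \sigma^3, \dots$ via $\sigma^2 = \sigma - 2$, so that numerator and denominator become linear in $\sigma$ with coefficients that are rational functions of $x$. Applying $\chi_{k_\sigma}^{-1}(v) = \frac{d_\sigma v - b_\sigma}{-c_\sigma v + 1}$ to the result and simplifying once more, the target is to land on exactly $k_\sigma \cdot \frac{x^2+1}{x} = \theta_{k_\sigma}(x)$. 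Since $\chi_{k_\sigma}$ is a M\"obius map while $\eta_{k_\sigma}$ and $\theta_{k_\sigma}$ both have degree $2$, conjugation preserves degree, so the degrees match a priori; this is a useful consistency check during the reduction.

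Finally I would dispose of the exceptional points by hand: $x \in \{0, \infty\}$ on the $\theta_{k_\sigma}$ side, together with the preimages under $\chi_{k_\sigma}$ of the points $\{\infty,\, -\sigma^2+2\}$ where $\eta_{k_\sigma}$ outputs $\infty$, and the point $\frac{1}{c_\sigma}$ where $\chi_{k_\sigma}^{-1}$ outputs $\infty$. For each such point one checks directly that the two sides agree with the piecewise definitions, which also confirms that the generic rational identity extends correctly across the poles. The main obstacle is not conceptual but the bookkeeping of the generic simplification: the factor $7(1-\sigma)^4$ and the nested M\"obius substitutions produce a bulky rational expression, and the whole computation hinges on systematically applying $\sigma^2 = \sigma - 2$ and $\sigma = 2k_\sigma + 1$ to force the cancellation down to $k_\sigma(x + x^{-1})$.
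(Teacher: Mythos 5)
Your proposal is correct and follows essentially the same route as the paper: a direct inside-out computation of the composition in the rational function field $\F_p(x)$, collapsing everything via the quadratic relations (the paper reduces to $k_\sigma$ using $\sigma^2 = 2k_\sigma - 1$ and $k_\sigma^2 = -\tfrac12 k_\sigma - \tfrac12$, where you keep expressions linear in $\sigma$ via $\sigma^2 = \sigma - 2$, an equivalent bookkeeping choice). Your explicit check of the exceptional points where some map takes the value $\infty$ is a small added refinement that the paper silently omits, relying on the rational-function identity alone.
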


\begin{proof}
As a first step we notice that $\sigma^2 \equiv 2 k_{\sigma} -1 \pmod{p}$. Therefore, 
\begin{equation*}
\eta_{k_{\sigma}} (x) = \frac{1}{2 k_{\sigma}-1} \cdot \left(x - \frac{14 \cdot (3 k_{\sigma}+1)}{x+ 2 k_{\sigma} - 3} \right).
\end{equation*}
Taking into account the fact that $k_{\sigma}^2 \equiv -\frac{1}{2} k_{\sigma} - \frac{1}{2} \pmod{p}$, we have that in $\F_p (x)$
\begin{equation*}
\eta_{k_{\sigma}} \circ \chi_{k_{\sigma}} (x) = \frac{(56 k_{\sigma}+28) \cdot x^2 + (-84 k_{\sigma} + 98) \cdot x + (56 k_{\sigma} +28)}{(8 k_{\sigma} - 12) \cdot x^2 + (-20 k_{\sigma} +2) \cdot x + (8 k_{\sigma} -12)}.
\end{equation*}
Since in $\F_p (x)$
\begin{equation*}
\chi_{k_{\sigma}}^{-1} \circ \eta_{k_{\sigma}} (x) = \frac{(42k_{\sigma}+7) \cdot x^2 + (42 k_{\sigma}+7)}{(-14 k_{\sigma} + 35) x} \equiv k_{\sigma} \cdot \frac{x^2+1}{x},
\end{equation*}
we get the thesis.
\end{proof}

According to Lemma \ref{case3_lem_2} any of the two maps $\theta_{k_{\sigma}}$ is conjugated to the respective map $\eta_{k_{\sigma}}$. Therefore, since the graphs $G_{\theta_{k_{\sigma}}}^{p^n}$ are isomorphic to the graphs $G_{\eta_{k_{\sigma}}}^{p^n}$, we will concentrate on the study of these latter graphs.

\subsection{Structure of the graphs $G_{\eta_{k_{\sigma}}}^{p^n}$}
We remind that the endomorphism ring of the curve $y^2 = x^3 - 35 x + 98$ over $\Q$ is isomorphic to $\Z \left[ \alpha \right]$. Therefore, according to \cite{lan}, Chapter 13, Theorem 12, the endomorphism ring $\End(E)$ of $E$ over $\F_p$ is isomorphic to $R = \Z \left[ \alpha \right]$ too. We notice that $R$ is an Euclidean ring with Euclidean function
\begin{equation*}
N(a + b \alpha) = (a+ b \alpha) \cdot \overline{(a + b \alpha)},
\end{equation*} 
for any arbitrary choice of $a, b$ in $\Z$ (here $\overline{(\cdot)}$ denotes the complex conjugation).

By \cite{wit}, Theorem 2.3(a), there exists an isomorphism
\begin{equation*}
\psi_n : E(\F_{p^n}) \to R/(\pi_p^n-1) R,
\end{equation*} 
where $\pi_p$ is the Frobenius endomorphism. We remind that, by \cite{wit}, Theorem 2.4, the representation of $\pi_p$ as an element of $R$ is 
\begin{equation*}
\pi_p = \frac{p+1-m+\sqrt{d}}{2},
\end{equation*}
where
\begin{eqnarray*}
m & = & |E(\F_p)|,\\
d & = & (p+1-m)^2-4p. 
\end{eqnarray*}
Moreover,
\begin{equation*}
2 = e_{k_{\omega}} \circ e_{k_{\overline{\omega}}},
\end{equation*}
being $2$ the duplication map over the curve $E$. Since in $R$
\begin{equation*}
2 = \alpha \cdot \overline{\alpha},
\end{equation*}
we get that the endomorphisms $e_{k_{\omega}}$ and $e_{k_{\overline{\omega}}}$ are represented in $R$ by $\alpha$ and $\overline{\alpha}$. 

Let $\sigma \in \left\{ \omega, \overline{\omega} \right\}$ and fix once for the remaining part of the current section $\sigma= \omega$ or $\sigma = \overline{\omega}$.
Now we partition the elements of $\Pro(\F_{p^n})$ in two subsets:
\begin{eqnarray*}
A_n & = & \{x \in \F_{p^n} : (x, y) \in E(\F_{p^n}) \text{ for some $y \in \F_{p^n}$} \} \cup \{ \infty \},\\
B_n & = & \{x \in \F_{p^n} : (x, y) \in E(\F_{p^{2n}}) \text{ for some $y \in \F_{p^{2n}} \backslash \F_{p^n}$} \}.
\end{eqnarray*} 

As a first step in our study we prove some  technical results.
\begin{lemma}\label{case3_lem_4}
Let $\tilde{x} \in \F_{p^n}$. Then, in $E(\F_{p^{2n}})$ there are two distinct rational points, $(\tilde{x}, \tilde{y})$ and $(\tilde{x}, -\tilde{y})$, with such an $x$-coordinate except for 
\begin{equation*}
\tilde{x} \in \{-7, {\sigma}+3, {\overline{\sigma}}+3 \},
\end{equation*}
in which case $\tilde{y} = 0$.
\end{lemma}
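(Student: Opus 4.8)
The plan is to reduce the statement to counting square roots in $\F_{p^{2n}}$ of the quantity $c=\tilde{x}^3-35\tilde{x}+98$, and then to locate the values of $\tilde{x}$ for which $c$ vanishes. Fix $\tilde{x}\in\F_{p^n}$. A point of $E(\F_{p^{2n}})$ with $x$-coordinate $\tilde{x}$ is exactly a pair $(\tilde{x},y)$ with $y\in\F_{p^{2n}}$ satisfying $y^2=c$, where $c=\tilde{x}^3-35\tilde{x}+98\in\F_{p^n}$. Hence the number of such points equals the number of square roots of $c$ in $\F_{p^{2n}}$.

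First I would record the field-theoretic fact that makes the case distinction clean: every element of $\F_{p^n}$ is a square in $\F_{p^{2n}}$. Indeed, for $c\in\F_{p^n}^*$ one has $c^{p^n-1}=1$, and since $(p^{2n}-1)/2=(p^n-1)\cdot(p^n+1)/2$ is a multiple of $p^n-1$ (here $p$ is odd, so $(p^n+1)/2\in\Z$), it follows that $c^{(p^{2n}-1)/2}=(c^{p^n-1})^{(p^n+1)/2}=1$, i.e. $c$ is a square in $\F_{p^{2n}}^*$. Consequently, if $c\neq 0$ then $c$ has exactly two distinct square roots $\pm\tilde{y}$ in $\F_{p^{2n}}$, yielding the two distinct points $(\tilde{x},\tilde{y})$ and $(\tilde{x},-\tilde{y})$; whereas if $c=0$ the only solution is $y=0$, giving the single point with $\tilde{y}=0$.

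It then remains to determine when $c=\tilde{x}^3-35\tilde{x}+98=0$, i.e. to find the roots of the cubic $x^3-35x+98$. I claim this cubic factors as $(x+7)(x-\sigma-3)(x-\overline{\sigma}-3)$. To verify this I would use that $\sigma,\overline{\sigma}$ are the roots of (\ref{case3_eq1}), so $\sigma+\overline{\sigma}=1$ and $\sigma\overline{\sigma}=2$, and then check the three symmetric functions against Vieta's formulas for $x^3-35x+98$. The sum of the three proposed roots is $-7+(\sigma+3)+(\overline{\sigma}+3)=-1+(\sigma+\overline{\sigma})=0$, matching the absent $x^2$ term; using $(\sigma+3)(\overline{\sigma}+3)=\sigma\overline{\sigma}+3(\sigma+\overline{\sigma})+9=14$, the sum of pairwise products is $-7\big[(\sigma+3)+(\overline{\sigma}+3)\big]+14=-49+14=-35$; and the product of the roots is $-7\cdot 14=-98$. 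These match the coefficients of $x^3-35x+98$, so the factorization holds and the roots are precisely $-7,\ \sigma+3,\ \overline{\sigma}+3$. Therefore $c=0$ exactly for $\tilde{x}\in\{-7,\sigma+3,\overline{\sigma}+3\}$, which completes the argument.

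The argument is essentially routine; the only genuinely structural ingredient is the square-root existence in $\F_{p^{2n}}$, and the remaining obstacle is purely computational, namely confirming the factorization of the cubic via Vieta. One small point worth flagging is that, since $\sigma,\overline{\sigma}\in\F_p$, the three exceptional $x$-values lie in $\F_p\subseteq\F_{p^n}$, so they are legitimate exceptions for $\tilde{x}\in\F_{p^n}$, consistent with the statement.
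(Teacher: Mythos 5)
Your proof is correct and follows the same route as the paper's: reduce to counting square roots of $\tilde{x}^3-35\tilde{x}+98$ in $\F_{p^{2n}}$ and identify the roots of that cubic as $-7$, $\sigma+3$, $\overline{\sigma}+3$. The paper's version is simply terser, asserting without proof the two facts you verify explicitly (that every element of $\F_{p^n}$ is a square in $\F_{p^{2n}}$, and the factorization of the cubic via the relations $\sigma+\overline{\sigma}=1$, $\sigma\overline{\sigma}=2$).
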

\begin{proof}
The equation $y^2 = \tilde{x}^3-35 \tilde{x}+98$ has exactly two distinct roots $y_1$ and $y_2$ in $\F_{p^{2n}}$ except in the case that $\tilde{x}$ makes vanish the polynomial $x^3-35x+98$, whose set of roots is $\left\{-7, {\sigma}+3, {\overline{\sigma}}+3 \right\}$.
\end{proof}

\begin{lemma}\label{case3_lem_3}
The following hold.
\begin{itemize}
\item $- \sigma^2 + 2 = \overline{\sigma}+3$ in $\F_p$;
\item $\eta_{k_{\sigma}} (-7) = {\sigma}+3$;
\item $\eta_{k_{\sigma}} ({\overline{\sigma}}+3) = \infty$;
\item $\eta_{k_{\sigma}} (x) = x$ if and only if $x \in \{{\sigma}+3, -2{\sigma}+1, \infty \}$.  
\end{itemize}
\end{lemma}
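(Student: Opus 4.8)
The plan is to reduce every identity to the single relation $\sigma^2 = \sigma - 2$ coming from (\ref{case3_eq1}), together with its Vieta consequences $\overline{\sigma} = 1 - \sigma$ and $\sigma \overline{\sigma} = 2$. First I would record $-\sigma^2 + 2 = -(\sigma - 2) + 2 = 4 - \sigma$ and $\overline{\sigma} + 3 = (1 - \sigma) + 3 = 4 - \sigma$; comparing these proves the first bullet. The third bullet is then immediate, since $\eta_{k_{\sigma}}$ sends $-\sigma^2 + 2$ to $\infty$ by definition and $-\sigma^2 + 2 = \overline{\sigma} + 3$.

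For the remaining two bullets I would precompute $(1 - \sigma)^4 = \overline{\sigma}^4$. Applying $\overline{\sigma}^2 = \overline{\sigma} - 2$ twice gives $\overline{\sigma}^4 = (\overline{\sigma} - 2)^2 = -3\overline{\sigma} + 2 = 3\sigma - 1$. For the second bullet I would then substitute $x = -7$ into the lower branch of $\eta_{k_{\sigma}}$, after noting $-7 \neq \overline{\sigma} + 3$ (this is where $p \not\equiv 0 \pmod 7$ enters). The inner denominator is $-7 + \sigma^2 - 2 = \sigma - 11$, and reducing the resulting rational function of $\sigma$ modulo $\sigma^2 = \sigma - 2$ collapses it to $\frac{7(\sigma - 3)}{3\sigma - 5}$; cross-multiplying against the claimed value $\sigma + 3$ reduces both sides to $7\sigma - 21$, which settles the bullet.

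For the fixed-point statement I would handle $\infty$ separately, it being fixed by definition, and for finite $x \neq \overline{\sigma} + 3$ rewrite $\eta_{k_{\sigma}}(x) = x$ as $(3 - \sigma)\,x\,(x + \sigma - 4) = 7\overline{\sigma}^4 = 21\sigma - 7$. Since $\sigma \neq 3$ (again forced by $p \neq 7$), this is a genuine quadratic, so rather than solving it I would verify by Vieta that $\sigma + 3$ and $-2\sigma + 1$ are its roots: the sum of roots reads off as $4 - \sigma = (\sigma + 3) + (-2\sigma + 1)$ with no computation, while the product identity $\frac{7(1 - 3\sigma)}{3 - \sigma} = (\sigma + 3)(-2\sigma + 1) = 7(1 - \sigma)$ reduces to $(1 - \sigma)(3 - \sigma) = 1 - 3\sigma$, true by $\sigma^2 = \sigma - 2$. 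A final check that $\sigma + 3$, $-2\sigma + 1$ and $\overline{\sigma} + 3$ are pairwise distinct confirms the two finite roots are admissible and distinct, yielding exactly the stated fixed-point set.

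The computations are routine but error-prone because of the repeated reduction of powers of $\sigma$; the one genuine subtlety is bookkeeping the excluded prime. Every degenerate case that could invalidate the argument, namely $-7 = \overline{\sigma} + 3$, $\sigma = 3$, or any coincidence among the three candidate roots, forces $p \mid 7$, which is ruled out by (\ref{case3_eq2}). I expect the main obstacle to be keeping the statement-4 computation clean, and using Vieta's relations rather than the quadratic formula avoids introducing $\sqrt{-7}$ and keeps every identity inside $\F_p(\sigma)$.
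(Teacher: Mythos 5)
Your proposal is correct and takes essentially the same route as the paper: everything is reduced to $\sigma^2 = \sigma - 2$ and $\overline{\sigma} = 1 - \sigma$, bullets 1--3 are settled by direct computation, and bullet 4 by clearing denominators to the same quadratic $(3-\sigma)x^2 + (6\sigma - 10)x - (21\sigma - 7) = 0$ (the paper simply states its roots are $\sigma+3$ and $-2\sigma+1$, where you verify them via Vieta, a nice touch that also keeps track of the degenerate cases the paper leaves implicit). One small slip in your bookkeeping: $\sigma = 3$ would give $0 = \sigma^2 - \sigma + 2 = 8$, hence $p \mid 8$, so this case is excluded because $p$ is odd, not because $p \neq 7$; the conclusion $\sigma \neq 3$ stands, so the argument is unaffected.
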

\begin{proof}
All assertions can be checked by direct computation reminding that 
\begin{equation*}
{\sigma}^2 - {\sigma} + 2 = 0 \quad \text{ and } \quad {\sigma} + {\overline{\sigma}} = 1.
\end{equation*}
Let us begin with the first assertion:
\begin{equation*}
- \sigma^2 + 2 = - \sigma + 4 = \overline{\sigma} + 3.
\end{equation*}

The second assertion can be proved as follows:
\begin{eqnarray*}
\eta_{k_{\sigma}} (-7) & = & \frac{1}{{\sigma}^{2}} \cdot \left( -7 - \frac{7 \cdot (1-{\sigma})^4}{-7+{\sigma}^2-2} \right) = \left( - \frac{1}{4} {\sigma} - \frac{1}{4} \right) \cdot \left(-7 - \frac{21 {\sigma}-7}{{\sigma}-11} \right) \\
& = & \left( - \frac{1}{4} {\sigma} - \frac{1}{4} \right) \cdot \left(-7 - (21 {\sigma} - 7) \cdot \left( - \frac{1}{112} {\sigma} - \frac{5}{56} \right) \right) = {\sigma}+3. 
\end{eqnarray*}

The third assertion follows from the first one and from the definition of $\eta_{k_{\sigma}}$.

Finally, $\eta_{k_{\sigma}} (x) = x$ if and only if $x = \infty$ or 
\begin{equation*}
{\sigma}^2 \cdot (x+{\sigma}^2-2) \cdot \eta_{k_{\sigma}} (x) = {\sigma}^2 \cdot (x+{\sigma}^2-2) \cdot x.
\end{equation*}
After some algebraic manipulations we get that this latter is equivalent to the quadratic equation
\begin{equation*}
(-{\sigma} + 3) \cdot x^2 + (6 {\sigma} - 10) \cdot x - (21 {\sigma} - 7) = 0,
\end{equation*}
whose roots are ${\sigma}+3$ and $-2 {\sigma} +1$.
\end{proof}

\begin{lemma}
The map $\eta_{k_{\sigma}}$ takes the elements of $A_n$ to $A_n$ and the elements of $B_n$ to $B_n$.
\end{lemma}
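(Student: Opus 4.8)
The plan is to exploit, exactly as in the proof of Lemma~\ref{case2_lem_1_2}, the fact that $\eta_{k_{\sigma}}$ is nothing but the first coordinate of the endomorphism $e_{k_{\sigma}}$ of $E$. Since $e_{k_{\sigma}}$ is defined over $\F_p$, it commutes with the Frobenius and hence maps $E(\F_{p^m})$ into itself for every $m$; in particular it preserves both $E(\F_{p^n})$ and $E(\F_{p^{2n}})$. First I would dispose of $A_n$, which is purely formal. If $\tilde{x}=\infty$ then $\eta_{k_{\sigma}}(\tilde{x})=\infty\in A_n$ by the definition of $\eta_{k_{\sigma}}$, while if $\tilde{x}\in A_n\backslash\{\infty\}$ there is $\tilde{y}\in\F_{p^n}$ with $(\tilde{x},\tilde{y})\in E(\F_{p^n})$, whence $e_{k_{\sigma}}(\tilde{x},\tilde{y})\in E(\F_{p^n})$ and its first coordinate $\eta_{k_{\sigma}}(\tilde{x})$ lies in $A_n$.

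The case of $B_n$ is where the content lies. Take $\tilde{x}\in B_n$, so $\tilde{x}\in\F_{p^n}$ and $(\tilde{x},\tilde{y})\in E(\F_{p^{2n}})$ for some $\tilde{y}\in\F_{p^{2n}}\backslash\F_{p^n}$; note $\tilde{x}\neq-\sigma^2+2$, since $-\sigma^2+2=\overline{\sigma}+3$ is a root of $x^3-35x+98$ by Lemma~\ref{case3_lem_3} and hence lies in $A_n$. Writing
\begin{equation*}
e_{k_{\sigma}}(\tilde{x},\tilde{y})=\left(\eta_{k_{\sigma}}(\tilde{x}),\, F\cdot\tilde{y}\right),\qquad F=\frac{1}{\sigma^3}\left(1+\frac{7(1-\sigma)^4}{(\tilde{x}+\sigma^2-2)^2}\right),
\end{equation*}
I observe that $\sigma\in\F_p$ and $\tilde{x}\in\F_{p^n}$ force $F\in\F_{p^n}$, and likewise $\eta_{k_{\sigma}}(\tilde{x})\in\F_{p^n}$. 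Since $\tilde{y}\notin\F_{p^n}$ and $\F_{p^n}$ is a field, $F\cdot\tilde{y}\in\F_{p^n}$ can happen only if $F=0$; conversely, if $F\neq0$ then $F\cdot\tilde{y}\notin\F_{p^n}$, so $\eta_{k_{\sigma}}(\tilde{x})$ is the first coordinate of a point of $E(\F_{p^{2n}})$ whose second coordinate is not in $\F_{p^n}$, i.e.\ $\eta_{k_{\sigma}}(\tilde{x})\in B_n$. Thus the whole statement for $B_n$ reduces to proving that $F\neq0$ for every $\tilde{x}\in B_n$.

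The vanishing of $F$ is precisely the condition that $e_{k_{\sigma}}(\tilde{x},\tilde{y})$ be a $2$-torsion point, equivalently that $(\tilde{x},\tilde{y})$ be a preimage under $e_{k_{\sigma}}$ of one of the $2$-torsion points $(-7,0)$, $(\sigma+3,0)$, $(\overline{\sigma}+3,0)$ of Lemma~\ref{case3_lem_4} without being itself $2$-torsion. Solving $F=0$ gives $\tilde{x}+\sigma^2-2=\pm\sqrt{-7}\,(1-\sigma)^2$, and since $-7$ is a square in $\F_p$ and $\sigma\in\F_p$ these two exceptional $x$-coordinates already lie in $\F_p$. The main obstacle is therefore to show that neither of them belongs to $B_n$, i.e.\ that each is the $x$-coordinate of a point rational over $\F_{p^n}$. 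I expect this to be the only non-formal step, and a delicate one: it is the exact analogue of the role played by $\{1,-1\}$ in Section~\ref{section_case_2}, where precisely these preimages of $2$-torsion were split off into the separate class $C_n$ and shown to map into $A_n$. I would handle it by computing the two exceptional $x$-coordinates explicitly from $\sigma^2=\sigma-2$, evaluating $\tilde{y}^2=\tilde{x}^3-35\tilde{x}+98$ at each, and deciding membership in $A_n$ versus $B_n$ through the quadratic character of that value in $\F_{p^n}$. This is the point that requires real care, since the outcome genuinely depends on $p$ and on the parity of $n$, and it may well be necessary to isolate these points in an auxiliary set, exactly as was done for $C_n$ in the previous section, rather than to rely on the bare partition $\Pro(\F_{p^n})=A_n\sqcup B_n$.
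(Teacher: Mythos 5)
Your handling of $A_n$, and your reduction of the statement for $B_n$ to the non-vanishing of the factor
\begin{equation*}
F(\tilde{x}) = \frac{1}{\sigma^{3}}\left(1+\frac{7(1-\sigma)^{4}}{(\tilde{x}+\sigma^{2}-2)^{2}}\right) \in \F_{p^n},
\end{equation*}
coincide with the paper's own proof, which writes the ordinates of the image point as $\pm F(\tilde{x})\tilde{y}$. The difference lies in what happens next: the paper asserts that this product can vanish only when $\tilde{y}=0$, that is, only when $\tilde{x}\in\{-7,\sigma+3,\overline{\sigma}+3\}\subseteq A_n$, thereby overlooking the vanishing of $F$ itself---exactly the case you isolate. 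So your proposal stops short precisely where the published proof is defective: you have identified the hole correctly, but you do not fill it, and as a proof your text is incomplete.

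It is worth knowing why you could not fill it: the lemma as stated is false, so your warning that ``the outcome genuinely depends on $p$ and on the parity of $n$'' is correct in the strongest sense. Take $p=29\equiv 1\pmod{7}$, $n=1$, $\sigma=22$ and $\overline{\sigma}=8$ (the roots of $x^{2}-x+2$ in $\F_{29}$, where $\sqrt{-7}\equiv\pm 14$). Then $\sigma^{2}\equiv 20$, $7(1-\sigma)^{4}\equiv 20$, and $F(\tilde{x})=0$ becomes $(\tilde{x}+18)^{2}\equiv -20\equiv 9\pmod{29}$, with roots $\tilde{x}=14$ and $\tilde{x}=8$. Evaluating $x^{3}-35x+98\equiv x^{3}-6x+11$ at these values gives $3$ and $11$ respectively, both quadratic non-residues modulo $29$; hence $14,8\in B_{1}$. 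Nevertheless $\eta_{k_{\sigma}}(14)=11=\overline{\sigma}+3$ and $\eta_{k_{\sigma}}(8)=22\equiv -7$, which are abscissae of $2$-torsion points and so lie in $A_{1}$: the map $\eta_{k_{\sigma}}$ sends elements of $B_{1}$ into $A_{1}$. Structurally, $|E(\F_{29})|=28$ and $\pi_{29}-1=2\sqrt{-7}$, so $E(\F_{29})\cong\Z/2\oplus\Z/14$ contains no point of order $4$; the roots of $F$ are the abscissae of the four points of $\ker(\rho_{0}^{2}\overline{\rho}_{0})\setminus E[2]$, which $e_{k_{\sigma}}$ maps onto the $2$-torsion and which here are not rational over $\F_{29}$. (In the paper's example $p=53$ one has $\pi_{53}-1=\alpha^{4}\overline{\alpha}^{2}$ up to units, so these points are rational over $\F_{53}$ and the defect stays invisible.) The repair is the one you anticipate: the two roots of $F$ must be split off into an auxiliary class, exactly as $C_{n}=\{1,-1\}$ was in Section~\ref{section_case_2}. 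They lie in $A_{n}$---and the lemma then holds---when $n$ is even (being rational over $\F_{p^{2}}$), or when $n$ is odd and $\rho_{0}^{2}\overline{\rho}_{0}$ divides $\pi_{p}-1$; otherwise they lie in $B_{n}$ and are mapped into $A_{n}$, so that the lemma, and the statements of Theorems~\ref{case3_thm_1_B} and \ref{case3_thm_2} that rest on it, need to be amended.
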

\begin{proof}
By definition $\eta_{k_{\sigma}} (\infty) = \infty \in A_n$ and, by Lemma \ref{case3_lem_3}, we have that $- \sigma^2+2= \overline{\sigma}+3 \in A_n$, since $(\overline{\sigma}+3, 0) \in E(\F_{p^n})$ according to Lemma \ref{case3_lem_4}.

If $\tilde{x} \in A_n \backslash \{- \sigma^2+2, \infty \}$, then there exists $\tilde{y} \in \F_{p^n}$ such that $(\tilde{x}, \tilde{y}) \in E(\F_{p^{n}})$. Therefore, $e_{k_{\sigma}} (\tilde{x}, \tilde{y}) \in E(\F_{p^n})$ and $\eta_{k_{\sigma}} (\tilde{x}) \in A_n$.

Consider finally an element $\tilde{x} \in B_n$. In this case $(\tilde{x}, \tilde{y}) \in E(\F_{p^{2n}})$ for some $\tilde{y} \in \F_{p^{2n}} \backslash \F_{p^n}$. Moreover, $(\eta_{k_{\sigma}}(\tilde{x}), \tilde{y}_1)$ and $(\eta_{k_{\sigma}}(\tilde{x}), \tilde{y}_2)$, with
\begin{eqnarray*}
\tilde{y}_j & = & (-1)^j \cdot \frac{1}{{\sigma}^3} \cdot \tilde{y} \cdot \left(1 + \frac{7 \cdot (1- {\sigma})^4}{(x+ {\sigma}^2-2)^2}\right), \quad \text{for $j \in \{1, 2 \}$,}  
\end{eqnarray*}  
are the only rational points in $E(\F_{p^{2n}})$ having $\tilde{x}$ as $x$-coordinate. In addition, $\tilde{y} \not \in \F_{p^n}$, unless $\tilde{y} = 0$. Nevertheless, this happens if and only if $\tilde{x} \in \{-7, \sigma+3, \overline{\sigma}+3 \} \subseteq A_n$. Since $\tilde{x} \not \in A_n$ we get the thesis.
\end{proof}

Before dealing with the iterations of $\eta_{k_{\sigma}}$ on $B_n$, we introduce  the  set
\begin{eqnarray*}
E(\F_{p^{2n}})_{B_n} = \left\{(x,y) \in E(\F_{p^{2n}}) : x \in B_n \right\}.
\end{eqnarray*}

Moreover we define the set
\begin{equation*}
E(\F_{p^{2n}})^*_{B_n} = \{O, (-7,0), (\sigma+3, 0), (\overline{\sigma}+3, 0) \},
\end{equation*}
where $O$ denotes the point at infinity of $E$.

The following holds.
\begin{lemma}\label{case3_lem_1}
Let $\tilde{x} \in \F_{p^{2n}}$ and $P = (\tilde{x}, \tilde{y}) \in E(\F_{p^{2n}})$. We have that $(\pi_p^n+1) P = O$ if and only if $P \in E(\F_{p^{2n}})_{B_n} \cup E(\F_{p^{2n}})_{B_n}^*$.
\end{lemma}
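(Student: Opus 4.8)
The plan is to rewrite the condition $(\pi_p^n+1)P=O$ as $\pi_p^n(P)=-P$ and to exploit that $\pi_p^n$ acts on affine coordinates by the $p^n$-power map, so that for $P=(\tilde{x},\tilde{y})$ one has $\pi_p^n(P)=(\tilde{x}^{p^n},\tilde{y}^{p^n})$ while, since $E$ is in short Weierstrass form, $-P=(\tilde{x},-\tilde{y})$. The whole argument then reduces to a coordinate comparison and runs in close parallel to the proof of Lemma \ref{case2_lem_2}; the only genuine difference is that here the exceptional set $E(\F_{p^{2n}})_{B_n}^*$ consists of $O$ together with the three $2$-torsion points, whose $x$-coordinates $-7,\sigma+3,\overline{\sigma}+3$ are precisely the roots of $x^3-35x+98$ and already lie in $\F_p$ by Lemma \ref{case3_lem_4}.

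For the forward implication I would assume $\pi_p^n(P)=-P$ and split on whether $\tilde{y}=0$. If $\tilde{y}=0$, then $P$ is a $2$-torsion point, so $\tilde{x}\in\{-7,\sigma+3,\overline{\sigma}+3\}$ by Lemma \ref{case3_lem_4} and hence $P\in E(\F_{p^{2n}})_{B_n}^*$. If $\tilde{y}\neq 0$, comparing the two coordinates in $\pi_p^n(P)=-P$ yields $\tilde{x}^{p^n}=\tilde{x}$ and $\tilde{y}^{p^n}=-\tilde{y}$; the first gives $\tilde{x}\in\F_{p^n}$, and the second forces $\tilde{y}\notin\F_{p^n}$, for otherwise $\tilde{y}=\tilde{y}^{p^n}=-\tilde{y}$ would give $2\tilde{y}=0$, impossible for $p$ odd and $\tilde{y}\neq 0$. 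Thus $\tilde{x}\in B_n$ and $P\in E(\F_{p^{2n}})_{B_n}$.

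For the reverse implication I would treat the two pieces separately. If $P\in E(\F_{p^{2n}})_{B_n}$, then $\tilde{x}\in B_n\subseteq\F_{p^n}$ and $\tilde{y}\in\F_{p^{2n}}\backslash\F_{p^n}$; since the coefficients of the curve lie in $\F_p$ and $\tilde{x}\in\F_{p^n}$, one has $(\tilde{y}^{p^n})^2=(\tilde{y}^2)^{p^n}=(\tilde{x}^3-35\tilde{x}+98)^{p^n}=\tilde{x}^3-35\tilde{x}+98=\tilde{y}^2$, so $\tilde{y}^{p^n}=\pm\tilde{y}$, and because $\tilde{y}\notin\F_{p^n}$ the sign must be negative. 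Hence $\pi_p^n(P)=(\tilde{x},-\tilde{y})=-P$ and $(\pi_p^n+1)P=O$. If instead $P\in E(\F_{p^{2n}})_{B_n}^*$, then either $P=O$, for which the claim is trivial, or $P$ is one of the three $2$-torsion points; in that case $\tilde{x}\in\F_p\subseteq\F_{p^n}$ gives $\pi_p^n(P)=P$, and since $P=-P$ for a $2$-torsion point we conclude $(\pi_p^n+1)P=2P=O$.

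The argument is essentially bookkeeping, so I do not anticipate a serious obstacle. The one step deserving care is the reverse direction for points of $E(\F_{p^{2n}})_{B_n}$: it is not enough to note $\tilde{y}^{p^n}\neq\tilde{y}$, one must pin down $\tilde{y}^{p^n}=-\tilde{y}$ exactly, which is what the identity $(\tilde{y}^{p^n})^2=\tilde{y}^2$ combined with $\tilde{y}\notin\F_{p^n}$ delivers. I would also verify at the outset that the $x$-coordinates appearing in $E(\F_{p^{2n}})_{B_n}^*$ really lie in $\F_p$, so that the $2$-torsion points are fixed by $\pi_p^n$; this is exactly Lemma \ref{case3_lem_4} together with $\sigma,\overline{\sigma}\in\F_p$.
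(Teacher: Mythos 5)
Your proof is correct and follows essentially the same route as the paper, which simply invokes the proof of Lemma \ref{case2_lem_2}: split the forward direction on $\tilde{y}=0$ versus $\tilde{y}\neq 0$ using the coordinate action of $\pi_p^n$, and the reverse direction on $P\in E(\F_{p^{2n}})_{B_n}$ versus $P\in E(\F_{p^{2n}})_{B_n}^*$. You merely make explicit two details the paper leaves implicit, namely pinning down $\tilde{y}^{p^n}=-\tilde{y}$ via $(\tilde{y}^{p^n})^2=\tilde{y}^2$ and replacing the paper's ``direct computation'' for $E(\F_{p^{2n}})_{B_n}^*$ by the observation that its nontrivial points are $2$-torsion with $x$-coordinates in $\F_p$.
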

\begin{proof}
The proof is verbatim the same as in Lemma \ref{case2_lem_2}, just considering the new definition of the set $E(\F_{p^{2n}})_{B_n}^*$.
\end{proof}

With the notation till now introduced and in virtue of Lemma \ref{case3_lem_1} we can say that there exists an isomorphism
\begin{equation*}
\widetilde{\psi}_n : E(\F_{p^{2n}})_{B_n} \cup E(\F_{p^{2n}})^*_{B_n} \to R / (\pi_p^n+1) R.
\end{equation*}

All considered we can study the graph $G_{\eta_{k_{\sigma}}}^{p^n}$ separately on the elements of $A_n$ and $B_n$. To do that we will rely upon the action of $[\rho_0]$ on the elements of $R/(\pi_p^n-1)R$ (resp. $R/(\pi_p^n+1)R$), where

\begin{displaymath}
\rho_0 = 
\begin{cases}
\alpha, & \text{if $\alpha \equiv \sigma \pmod{\pi_p}$;}\\
\overline{\alpha}, & \text{if $\overline{\alpha} \equiv \sigma \pmod{\pi_p}$.}
\end{cases}
\end{displaymath}

Suppose that  $\pi_p^n-1$ (resp. $\pi_p^n+1$) factors in $R$, up to units, as
\begin{equation}\label{case3_eq_3}
\rho_0^{e_{0}} \cdot \left( \prod_{i=1}^v p_i^{e_i} \right) \cdot \left( \prod_{i = v+1}^w {r_i}^{e_i} \right) \cdot (\sqrt{-7})^{e_f},
\end{equation}
where 
\begin{enumerate}
\item all $e_i$, for $0 \leq i \leq w$, and $e_f$ are non-negative integers;
\item $N(\rho_0^{e_{0}}) = 2^{e_{0}}$;
\item  for $1 \leq i \leq v$ the elements $p_i \in \Z$ are distinct primes of $R$ and $N( p_i^{e_i} ) = p_i^{2 e_i}$;
\item for $v+1 \leq i \leq w$ the elements $r_i \in R \backslash \Z$  are distinct primes of $R$, different from $\rho_0$ and $\sqrt{-7}$,  and $N( r_i^{e_i} ) = p_i^{e_i}$, for some rational integer $p_i$ such that $r_i \overline{r}_i = p_i$.
\end{enumerate}

For the sake of clarity we introduce the index set 
\begin{equation*}
J = \{i : 0 \leq i \leq w \} \cup \{ f \}
\end{equation*}
and define 
\begin{displaymath}
\rho_j = 
\begin{cases}
p_j & \text{if $j \in \{1, \dots, v \}$},\\
r_j & \text{if $j \in \{v+1, \dots, w \}$},\\
\sqrt{-7} & \text{if $j=f$.}
\end{cases}
\end{displaymath}
As a consequence of the factorization (\ref{case3_eq_3}) the ring $R / (\pi_p^n - 1) R$ (resp. $R / (\pi_p^n + 1) R$) is isomorphic to 
\begin{equation}\label{orb4}
S = \prod_{j \in J} R / \rho_j^{e_j} R.
\end{equation}

The additive structure of the quotient rings involved in (\ref{orb4}) has been discussed in Section \ref{section_case_2}. Herein we just notice that
the additive group of $R / (\sqrt{-7})^{e_f} R$ is isomorphic to the direct sum of two cyclic groups of order $7^{e_f/2}$, if $e_f$ is even, or to the direct sum of two cyclic groups of order respectively $7^{(e_f-1)/2}$ and $7^{(e_f+1)/2}$, if $e_f$ is odd. In the case that $e_f$ is even, for each integer $0 \leq h_f \leq e_f/2$ there are $N_{h_f}$ elements in $R/({\sqrt{-7}})^{e_f} R$ of order $7^{h_f}$, where  
\begin{displaymath}
N_{h_f} = \left\{
\begin{array}{ll}
1 & \text{ if $h_f = 0$,}\\
7^{2 h_f} - 7^{2(h_{f}-1)} & \text{ otherwise.}
\end{array}
\right.
\end{displaymath}
If, on the contrary, $e_f$ is odd, then  
\begin{displaymath}
N_{h_f} = \left\{
\begin{array}{ll}
1 & \text{ if $h_f = 0$,}\\
7^{2 h_f} - 7^{2(h_{f}-1)} & \text{ if $1 \leq h_f \leq (e_f-1)/2$,}\\
7^{e_f} - 7^{e_f-1} & \text{ if $h_f = (e_f+1)/2$.}
\end{array}
\right.
\end{displaymath}

If $(x,y)$ is a rational point of $E(\F_{p^n})$ (resp. $E(\F_{p^{2n}})_{B_n}$), then we write $P_{(x,y)}$ for the image of $(x,y)$ in $S$.

Now we define the sets 
\begin{eqnarray*}
Z_{j} & = & \{0, 1, \dots, e_j \}, \quad \text{for any $j \in J \backslash \{ f \}$}, \\
Z_f & = & \left\{0, 1, \dots, \lceil e_f/2 \rceil \right\}
\end{eqnarray*}
and
\begin{equation*}
H = \prod_{j \in J} Z_{j}.
\end{equation*} 

If $P = (P_{0}, P_1, \dots, P_f) \in S$, then we define $h^P = (h_{0}^P, h_{1}^P, \dots, h_f^P)$ in $H$ if 
\begin{itemize}
\item $P_{0}$ has additive order $2^{h_{0}^P}$ in $R / \rho_0^{e_0} R$;
\item each $P_i$, for $1 \leq i \leq w$, has additive order $p_i^{h_i^P}$ in $R / \rho_i^{e_i} R$;
\item $P_f$ has additive order $7^{h_f^P}$ in $R / \rho_f^{e_f} R$.
\end{itemize}

Moreover, we define $o(P) = \left(o(P_{0}), o(P_{1}), \dots, o(P_f) \right)$, where $o(P_j)$ denotes, for any $j \in J$,  the additive order of $P_j$ in $R / \rho_j^{e_j} R$.

The following two lemmas furnish a characterization of $\eta_{k_{\sigma}}$-periodic elements.

\begin{lemma}\label{case3_lem_7}
Let $\tilde{x} \in A_n$ (resp. $B_n$) be $\eta_{k_{\sigma}}$-periodic. Then, one of the following holds:
\begin{itemize}
\item $\tilde{x} = \infty$;
\item $\tilde{x}$ is the $x$-coordinate of a rational point $(\tilde{x},\tilde{y}) \in E(\F_{p^n})$ (resp. $E(\F_{p^{2n}})_{B_n}$) such that $P_{(\tilde{x},\tilde{y})} = (P_{0}, P_{1}, \dots, P_f)$, where $P_{0} = 0$.
\end{itemize}

\end{lemma}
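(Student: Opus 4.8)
The plan is to mirror the proof of Lemma \ref{case2_lem_7}: the extra prime factor $\sqrt{-7}$ appearing in the factorization (\ref{case3_eq_3}) plays no role, since the whole argument concerns only the component of $S$ coming from $\rho_0$. First I would dispose of the case $\tilde{x} = \infty$, which is one of the two admissible conclusions. So I may assume $\tilde{x} \in A_n \setminus \{\infty\}$ (resp.\ $\tilde{x} \in B_n$); by the definition of these sets there is a rational point $(\tilde{x}, \tilde{y}) \in E(\F_{p^n})$ (resp.\ $E(\F_{p^{2n}})_{B_n}$), and I set $P = P_{(\tilde{x}, \tilde{y})} = (P_0, P_1, \dots, P_f) \in S$ via the isomorphism $\psi_n$ (resp.\ $\widetilde{\psi}_n$).

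The key step is to transport periodicity into the language of $S$. Since $e_{k_{\sigma}}$ is represented by $[\rho_0]$ and $\eta_{k_{\sigma}}$ is the $x$-coordinate of $e_{k_{\sigma}}$, the point $[\rho_0]^l P$ has $x$-coordinate $\eta_{k_{\sigma}}^l(\tilde{x})$. If $\tilde{x}$ is $\eta_{k_{\sigma}}$-periodic of period $l$, then $\eta_{k_{\sigma}}^l(\tilde{x}) = \tilde{x}$, so $[\rho_0]^l P$ and $P$ share the same $x$-coordinate. By Lemma \ref{case3_lem_4} two rational points with a common $x$-coordinate are negatives of one another, whence $[\rho_0]^l P = \pm P$, that is $(\rho_0^l \pm 1) P = 0$ in $S$.

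Finally I would project onto the first factor $R / \rho_0^{e_0} R$, obtaining $(\rho_0^l \pm 1) P_0 = 0$. The essential (and only) point is a coprimality observation: since $l \geq 1$ we have $\rho_0 \mid \rho_0^l$, hence $\rho_0^l \pm 1 \equiv \pm 1 \pmod{\rho_0}$, and as $\rho_0$ is a prime of $R$ it divides neither $\rho_0^l + 1$ nor $\rho_0^l - 1$. Thus $\rho_0^l \pm 1$ is a unit in $R / \rho_0^{e_0} R$, forcing $P_0 = 0$, which is the desired conclusion. I do not foresee any real obstacle here; the only point requiring a little care is the sign ambiguity produced by the two points sharing an $x$-coordinate, and this is harmless precisely because $\rho_0$ is coprime to $\rho_0^l \pm 1$ for both choices of sign.
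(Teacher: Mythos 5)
Your proposal is correct and follows essentially the same route as the paper: the paper's proof of this lemma defers verbatim to Lemma \ref{case2_lem_7}, whose argument is exactly yours --- pass to the point $P_{(\tilde{x},\tilde{y})}$, use periodicity and the two-points-per-$x$-coordinate fact to get $[\rho_0]^l P = \pm P$, and conclude $P_0 = 0$ from the coprimality of $\rho_0$ with $\rho_0^l \pm 1$. Your write-up merely makes explicit the steps (the sign ambiguity via Lemma \ref{case3_lem_4} and the unit argument in $R/\rho_0^{e_0}R$) that the paper leaves implicit.
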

\begin{proof}
The proof is verbatim the same as in Lemma \ref{case2_lem_7}.
\end{proof}

\begin{lemma}\label{case3_lem_5}
Let $P = (P_{0}, P_{1}, \dots, P_f)$ be a point in $S$ such that $P_{0} = 0$ and denote  by $l_j$, for any $j \in J$, the smallest among the positive integers $s$ such that either $[\rho_0]^s \cdot P_{j} = P_{j}$ or $[\rho_0]^s \cdot P_j = -P_{j}$.

Let $l' = \lcm (l_{0}, l_{1}, \dots, l_f)$ and denote by $l$ the smallest among the positive integers $s$ such that $[\rho_0]^s P = P$ or $[\rho_0]^s P = -P$.

Then, any $l_{j}$ is the smallest among the positive integers $s$ such that $\rho_j^{h_{j}^P}$ divides either $\rho_0^{s} + 1$ or  $\rho_0^{s} - 1$ in $R$. Moreover, $l$ is determined as follows: 
\begin{displaymath}
l = 
\begin{cases}
l' & \text{if either $\rho_j^{h_{j}^P} \mid (\rho_0^{l'} +1) $ for all $j \in J$ or $\rho_j^{h_{j}^P} \mid (\rho_0^{l'} - 1) $ for all $j \in J$},\\
2 l' & \text{otherwise}.
\end{cases}
\end{displaymath}
\end{lemma}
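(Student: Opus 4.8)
The plan is to argue componentwise in the isomorphism $S \cong \prod_{j \in J} R / \rho_j^{e_j} R$, following the template of Lemma~\ref{case2_lem_5}. Throughout I would use that $\rho_0$ is coprime to $\rho_j$ for every $j \neq 0$: the factors appearing in (\ref{case3_eq_3}) are pairwise non-associate primes, and in particular $\rho_0$ lies above $2$ whereas $\sqrt{-7}$ lies above $7$, so they cannot share a prime divisor.

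First I would establish the description of each $l_j$. Fix $j \in J$. If $P_j = 0$, then $h_j^P = 0$, $l_j = 1$, and $\rho_j^0 = 1$ divides both $\rho_0^{l_j} + 1$ and $\rho_0^{l_j} - 1$. If $P_j \neq 0$, lift $P_j$ to $\tilde P_j \in R$ and note that $[\rho_0]^s P_j = \pm P_j$ is equivalent to $\rho_j^{e_j} \mid (\rho_0^s \mp 1)\tilde P_j$, i.e.\ to $v_{\rho_j}(\rho_0^s \mp 1) \ge e_j - v_{\rho_j}(\tilde P_j)$, where $v_{\rho_j}$ denotes the $\rho_j$-adic valuation. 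For $1 \le i \le w$ the rational prime $p_i$ below $\rho_i$ satisfies $v_{\rho_i}(p_i) = 1$ (inert or split), so the additive order $p_i^{h_i^P}$ of $P_i$ forces $v_{\rho_i}(\tilde P_i) = e_i - h_i^P$ exactly; the divisibility condition then reads $\rho_i^{h_i^P} \mid \rho_0^s \mp 1$, giving the stated $l_i$ precisely as in Lemma~\ref{case2_lem_5}.

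Next I would determine $l$. Since $[\rho_0]^l P = \pm P$ holds with a single global sign, each component satisfies $[\rho_0]^l P_j = \pm P_j$; writing $l = l_j q_j + t_j$ with $0 \le t_j < l_j$ and invoking the minimality of $l_j$ forces $t_j = 0$, whence $l_j \mid l$ for all $j$ and so $l' \mid l$. At $s = l'$ every component returns to $\pm P_j$. If all these signs agree --- equivalently $\rho_j^{h_j^P} \mid \rho_0^{l'} - 1$ for all $j$, or $\rho_j^{h_j^P} \mid \rho_0^{l'} + 1$ for all $j$ --- then $[\rho_0]^{l'} P = \pm P$ and $l = l'$. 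Otherwise $[\rho_0]^{l'} P \neq \pm P$, so $l > l'$; combined with $l' \mid l$ and with $[\rho_0]^{2l'} P_j = ([\rho_0]^{l'})^2 P_j = P_j$ for every $j$, this yields $l = 2l'$. This part transfers verbatim from Lemma~\ref{case2_lem_5} and is insensitive to the presence of $\sqrt{-7}$.

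The step that does not transfer automatically, and which I expect to be the main difficulty, is the ramified component $R/(\sqrt{-7})^{e_f} R$. Since $7 = -(\sqrt{-7})^2$, multiplication by $7$ raises the $\sqrt{-7}$-adic valuation by $2$ rather than by $1$, so the additive order $7^{h_f^P}$ of $P_f$ does \emph{not} pin down $v_{\sqrt{-7}}(\tilde P_f)$ but only confines it to $\{\, e_f - 2h_f^P,\ e_f - 2h_f^P + 1 \,\}$. I would therefore treat this component directly through the valuation inequality $v_{\sqrt{-7}}(\rho_0^s \mp 1) \ge e_f - v_{\sqrt{-7}}(\tilde P_f)$, and check carefully how the ramification of $\sqrt{-7}$ in $R = \Z[\alpha]$ interacts with the order of $\rho_0$ modulo powers of $\sqrt{-7}$ before concluding that the smallest admissible $s$ is governed by $(\sqrt{-7})^{h_f^P} \mid \rho_0^s \pm 1$. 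This is the only place where the argument genuinely departs from the unramified setting of Lemma~\ref{case2_lem_5}, and it is where I would concentrate the verification.
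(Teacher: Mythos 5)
Your handling of the unramified components and of the passage from the $l_j$'s to $l$ is correct, and up to that point your argument coincides with the paper's: the paper's entire proof of this lemma is the instruction to repeat the proof of Lemma~\ref{case2_lem_5} with the index $i$ renamed $j$. You are also right to isolate the ramified factor as the one place where that transfer is not legitimate. The transferred argument rests on the identity $\gcd(\rho_j^{e_j}, o(P_j)) = \rho_j^{h_j^P}$, which holds for the unramified primes but fails for $\rho_f = \sqrt{-7}$: there $o(P_f) = 7^{h_f^P}$ has $\sqrt{-7}$-adic valuation $2h_f^P$, so the gcd is $(\sqrt{-7})^{\min(2h_f^P, e_f)}$, and, as you observe, the additive order of $P_f$ only confines $v(\tilde P_f)$ to the two values $e_f - 2h_f^P$ and $e_f - 2h_f^P + 1$ (here $v$ denotes the $\sqrt{-7}$-adic valuation on $R$). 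The paper never addresses this point.

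The gap is that your proposal ends by postponing exactly this verification, and the verification cannot be completed: the conclusion you hope to reach is false at $j = f$ whenever $e_f \geq 2$. Take $P_f = [(\sqrt{-7})^{e_f-2}]$ and $P_f' = [(\sqrt{-7})^{e_f-1}]$ in $R/(\sqrt{-7})^{e_f}R$; both are nonzero and killed by $7$, so both have additive order $7$ and $h_f^P = h_f^{P'} = 1$, and the lemma asserts that both have the same $l_f$, namely the smallest $s$ with $\sqrt{-7} \mid \rho_0^s \pm 1$. Since $2\rho_0 \equiv 1$, we have $\rho_0 \equiv 4 \pmod{\sqrt{-7}}$, whose powers in $\F_7$ are $4, 2, 1$; thus the $+1$ branch never occurs, i.e.\ $v(\rho_0^s+1)=0$ for all $s$, and the $-1$ branch first holds at $s = 3$, so the lemma predicts $l_f = 3$ for both points. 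Now $[\rho_0]^s P_f' = P_f'$ iff $v(\rho_0^s - 1) \geq 1$, but $[\rho_0]^s P_f = P_f$ iff $v(\rho_0^s - 1) \geq 2$, and since $N(\rho_0^3 - 1) = N\bigl((-7 \mp \sqrt{-7})/2\bigr) = 14$ is divisible by $7$ but not by $49$, we get $v(\rho_0^3 - 1) = 1$: so $P_f'$ has period $3$ while $P_f$ has period strictly larger than $3$ (a lifting-the-exponent computation gives $v(\rho_0^{3t}-1) = 1 + 2v_7(t)$, so the true period of $P_f$ is $21$). Hence two points with the same $h^P$ lie on cycles of different lengths; this refutes the characterization of $l_f$ by $(\sqrt{-7})^{h_f^P}$-divisibility, and with it the well-definedness of $l_h$ and the counts $N_h$ in Theorems~\ref{case3_thm_1_A} and \ref{case3_thm_1_B}, in every case where $e_f \geq 2$ in the factorization (\ref{case3_eq_3}) --- and such cases occur, e.g.\ $p = 53$, $n = 21$ gives $e_f = 3$ for $\pi_p^n - 1$. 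The same defect sinks the paper's own proof-by-reference, which silently commits the error you flagged; a correct statement must index the $f$-component by the exact quantity $e_f - v(\tilde P_f)$ rather than by the additive order of $P_f$. So your plan is sound up to its last step, but the ``careful check'' you deferred terminates in a counterexample, not a proof.
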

\begin{proof}
Fix a $j \in J$. The proof of the current lemma is the same as in Lemma \ref{case2_lem_5}, replacing all occurrences of the index $i$ by $j$.
\end{proof}

The following holds.
\begin{lemma}\label{case3_lem_6}
Let $x_0 \in \F_{p^n}$ be $\eta_{k_{\sigma}}$-periodic, $s$ a positive integer and $x_s = \eta_{k_{\sigma}}^s (x_0)$. Let $(x_0,y_0) \in E(\F_{p^n})$ (resp. $E(\F_{p^{2n}})_{B_n}$) for some $y_0 \in \F_{p^n}$ (resp. $\F_{p^{2n}}$) and $(x_s, y_s) \in E(\F_{p^n})$ (resp. $E(\F_{p^{2n}})_{B_n}$) for some $y_s \in \F_{p^n}$ (resp. $\F_{p^{2n}}$). If $Q^{(0)} = P_{(x_0,y_0)}$ and $Q^{(s)} = P_{(x_s,y_s)}$, then $h^{Q^{(0)}} = h^{Q^{(s)}}$.
\end{lemma}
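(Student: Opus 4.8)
The plan is to mirror the proof of Lemma \ref{case2_lem_6} almost verbatim, the only structural novelty being the extra coordinate indexed by $f$ in the decomposition $S = \prod_{j \in J} R/\rho_j^{e_j} R$, which corresponds to the ramified prime $\rho_f = \sqrt{-7}$. First I would observe that, since $x_s = \eta_{k_{\sigma}}^s(x_0)$ and the endomorphism $e_{k_{\sigma}}$ is represented by $[\rho_0]$ on $S$, the images satisfy $Q^{(s)} = \pm [\rho_0]^s Q^{(0)}$. Writing $Q^{(0)} = (Q_0^{(0)}, \dots, Q_f^{(0)})$ componentwise gives $Q^{(s)} = \pm([\rho_0]^s Q_0^{(0)}, \dots, [\rho_0]^s Q_f^{(0)})$, and because negation preserves additive order in each factor of $S$, it suffices to compare, for each $j \in J$, the additive order of $[\rho_0]^s Q_j^{(0)}$ with that of $Q_j^{(0)}$.

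The key step is the coprimality argument. For every $j \in J$ with $j \neq 0$, I would argue that multiplication by $\rho_0^s$ is an automorphism of $R/\rho_j^{e_j} R$, so that it preserves additive orders and hence $h_j^{Q^{(0)}} = h_j^{Q^{(s)}}$. For $1 \leq j \leq w$ this holds because the factorization hypotheses (\ref{case3_eq_3}) list the $\rho_j$ as primes distinct from $\rho_0$, and $\rho_0$ is an associate of $\alpha$ or $\overline{\alpha}$, both lying above the rational prime $2$. The only genuinely new verification is the case $j = f$: here $\rho_f = \sqrt{-7}$ lies above $7$, whereas $\rho_0$ lies above $2$, so since $2$ and $7$ are distinct rational primes, $\rho_0$ and $\sqrt{-7}$ are coprime in the Euclidean ring $R$, and multiplication by $\rho_0^s$ is again invertible on $R/(\sqrt{-7})^{e_f} R$.

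To finish, I would treat the $0$-component separately. By Lemma \ref{case3_lem_7}, the $\eta_{k_{\sigma}}$-periodicity of $x_0$ forces $Q_0^{(0)} = 0$, whence $h_0^{Q^{(0)}} = 0$; applying the same lemma to $x_s$, which is periodic since it lies on the same cycle, gives $Q_0^{(s)} = 0$ and $h_0^{Q^{(s)}} = 0$. Combining this with the componentwise equalities established above yields $h^{Q^{(0)}} = h^{Q^{(s)}}$, as required.

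I do not expect a real obstacle here: the statement is the Case~3 counterpart of Lemma \ref{case2_lem_6}, and the argument transfers without modification apart from the single coprimality check for the $\sqrt{-7}$-factor. The one point deserving explicit mention is precisely that check, namely that $\rho_0$ and $\sqrt{-7}$ lie above distinct rational primes $2$ and $7$, which is immediate once recalled. Accordingly, the cleanest writeup is simply to reproduce the proof of Lemma \ref{case2_lem_6}, replacing the index range $\{0,\dots,w\}$ by $J$ and inserting the remark that $\gcd(\rho_0,\sqrt{-7}) = 1$ covers the additional coordinate.
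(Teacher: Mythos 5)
Your proposal is correct and follows essentially the same route as the paper: the paper's proof of Lemma \ref{case3_lem_6} simply invokes the argument of Lemma \ref{case2_lem_6} (the relation $Q^{(s)} = \pm[\rho_0]^s Q^{(0)}$, coprimality of $\rho_0$ with each $\rho_j$ for $j \in J \setminus \{0\}$, and vanishing of the $0$-component forced by periodicity), with the index set $\{0,\dots,w\}$ replaced by $J$. Your explicit check that $\rho_0$ and $\sqrt{-7}$ are coprime because they lie above the distinct rational primes $2$ and $7$ is exactly the point the paper leaves implicit in its list of factorization hypotheses, and your use of Lemma \ref{case3_lem_7} on $x_s$ is an equally valid alternative to the paper's direct deduction $Q_0^{(s)} = \pm[\rho_0]^s \cdot 0 = 0$.
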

\begin{proof}
The proof is the same as in Lemma \ref{case2_lem_6}, defining $Q^{(0)} = (Q_{0}^{(0)}, Q_{1}^{(0)}, \dots, Q_f^{(0)})$, $Q^{(s)} = \pm ([\rho_0]^s Q_{0}^{(0)}, \dots, [\rho_0]^s Q_f^{(0)})$ and replacing $i \not = 0$ with $j \in J \backslash \{ 0 \}$.
\end{proof}

As in Section \ref{section_case_2} we introduce the following notation.

\begin{definition}
If $x \in \Pro (\F_{p^n})$ is $\eta_{k_{\sigma}}$-periodic, then we denote by
\begin{equation*}
\left< x \right>_{\eta_{k_{\sigma}}}  = \{ \eta_{k_{\sigma}}^r (x) : r \in \N \} 
\end{equation*}
the elements of the cycle of $x$ with respect to the map $\eta_{k_{\sigma}}$.
\end{definition}

In virtue of Lemmas \ref{case3_lem_5} and \ref{case3_lem_6} we can give the following definition.
\begin{definition}
If $h = (h_{0}, h_{1}, \dots, h_f) \in H$, then $C_h$ denotes the set of all cycles $\left< x \right>_{\eta_{k_{\sigma}}}$ of $G^{p^n}_{\eta_{k_{\sigma}}}$, where $x \in A_n$ (resp. $B_n$) and exactly one of the following conditions holds:
\begin{itemize}
\item $x = \infty$ (and $h=(0,0, \dots, 0)$);
\item $(x, y) \in E(\F_{p^n})$ (resp. $E (\F_{p^{2n}})_{B_n}$) for some $y \in \F_{p^n}$ (resp. $\F_{p^{2n}}$) and $h^{P_{(x,y)}} = h$. 
\end{itemize}
Moreover we introduce the following notations:
\begin{itemize}
\item $l_h$ is the length of the cycles in $C_h$;
\item $C_{A_n} = \{ \langle x \rangle_{\eta_{k_{\sigma}}} \in  G^{p^n}_{\eta_{k_{\sigma}}} : x \in A_n \}$;
\item $C_{B_n} = \{ \langle x \rangle_{\eta_{k_{\sigma}}} \in  G^{p^n}_{\eta_{k_{\sigma}}} : x \in B_n \}$;
\item $N_h = \dfrac{1}{2 l_h} \cdot \phi(2^{h_{0}}) \cdot \left( \displaystyle\prod_{i=1}^{v} N_{h_i} \right)  \cdot \left( \displaystyle\prod_{i=v+1}^{w} \phi(p_i^{h_i}) \right) \cdot N_{h_f}$.
\end{itemize}
\end{definition}

Now we state two theorems, concerning respectively the cycles of $C_{A_n}$ and $C_{B_n}$.
\begin{theorem}\label{case3_thm_1_A}
Let $S \cong R / (\pi_p^n-1) R$ and let $H_A \subseteq H$ be the set formed by all $h \in H$ such that $h_0 = 0$. Then, 
\begin{equation*}
C_{A_n} = \bigsqcup_{h \in H_A} C_h.
\end{equation*}
Moreover, $|C_h| = 1$ in the following cases:
\begin{itemize}
\item $h \in H_A$ and $h_j=0$ for all $j \in J$;
\item $h \in H_A$, $\rho_{\tilde{j}} = \overline{\rho}_0$ for some $1 \leq \tilde{j} \leq w$, $h_{\tilde{j}} = 1$ and $h_j = 0$ for $j \not = \tilde{j}$.
\end{itemize}
In all other cases $|C_h| = N_h$. 
\end{theorem}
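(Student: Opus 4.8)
The plan is to follow closely the template of the proof of Theorem \ref{case2_thm_1_A}, the only genuinely new phenomenon being the richer $2$-torsion structure that arises when the conjugate prime $\overline{\rho}_0$ occurs among the $r_{v+1}, \dots, r_w$. First I would establish the disjoint decomposition $C_{A_n} = \bigsqcup_{h \in H_A} C_h$ verbatim as in Case $2$: by Lemma \ref{case3_lem_7} every cycle in $C_{A_n}$ either contains $\infty$, giving the label $h = (0, \dots, 0) \in H_A$, or is represented by a point $(\tilde{x}, \tilde{y}) \in E(\F_{p^n})$ whose image $P_{(\tilde{x}, \tilde{y})}$ satisfies $P_0 = 0$, so that $h^{P} \in H_A$; conversely each $C_h \subseteq C_{A_n}$ for $h \in H_A$. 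This containment argument is routine.

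The substance lies in the cardinality count, which hinges entirely on whether the points labelled by $h$ are $2$-torsion. Since $2 = \rho_0 \overline{\rho}_0$ and $h_0 = 0$, a point $P = (0, P_1, \dots, P_f)$ is $2$-torsion precisely when each nonzero component lies in a factor whose associated prime divides $2$; apart from $\rho_0$, the only such prime of $R$ is $\overline{\rho}_0$. Hence, under $h_0 = 0$, the point $P$ is $2$-torsion if and only if either $P = 0$ (the point $\infty$, the first special case) or $P$ is supported on a single factor $R / \overline{\rho}_0 R$ with $h_{\tilde{j}} = 1$ (the second special case). In every remaining case $P$ is not $2$-torsion, so the associated $\tilde{y} \neq 0$, and Lemma \ref{case3_lem_4} supplies two distinct rational points per $x$-coordinate; dividing the count $\phi(2^{h_0}) \prod_{i=1}^v N_{h_i} \prod_{i=v+1}^w \phi(p_i^{h_i}) N_{h_f}$ of points carrying the label $h$ by $2 l_h$ yields $|C_h| = N_h$, exactly as in Case $2$.

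I expect the main obstacle to be the second $|C_h| = 1$ case, which has no analogue in Case $2$. When $\rho_{\tilde{j}} = \overline{\rho}_0$, $h_{\tilde{j}} = 1$ and all other entries vanish, the unique point carrying this label is the sole nonzero element of $R / \overline{\rho}_0 R \cong \F_2$ (here $r_{\tilde{j}} \overline{r}_{\tilde{j}} = 2$, so $\phi(p_{\tilde{j}}) = \phi(2) = 1$). This is a $2$-torsion point, so by Lemma \ref{case3_lem_4} its $x$-coordinate arises from a single rational point with $\tilde{y} = 0$. The delicate step is to verify that this point forms a cycle of length $1$: one shows that $[\rho_0]$ fixes it, because $\overline{\rho}_0 \nmid \rho_0$ forces $\rho_0 \equiv 1 \pmod{\overline{\rho}_0}$, whence multiplication by $\rho_0$ is the identity on $R / \overline{\rho}_0 R$. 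Concretely this periodic point is the fixed point $\sigma + 3$ of Lemma \ref{case3_lem_3}.

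Finally I would note that this singleton cycle is exactly what the formula $N_h$ fails to capture: substituting $l_h = 1$, $h_0 = 0$ and $\phi(p_{\tilde{j}}) = 1$ into the definition of $N_h$ returns the non-integer $\tfrac{1}{2}$, reflecting that the generic "two points per $x$-coordinate" reasoning collapses at a $2$-torsion point. Replacing this spurious value by the correct count $|C_h| = 1$ completes the argument, and the disjointness of the two special labels from one another and from all generic labels follows immediately from their defining support conditions.
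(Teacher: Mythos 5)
Your proposal is correct and takes essentially the same approach as the paper: the same decomposition of $C_{A_n}$ via Lemma \ref{case3_lem_7}, the same count (number of points of $S$ carrying the label $h$, divided by $2 l_h$), and the same isolation of the two $2$-torsion exceptions where the generic count fails. The only immaterial divergence is in the second exceptional case, where you argue inside $S$ --- the order-$2$ element of the $\overline{\rho}_0$-factor is unique and fixed by $[\rho_0]$ because $\rho_0 \equiv 1 \pmod{\overline{\rho}_0}$ --- whereas the paper deduces $\tilde{y}=0$, hence $\tilde{x} \in \{-7, \sigma+3, \overline{\sigma}+3\}$, and discards $-7$ and $\overline{\sigma}+3$ as non-periodic by Lemma \ref{case3_lem_3}; both arguments are sound.
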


\begin{proof}
Since $C_h \subseteq C_{A_n}$ for any $h \in H_A$, we have that $\displaystyle\bigsqcup_{h \in H_A} C_h \subseteq C_{A_n}$. Vice versa, if $\langle \tilde{x} \rangle_{\eta_{k_{\sigma}}} \in C_{A_n}$, then either $\tilde{x} = \infty$, or $(\tilde{x}, \tilde{y}) \in E(\F_{p^n})$ for some $\tilde{y} \in \F_{p^n}$ and $P = P_{(\tilde{x}, \tilde{y})} = (0, P_1, \dots, P_f)$ by Lemma \ref{case3_lem_7}. Hence, $h_0^P = 0$ and $h^P \in H_A$. Therefore, $ C_{A_n}  \subseteq \displaystyle\bigsqcup_{h \in H_A} C_h$.

We now evaluate $|C_h|$, analysing separately the different cases.
\begin{itemize}
\item Let $\langle \tilde{x} \rangle_{\eta_{k_{\sigma}}} \in C_h$, where $h \in H_A$ and $h_j = 0$ for all $j \in J$.  Then $h=h^P$, where $P_j = 0$ for all $j \in J$. Therefore, $ \tilde{x} = \infty$ and $|C_h| = 1$. 
\item Let $\langle \tilde{x} \rangle_{\eta_{k_{\sigma}}} \in C_h$, where $h \in H_A, h_{\tilde{j}} = 1$ for some  $1 \leq \tilde{j} \leq w$ and $h_j = 0$ for $j \not = \tilde{j}$. Then $h = h^P$, where $P_j = 0$ for $j \not = \tilde{j}$. Moreover, $P_{\tilde{j}}$ has additive order $2$. Hence, also $P$ has additive order $2$ and $P = P_{(\tilde{x}, 0)}$. Therefore, $\tilde{x} \in \{-7, \sigma+3, \overline{\sigma}+3 \}$. Since $-7$ and $\overline{\sigma}+3$ are not $\eta_{k_{\sigma}}$-periodic, we conclude that $\tilde{x} = \sigma +3$ and $|C_h| = 1$.
\item Let $\langle \tilde{x} \rangle_{\eta_{k_{\sigma}}} \in C_h$ for some $h \in H_A$ such that none of the previous conditions occurs. Then, $(\tilde{x},\tilde{y}) \in E(\F_{p^n})$ for some $\tilde{y} \in \F_{p^n}$ and the additive order of $P_{(\tilde{x}, \tilde{y})}$ is not $2$. Hence, there are exactly two distinct rational points in $E(\F_{p^n})$ having such a $x$-coordinate. Since the length of the cycle $\langle \tilde{x} \rangle_{\eta_{k_{\sigma}}}$ is $l_h$ and the number of points $Q$ in $S$ such that $h^Q = h$ is 
\begin{equation*}
\phi(2^{h_0}) \cdot \left( \displaystyle\prod_{i=1}^{v} N_{h_i} \right) \cdot \left( \displaystyle\prod_{i=v+1}^{w} \phi(p_i^{h_i}) \right) \cdot N_{h_f},
\end{equation*}
the thesis follows.
\end{itemize}
\end{proof}

\begin{theorem}\label{case3_thm_1_B}
Let $S \cong R / (\pi_p^n+1) R$ and let $H_B \subseteq H$ be the set formed by all $h \in H$ such that
\begin{itemize}
\item $h_{0} = 0$;
\item $h \not = (0, 0, \dots, 0)$;
\item if $\rho_{\tilde{j}} = \overline{\rho}_0$ for some $1 \leq \tilde{j} \leq w$ and $h_{\tilde{j}} = 1$, then $h_j \not = 0$ for some $j \not = \tilde{j}$.
\end{itemize}
Then,
\begin{equation*}
C_{B_n} = \bigsqcup_{h \in H_B} C_h
\end{equation*}
and $|C_h| = N_h$ for any $h \in H_B$.
\end{theorem}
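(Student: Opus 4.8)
The plan is to follow the blueprint of the proof of Theorem~\ref{case2_thm_1_B}, establishing the two inclusions that yield the disjoint-union decomposition and then counting the cycles in each $C_h$. The only genuinely new feature relative to Case~2 is that the factorization (\ref{case3_eq_3}) may contain $\overline{\rho}_0$ among the $r_i$, so the work will concentrate on verifying that the third defining clause of $H_B$ excludes precisely the nontrivial $2$-torsion that lives in $E(\F_{p^{2n}})_{B_n}^*$ rather than in $B_n$. First I would prove $\bigsqcup_{h \in H_B} C_h \subseteq C_{B_n}$, which is immediate from the definition of $C_h$ in the $B_n$ setting. For the reverse inclusion, take $\langle \tilde{x} \rangle_{\eta_{k_{\sigma}}} \in C_{B_n}$; then $\tilde{x} \in B_n$ and $(\tilde{x}, \tilde{y}) \in E(\F_{p^{2n}})_{B_n}$ for some $\tilde{y} \in \F_{p^{2n}} \backslash \F_{p^n}$, so in particular $\tilde{y} \neq 0$ and $P := P_{(\tilde{x}, \tilde{y})}$ is neither the point at infinity nor a $2$-torsion point with vanishing $y$-coordinate. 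Lemma~\ref{case3_lem_7} gives $h_0^P = 0$, while $P \neq O$ forces $h^P \neq (0, \dots, 0)$. To obtain the third condition I argue by contradiction: if $\rho_{\tilde{j}} = \overline{\rho}_0$, $h_{\tilde{j}}^P = 1$ and $h_j^P = 0$ for every $j \neq \tilde{j}$, then the only nonzero coordinate of $P$ has additive order $2$, so $P$ itself has additive order $2$; by Lemma~\ref{case3_lem_4} the nontrivial $2$-torsion points have $x$-coordinates $-7$, $\sigma+3$, $\overline{\sigma}+3$, all with $\tilde{y}=0 \in \F_{p^n}$, contradicting $\tilde{y} \notin \F_{p^n}$. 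Hence $h^P \in H_B$ and $\langle \tilde{x} \rangle_{\eta_{k_{\sigma}}} \in C_{h^P}$.

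For the cardinality, fix $h \in H_B$ and count the points $Q \in S$ with $h^Q = h$; by the additive structure of the factors $R/\rho_j^{e_j}R$ recalled above (including the description of $R/(\sqrt{-7})^{e_f}R$), this count equals $\phi(2^{h_0}) \cdot \prod_{i=1}^{v} N_{h_i} \cdot \prod_{i=v+1}^{w} \phi(p_i^{h_i}) \cdot N_{h_f}$. The crucial verification is that, for $h \in H_B$, none of these $Q$ lies in $E(\F_{p^{2n}})_{B_n}^*$: the point $O$ is ruled out by $h \neq (0,\dots,0)$, while each nontrivial $2$-torsion point has an order-$2$ coordinate forced either at $\rho_0$ (whence $h_0 \geq 1$, contradicting $h_0 = 0$) or solely at $\overline{\rho}_0$ (the configuration removed by the third clause of $H_B$). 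Thus every such $Q$ is the image under $\widetilde{\psi}_n$ of a genuine point of $E(\F_{p^{2n}})_{B_n}$, so its $x$-coordinate lies in $B_n$. By Lemma~\ref{case3_lem_4} each such $\tilde{x}$ arises from exactly the two points $(\tilde{x}, \pm\tilde{y})$, which map to $\pm P$ and hence share the same $h$; therefore the number of distinct periodic $\tilde{x} \in B_n$ with $h^{P_{(\tilde{x},\tilde{y})}} = h$ is half the point count.

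Finally, Lemmas~\ref{case3_lem_5} and~\ref{case3_lem_6} guarantee that all these $\tilde{x}$ are $\eta_{k_{\sigma}}$-periodic and that $h$ is constant along each cycle, so they partition into cycles of common length $l_h$. Dividing the point count by $2 l_h$ then yields exactly $N_h$ cycles, that is $|C_h| = N_h$. The main obstacle throughout is the bookkeeping around the even prime $2 = \rho_0 \overline{\rho}_0$: one must check that the full $2$-torsion $E[2]$ lies in the kernel of $\pi_p^n + 1$ (because the three nontrivial $2$-torsion points are already $\F_p$-rational, so $\pi_p^n$ fixes them and $(\pi_p^n+1)P = 2P = O$), so that $\overline{\rho}_0$ genuinely occurs in (\ref{case3_eq_3}) and the third clause of $H_B$ is not vacuous, and then confirm that this clause strips off exactly the single extra special point it produces, leaving the clean formula $|C_h| = N_h$.
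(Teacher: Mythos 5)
Your proposal is correct and follows essentially the same route as the paper's own proof: the same two inclusions via Lemma~\ref{case3_lem_7}, the same exclusion of order-$2$ points (where the paper simply notes that such a $P$ would force $\tilde{y}=0$, contradicting $\tilde{y}\in\F_{p^{2n}}\backslash\F_{p^n}$), and the same count of points $Q\in S$ with $h^Q=h$ divided by $2l_h$. The extra bookkeeping you supply --- identifying which clause of $H_B$ excludes which point of $E(\F_{p^{2n}})_{B_n}^*$ and checking that $E[2]\subseteq\ker(\pi_p^n+1)$ so that $\overline{\rho}_0$ actually appears in the factorization --- is sound detail that the paper leaves implicit.
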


\begin{proof}
Since $C_h \subseteq C_{B_n}$ for any $h \in H_B$, we have that $\displaystyle\bigsqcup_{h \in H_B} C_h \subseteq C_{B_n}$. Vice versa, if $\langle \tilde{x} \rangle_{\eta_{k_{\sigma}}} \in C_{B_n}$, then $\tilde{x} \in \F_{p^n}$ and $(\tilde{x}, \tilde{y}) \in E(\F_{p^{2n}})_{B_n}$ for some $\tilde{y} \in \F_{p^{2n}} \backslash \F_{p^n}$. In particular, $P = P_{(\tilde{x}, \tilde{y})}$ is not the point at infinity and $h^P \not = (0, 0, \dots, 0)$. At the same time $h_0^P = 0$ by Lemma \ref{case3_lem_7}. Moreover, if $\rho_{\tilde{j}} = \overline{\rho}_0$ for some $1 \leq \tilde{j} \leq w$ and $h_{\tilde{j}} =1$, then $h_j \not = 0$ for some $j \not = \tilde{j}$. On the contrary, $P$ would have additive order $2$, namely $\tilde{y} = 0$. Hence, $ C_{B_n}  \subseteq \displaystyle\bigsqcup_{h \in H_B} C_h$.

Let $\langle \tilde{x} \rangle_{\eta_{k_{\sigma}}} \in C_h$ for some $h \in H_B$. Then, $(\tilde{x}, \tilde{y}) \in E(\F_{p^{2n}})_{B_n}$ for some $\tilde{y} \in \F_{p^{2n}}$. Moreover, by definition of $H_B$, the additive order of $P_{(\tilde{x}, \tilde{y})}$ is not $2$. This latter fact implies that there are exactly two distinct points in $E(\F_{p^{2n}})_{B_n}$ having $\tilde{x}$ as $x$-coordinate. Since the length of the cycle $\langle \tilde{x} \rangle_{\eta_{k_{\sigma}}}$ is $l_h$ and the number of points $Q$  in $S$ such that $h^Q = h$ is 
\begin{equation*}
\phi(2^{h_{0}})  \cdot \left( \displaystyle\prod_{i=1}^{v} N_{h_i} \right) \cdot \left( \displaystyle\prod_{i=v+1}^{w} \phi(p_i^{h_i}) \right) \cdot N_{h_f},
\end{equation*}
the thesis follows.
\end{proof} 

In the following we will denote by $V_{A_n}$ (resp. $V_{B_n}$) the set of the $\eta_{k_{\sigma}}$-periodic elements of $A_n$ (resp. $B_n$).  Before proceeding with the description of the trees rooted in vertices of $V_{A_n}$ and $V_{B_n}$ we notice that,  according to \cite{gil},
\begin{eqnarray*}
R / \rho_0^{e_{0}} R & = & \left\{ \sum_{i=0}^{e_{0}-1} \delta_i \cdot [\rho_0]^i: \delta_i = 0 \text{ or } 1 \right\}.
\end{eqnarray*}

The following theorem characterizes the reversed trees having root in $V_{A_n}$ (resp. $V_{B_n}$).
\begin{theorem}\label{case3_thm_2}
Any element $x_0 \in V_{A_n}$ (resp. $V_{B_n}$) is the root of a reversed binary tree of depth $e_{0}$ with the following properties.
\begin{itemize}
\item If $x_0 \not \in \{ \sigma + 3, \infty \}$, then $x_0$ has exactly one child, while all other vertices have two distinct children.
\item If $x_0 \in \{ \sigma + 3, \infty \}$, then $x_0$ and the vertex at the level $1$ of the tree have exactly one child each, while all other vertices have two distinct children. 
\end{itemize}
\end{theorem}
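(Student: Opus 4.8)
The plan is to follow the template of Theorem~\ref{case2_thm_2}, supplying the two ingredients that are genuinely specific to the map $\eta_{k_{\sigma}}$: the bound on the out-degree of the reversed trees and the determination of the vertices possessing a single child. The depth statement will then follow from the algebraic model exactly as in Case~2.

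First I would show that every vertex has at most two children. By definition $\eta_{k_{\sigma}}(x) = \infty$ exactly for $x \in \{-\sigma^2+2, \infty\}$, which by the first item of Lemma~\ref{case3_lem_3} equals $\{\overline{\sigma}+3, \infty\}$; and for a finite target $x_r$ the relation $\eta_{k_{\sigma}}(x) = x_r$ clears denominators to a quadratic in $x$. To single out the targets $x_r$ having a unique preimage I would impose that this quadratic have vanishing discriminant. Writing $\eta_{k_{\sigma}}(x) = \frac{x^2 + Ax - B}{\sigma^2(x+A)}$ with $A = \sigma^2-2$ and $B = 7(1-\sigma)^4$, the discriminant condition rearranges to $(\sigma^2 x_r + A)^2 = -4B$. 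Using the relations $\sigma^2 = \sigma-2$ and $(2\sigma-1)^2 = -7$ (so that $\sqrt{-B} = 5-3\sigma$), this solves to $\sigma^2 x_r = -A \pm 2\sqrt{-B}$, that is $x_r \in \{-7,\ \overline{\sigma}+3\}$. Hence the only two targets with a single preimage are $-7$ and $\overline{\sigma}+3$.

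Next I would assemble the tree shapes from this ramification data together with the orbit relations $\eta_{k_{\sigma}}(-7) = \sigma+3$, $\eta_{k_{\sigma}}(\overline{\sigma}+3) = \infty$ and the fixedness of $\sigma+3$ and $\infty$ from Lemma~\ref{case3_lem_3}. A periodic root always sends one of its two preimages back onto its own cycle, so every root has exactly one child. For a root $x_0 \notin \{\sigma+3,\infty\}$ neither critical value occurs inside the tree, because $-7$ and $\overline{\sigma}+3$ lie in the trees of $\sigma+3$ and $\infty$ respectively; thus all non-root vertices have two children. For $x_0 = \sigma+3$ the unique child is $-7$, itself a critical value, whose only child is $9-4\sigma$; for $x_0 = \infty$ the unique child is $\overline{\sigma}+3$, again a critical value, whose only child is $2\sigma-1$. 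This places the two critical values at level $1$ of the two exceptional trees and yields precisely the stated branching pattern.

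Finally, for the depth I would reuse the model $S = \prod_{j\in J} R/\rho_j^{e_j}R$ and the representation $R/\rho_0^{e_0}R = \{\sum_{i=0}^{e_0-1}\delta_i[\rho_0]^i\}$ recalled before the statement. A periodic point maps to $P = (0,P_1,\dots,P_f)$ by Lemma~\ref{case3_lem_7}; since $[\rho_0]^{e_0}$ annihilates the $\rho_0$-component, $[\rho_0]^{e_0}Q$ is periodic for every $Q\in S$, so no tree exceeds depth $e_0$, while the choice $Q_0 = [1]$ and $Q_j = [\rho_0]^{-e_0}P_j$ for $j \neq 0$ exhibits a vertex at level exactly $e_0$, and the one-step lifting of Theorem~\ref{case2_thm_2} fills in the intermediate levels. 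The step I expect to be the main obstacle is the ramification computation, which must be carried out in $R = \Z[\alpha]$ via $\sigma^2 = \sigma-2$ and $(2\sigma-1)^2 = -7$; once the critical values are identified as $-7$ and $\overline{\sigma}+3$, the remainder follows the Case~2 argument essentially verbatim.
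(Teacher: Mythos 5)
Your proposal is correct, and its overall skeleton matches the paper's: the depth claim is delegated to the algebraic model exactly as in Theorem~\ref{case2_thm_2}, and the tree shapes are assembled from the orbit relations $\eta_{k_{\sigma}}(-7)=\sigma+3$, $\eta_{k_{\sigma}}(\overline{\sigma}+3)=\infty$ and the fixedness of $\sigma+3$ and $\infty$ from Lemma~\ref{case3_lem_3}. Where you genuinely diverge is the identification of the single-child vertices. The paper does \emph{not} compute the discriminant of the $\eta_{k_{\sigma}}$-preimage equation directly: it momentarily passes to the isomorphic graph $G_{\theta_{k_{\sigma}}}^{p^n}$ via the conjugation of Lemma~\ref{case3_lem_2}, where the preimage equation is $x^2-\frac{\gamma_r}{k_{\sigma}}x+1=0$ and the vanishing-discriminant targets are simply $\gamma_r=\pm 2k_{\sigma}$, and then computes $\chi_{k_{\sigma}}(2k_{\sigma})=-7$ and $\chi_{k_{\sigma}}(-2k_{\sigma})=\overline{\sigma}+3$. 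You instead carry out the ramification analysis in the $\eta$-coordinates, reducing to $(\sigma^2 x_r+A)^2=-4B$ and extracting the square root via $\sigma^2=\sigma-2$ and $(5-3\sigma)^2=-7(1-\sigma)^4$; I checked this computation and it is correct, yielding the same critical values $\{-7,\overline{\sigma}+3\}$. The paper's detour buys cheap algebra by reusing an already-proved lemma, while your direct route keeps the argument self-contained in one coordinate system and, as a bonus, exhibits the level-two vertices $9-4\sigma$ and $2\sigma-1$ of the two exceptional trees explicitly, which the paper leaves implicit. Two small points you should make explicit to be fully rigorous: first, after clearing denominators the pole $x=-A=\overline{\sigma}+3$ is never a root of the preimage quadratic (its value there is $-B\neq 0$), so the preimage count really is governed by the discriminant alone; second, $-7$ and $\overline{\sigma}+3$ are strictly preperiodic (again by Lemma~\ref{case3_lem_3}, since they map onto fixed points distinct from themselves), which is what justifies both your claim that every periodic root has two distinct preimages and your claim that the critical values sit at level $1$ of the two exceptional trees and nowhere else.
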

\begin{proof}
The proof that the depth of the tree is $e_0$ and that any vertex at the level $r < e_0$ of the tree has at least one child follows the same lines as the proof of Theorem \ref{case2_thm_2}. More precisely, we have to replace the map $\theta_{k_{\sigma}}$ with the map $\eta_{k_{\sigma}}$, the index set $\{0, 1, \dots, w \}$ with the current index set $J$ and modify consequently the points $P, Q$ and $\tilde{Q}$ considering the new index set.

As regards the number of children of any vertex $x_r \not = \infty$ at a certain level $r < e_0$ of the tree rooted in $x_0$, we now prove that $x_r$ has exactly two distinct children, unless $x_r \in \{-7, \overline{\sigma}+3 \}$. To do that we momentarily consider the isomorphic graph $G_{\theta_{k_{\sigma}}}^{p^n}$ and the vertex $\gamma_r = \chi^{-1}_{k_{\sigma}} (x_r)$ belonging to the level $r$ of the tree rooted in $\chi^{-1}_{k_{\sigma}} (x_0)$. The vertex $\gamma_r$ has exactly two distinct children, unless the discriminant of the quadratic equation $x^2 - \frac{\gamma_r}{k_{\sigma}} x + 1 =0$ is zero, namely $\gamma_r^2 - 4 k_{\sigma}^2 = 0$. This latter happens if and only if $\gamma_r = \pm 2 k_{\sigma}$. We prove now that
\begin{eqnarray*}
\chi_{k_{\sigma}} (2 k_{\sigma}) & = & - 7;\\
\chi_{k_{\sigma}} (- 2 k_{\sigma}) & = & \overline{\sigma} + 3. 
\end{eqnarray*}
Actually, both the assertions can be proved by explicit computation. Let us consider the first assertion.
\begin{eqnarray*}
\chi_k (2 k_{\sigma}) & = & \frac{3 k_{\sigma}+\frac{5}{2}}{\left(\frac{\frac{1}{2}-k_{\sigma}}{4 k_{\sigma}+1}\right)} = \frac{7 k_{\sigma} - \frac{7}{2}}{\frac{1}{2}-k_{\sigma}} = -7.
\end{eqnarray*}
Finally consider the second assertion. 
We have that
\begin{eqnarray*}
\chi_k (-2 k_{\sigma}) & = & \frac{- k_{\sigma}+\frac{5}{2}}{\left(\frac{\frac{1}{2} + 3 k_{\sigma}}{4 k_{\sigma}+1}\right)} = \frac{11 k_{\sigma} + \frac{9}{2}}{\frac{1}{2}+ 3 k_{\sigma}} = - \sigma + 4 = - \sigma^2 + 2 = \overline{\sigma} + 3
\end{eqnarray*}
by Lemma \ref{case3_lem_3}. By the same lemma,
\begin{equation*}
\eta_{k_{\sigma}} (-7) = \sigma +3 \quad \text{and} \quad \eta_{k_{\sigma}} (\sigma + 3) = \sigma + 3,
\end{equation*}
while 
\begin{equation*}
\eta_{k_{\sigma}} (\overline{\sigma} + 3) = \infty \quad \text{and} \quad \eta_{k_{\sigma}} (\infty) = \infty.
\end{equation*}
Hence, the thesis follows.
\end{proof}

\subsection{Structure of the graphs $G_{\theta_{k_{\sigma}}}^{p^n}$}
As stated in the first part of the current section, the graphs $G_{\theta_{k_{\sigma}}}^{p^n}$ are isomorphic to the graphs $G_{\eta_{k_{\sigma}}}^{p^n}$. Hence we can study the former relying upon the structure of the latter ones.
\begin{example}
Let $p=53$. Then, the two roots in $\F_p$ of the equation
\begin{equation*}
x^2 + \frac{1}{2} x + \frac{1}{2} = 0
\end{equation*}
are 
\begin{equation*}
k_{\omega} = 7 \text{ and } k_{\overline{\omega}} = 19,
\end{equation*}
being
\begin{equation*}
\omega = 15 \text{ and } \overline{\omega} = 39.
\end{equation*}
In this example we aim at describing thoroughly the structure of the graphs $G_{\theta_7}^{53}$ and $G_{\theta_{19}}^{53}$. At first we notice that $|E(\F_{53})| = 64$ and that the representation of the Frobenius endomorphism $\pi_{53}$ as an element of $R$ is
\begin{equation*}
\pi_{53} = - 7 + 4 \alpha,
\end{equation*}
being $\alpha = \frac{1+\sqrt{-7}}{2}$.
\end{example}

We focus now on the graph $G_{\theta_7}^{53} \cong G_{\eta_7}^{53}$. We notice that $\alpha \equiv 15 \pmod{\pi_{53}}$. Therefore we define $\rho_0 = \alpha$ and study the iterations of the map $\eta_7$ on $A_1$ by means of the iterations of $[\rho_0]$ on $S \cong R/(\pi_{53}-1) R$. We have that
\begin{equation*}
S = R/ \rho_0^4 R \times R / \rho_1^2 R,
\end{equation*}
where $\rho_1 = \overline{\rho}_0$. Since $e_0 = 4$, the depth of the trees rooted in elements of $A_1$ is $4$. The $\eta_7$-periodic elements in $A_1$ are $x$-coordinates of the points $P = (0, P_1)$ in $S$. In $R / {\rho}_1^2 R$ there are $2$ points of additive order $4$, which give rise to a cycle of length $1$, there is $1$ point of additive order $2$, which gives rise to a cycle of length $1$, and one more point of order $1$, which gives rise to one more cycle of length $1$. 

We concentrate now on the iterations of $\eta_7$ on $B_1$. To do that we consider the iterations of $[\rho_0]$ on  $S \cong R/(\pi_{53}+1) R$. We have that
\begin{equation*}
S = \prod_{i=0}^2 R / \rho_i R,
\end{equation*}
where
\begin{itemize}
\item $\rho_1 = \overline{\alpha}$;
\item $\rho_2 = -3 + 2 \alpha$ (and $N(\rho_2) = 11$).
\end{itemize}
Since $e_0 = 1$, the depth of the trees rooted in the elements of $B_1$ is $1$. The $\eta_7$-periodic elements in $B_1$ are $x$-coordinates of the points $P = (0, P_1, P_2)$ in $S$ such that, if $P_1$ has additive order $2$ in $R / {\rho}_1 R$ (namely $P_1 \not = 0$), then $P_2 \not = 0$ by Theorem \ref{case3_thm_1_B}. In $S$ there are exactly $10$ points $P$ where $P_1 \not = 0$ and $P_2 \not = 0$. The smallest among the positive integers $s$ such that $\rho_1$ divides either $\rho_0^s + 1 $ or $\rho_0^s-1$ is $1$. Moreover, the smallest among the positive integers $s$ such that $\rho_2$ divides either $\rho_0^s + 1 $ or $\rho_0^s-1$ is $5$. We can conclude that the $10$ points we are considering give rise to $1$ cycle of length $5$. The remaining $\eta_7$-periodic elements are $x$-coordinates of the $10$ points $P=(0,0,P_2) \in S$, where $P_2 \not = 0$. Such points give rise to another cycle of length $5$. 

The graph $G_{\theta_7}^{53}$, isomorphic to $G_{\eta_7}^{53}$, is below represented. We notice that the vertex labels, different from $\infty$ and `0' (the zero in $\F_{53}$), refer to the exponents of the powers $\gamma^i$ for $0 \leq i \leq 51$, being $\gamma$ a root of the Conway polynomial $x-2 \in \F_{53} [x]$. 

The following $3$ connected components are due to the elements of $A_1$.

\begin{center}
    \unitlength=2.8pt
    \begin{picture}(125, 45)(5,-5)
    \gasset{Nw=4,Nh=4,Nmr=3,curvedepth=0}
    \thinlines
   \footnotesize
    
    \node(B1)(36,8){$\infty$}
    
    \node(C1)(36,16){`0'}
    
    \node(D1)(20,24){13}
    \node(D2)(52,24){39}
              
    \node(E1)(12,32){22}
   	\node(E2)(28,32){30}
    \node(E3)(44,32){4}
    \node(E4)(60,32){48}

	\node(F1)(8,40){2}
    \node(F2)(16,40){50}
    \node(F3)(24,40){3}          
    \node(F4)(32,40){49}
    \node(F5)(40,40){23}
    \node(F6)(48,40){29}
    \node(F7)(56,40){24}
    \node(F8)(64,40){28}
    
    \node(BB1)(84,8){37}
    \node(BB2)(116,8){11}
    
    \node(CC1)(84,16){15}
    \node(CC2)(116,16){41}
    
    \node(DD1)(84,24){0}
    \node(DD2)(116,24){26}
              
    \node(EE1)(76,32){8}
   	\node(EE2)(92,32){44}
    \node(EE3)(108,32){18}
    \node(EE4)(124,32){34}

	\node(FF1)(72,40){1}
    \node(FF2)(80,40){51}
    \node(FF3)(88,40){5}          
    \node(FF4)(96,40){47}
    \node(FF5)(104,40){21}
    \node(FF6)(112,40){31}
    \node(FF7)(120,40){25}
    \node(FF8)(128,40){27}

	\drawedge(F1,E1){}	
    \drawedge(F2,E1){}
    \drawedge(F3,E2){}
    \drawedge(F4,E2){}
    \drawedge(F5,E3){}
    \drawedge(F6,E3){}
    \drawedge(F7,E4){}
    \drawedge(F8,E4){}
    \drawedge(E1,D1){}
    \drawedge(E2,D1){}
    \drawedge(E3,D2){}
    \drawedge(E4,D2){}
    \drawedge(D1,C1){}
    \drawedge(D2,C1){}
    \drawedge(C1,B1){}
    \drawloop[loopangle=-90](B1){} 
    
    	\drawedge(FF1,EE1){}	
    \drawedge(FF2,EE1){}
    \drawedge(FF3,EE2){}
    \drawedge(FF4,EE2){}
    \drawedge(FF5,EE3){}
    \drawedge(FF6,EE3){}
    \drawedge(FF7,EE4){}
    \drawedge(FF8,EE4){}
    \drawedge(EE1,DD1){}
    \drawedge(EE2,DD1){}
    \drawedge(EE3,DD2){}
    \drawedge(EE4,DD2){}
    \drawedge(DD1,CC1){}
    \drawedge(DD2,CC2){}
    \drawedge(CC1,BB1){}
    \drawedge(CC2,BB2){}
    \drawloop[loopangle=-90](BB1){} 
    \drawloop[loopangle=-90](BB2){} 
    \end{picture}
  \end{center}
  
The following $2$ connected components are due to the elements of $B_1$.

\begin{center}
\begin{picture}(60, 45)(-5,-20)
	\unitlength=2.8pt
    \gasset{Nw=4,Nh=4,Nmr=3,curvedepth=0}
    \thinlines
   \footnotesize
    \node(N1)(0,10){12}
    \node(N2)(9.5,3.1){46}
    \node(N3)(5.9,-8.1){10}
    \node(N4)(-5.9,-8.1){45}
    \node(N5)(-9.5,3.1){17}
    \node(N6)(0,20){6}
    \node(N7)(19,6.2){42}
    \node(N8)(11.8,-16.2){7}
    \node(N9)(-11.8,-16.2){35}
    \node(N10)(-19,6.2){40}
    \drawedge(N2,N1){}
    \drawedge(N3,N2){}
    \drawedge(N4,N3){}
    \drawedge(N5,N4){}
    \drawedge(N1,N5){}
    \drawedge(N6,N1){}
    \drawedge(N7,N2){}
    \drawedge(N8,N3){}
    \drawedge(N9,N4){}
    \drawedge(N10,N5){}
    
    \node(N11)(60,10){19}
    \node(N12)(69.5,3.1){43}
    \node(N13)(65.9,-8.1){38}
    \node(N14)(54.1,-8.1){20}
    \node(N15)(50.5,3.1){36}
    \node(N16)(60,20){9}
    \node(N17)(79,6.2){14}
    \node(N18)(71.8,-16.2){32}
    \node(N19)(48.2,-16.2){16}
    \node(N20)(41,6.2){33}
    \drawedge(N12,N11){}
    \drawedge(N13,N12){}
    \drawedge(N14,N13){}
    \drawedge(N15,N14){}
    \drawedge(N11,N15){}
    \drawedge(N16,N11){}
    \drawedge(N17,N12){}
    \drawedge(N18,N13){}
    \drawedge(N19,N14){}
    \drawedge(N20,N15){}
\end{picture}
\end{center}
  
We focus now on the graph $G_{\theta_{19}}^{53} \cong G_{\eta_{19}}^{53}$. Since $\overline{\alpha} \equiv 39 \pmod{\pi_{53}}$, we define $\rho_0 = \overline{\alpha}$ and study the iterations of the map $\eta_{19}$ on $A_1$ by means of the iterations of $[\rho_0]$ on $S \cong R/(\pi_{53}-1) R$. We have that
\begin{equation*}
S = R/ \rho_0^2 R \times R / \rho_1^4 R,
\end{equation*}
where $\rho_1 = \overline{\rho}_0$. Since $e_0 = 2$, the depth of the trees rooted in the elements of $A_1$ is $2$. The $\eta_{19}$-periodic elements in $A_1$ are $x$-coordinates of the points $P = (0, P_1)$ in $S$. The additive order of $P_1$ (and $P$) in $R / {\rho}_1^4 R$ (in $S$) can be $16, 8, 4, 2$, or $1$. The $8$ points $P$ of additive order $16$ give rise to $1$ cycle of length $4$, since $4$ is the smallest among the positive integers $s$ such that $\rho_1$ divides either $\rho_0^s+1$ or $\rho_s-1$. In a similar way we get that in $G_{\eta_{19}}^{53}$ there is $1$ cycle of length $2$ due to the $4$ points of additive order $8$ and $3$ cycles of length $1$ each due respectively to the $2$ points of additive order $4$, to the only point of order $2$ and to the only point of order $1$ (namely the point $(0,0)$).  

We concentrate now on the iterations of $\eta_{19}$ on $B_1$. To do that we consider the iterations of $[\rho_0]$ on  $S \cong R/(\pi_{53}+1) R$. In this case we have that
\begin{equation*}
S = \prod_{i=0}^2 R / \rho_i R,
\end{equation*}
where
\begin{itemize}
\item $\rho_1 = \alpha$;
\item $\rho_2 = -3 + 2 \alpha$ (and $N(\rho_2) = 11$).
\end{itemize}
The dynamics of $[\rho_0]$ on $B_1$ can be described employing the same arguments used for the description of the dynamics of $[\rho_0]$ over $B_1$ in the graph $G_{\eta_{7}}^{53}$ (actually, the connected components formed by the elements of the current set $B_1$ are isomorphic to the connected components corresponding to the former set $B_1$). 

The graph $G_{\theta_{19}}^{53}$, isomorphic to $G_{\eta_{19}}^{53}$, is represented below. Firstly we represent the $5$ connected components due to the elements of $A_1$.
\begin{center}
	\begin{picture}(60, 65)(-5,-35)
    	\unitlength=2.8pt
    \gasset{Nw=4,Nh=4,Nmr=3,curvedepth=0}
    \thinlines
   \footnotesize
    \node(A1)(10,0){17}
    \node(A2)(0,10){16}
    \node(A3)(-10,0){43}          
    \node(A4)(0,-10){42}
     
    \drawedge(A1,A2){}
    \drawedge(A2,A3){}
    \drawedge(A3,A4){}
    \drawedge(A4,A1){}
     
    \node(B1)(20,0){10}
    \node(B2)(0,20){35}
    \node(B3)(-20,0){36}          
    \node(B4)(0,-20){9}
    
    \drawedge(B1,A1){}
    \drawedge(B2,A2){}
    \drawedge(B3,A3){}
    \drawedge(B4,A4){}
    
    \node(C1)(30,10){4}
    \node(C2)(10,30){46}
    \node(C3)(-10,30){6}          
    \node(C4)(-30,10){22}
    \node(C5)(-30,-10){30}
    \node(C6)(-10,-30){20}
    \node(C7)(10,-30){32}          
    \node(C8)(30,-10){48}
    
    \drawedge(C1,B1){}
    \drawedge(C8,B1){}
    \drawedge(C2,B2){}
    \drawedge(C3,B2){}
    \drawedge(C4,B3){}
    \drawedge(C5,B3){}
    \drawedge(C6,B4){}
    \drawedge(C7,B4){}

    \node(AA1)(60,10){33}   
    \node(AA2)(60,-10){7}
     
    \gasset{curvedepth=2}
     
    \drawedge(AA1,AA2){}
    \drawedge(AA2,AA1){}
 
 	\gasset{curvedepth=0}
     
    \node(BB1)(60,20){45}
    \node(BB2)(60,-20){19}          
    
    \drawedge(BB1,AA1){}
    \drawedge(BB2,AA2){}

    \node(CC1)(70,30){2}
    \node(CC2)(50,30){50}          
    \node(CC3)(50,-30){24}
    \node(CC4)(70,-30){28}          
        
    \drawedge(CC1,BB1){}
    \drawedge(CC2,BB1){}
    \drawedge(CC3,BB2){}
    \drawedge(CC4,BB2){}

    \end{picture}
\end{center}

\begin{center}
	\begin{picture}(60, 40)(5,-10)
    	\unitlength=2.8pt
    \gasset{Nw=4,Nh=4,Nmr=3,curvedepth=0}
    \thinlines
   \footnotesize
    \node(A1)(0,0){14}
    \node(A2)(0,10){38}
    \node(A3)(0,20){0}

    \drawedge(A2,A1){}
    \drawedge(A3,A2){}

    \node(B1)(20,0){40}
    \node(B2)(20,10){12}
    \node(B3)(20,20){26}          

 	\drawedge(B2,B1){}
    \drawedge(B3,B2){}
    
    \node(C1)(70,0){$\infty$}
    \node(C2)(70,10){`0'}
    \node(C3)(60,20){13}
    \node(C4)(80,20){39}           

    \drawedge(C2,C1){}
    \drawedge(C3,C2){}
    \drawedge(C4,C2){}

	\drawloop[loopangle=-90](A1){} 
 	\drawloop[loopangle=-90](B1){} 
    \drawloop[loopangle=-90](C1){}

    \end{picture}
\end{center}

Now we represent the $2$ connected components due to the elements of $B_1$.

\begin{center}
\begin{picture}(60, 45)(-5,-20)
	\unitlength=2.8pt
    \gasset{Nw=4,Nh=4,Nmr=3,curvedepth=0}
    \thinlines
   \footnotesize
    \node(N1)(0,10){1}
    \node(N2)(9.5,3.1){49}
    \node(N3)(5.9,-8.1){34}
    \node(N4)(-5.9,-8.1){41}
    \node(N5)(-9.5,3.1){31}
    \node(N6)(0,20){3}
    \node(N7)(19,6.2){18}
    \node(N8)(11.8,-16.2){11}
    \node(N9)(-11.8,-16.2){21}
    \node(N10)(-19,6.2){51}
    \drawedge(N2,N1){}
    \drawedge(N3,N2){}
    \drawedge(N4,N3){}
    \drawedge(N5,N4){}
    \drawedge(N1,N5){}
    \drawedge(N6,N1){}
    \drawedge(N7,N2){}
    \drawedge(N8,N3){}
    \drawedge(N9,N4){}
    \drawedge(N10,N5){}
    
    \node(N11)(60,10){5}
    \node(N12)(69.5,3.1){27}
    \node(N13)(65.9,-8.1){23}
    \node(N14)(54.1,-8.1){8}
    \node(N15)(50.5,3.1){15}
    \node(N16)(60,20){25}
    \node(N17)(79,6.2){29}
    \node(N18)(71.8,-16.2){44}
    \node(N19)(48.2,-16.2){37}
    \node(N20)(41,6.2){47}
    \drawedge(N12,N11){}
    \drawedge(N13,N12){}
    \drawedge(N14,N13){}
    \drawedge(N15,N14){}
    \drawedge(N11,N15){}
    \drawedge(N16,N11){}
    \drawedge(N17,N12){}
    \drawedge(N18,N13){}
    \drawedge(N19,N14){}
    \drawedge(N20,N15){}
\end{picture}
\end{center}

\bibliography{Refs}
\end{document}